\newcommand {\ud}{{\rm d}}
\newcommand {\B}{\mathcal{B}}
\definecolor{webgreen}{rgb}{0,.5,0}
\definecolor{webbrown}{rgb}{.8,0,0}
\definecolor{emphcolor}{rgb}{0.95,0.95,0.95}
\ifpdf \hypersetup{pdftex,
	pdfstartview=FitH, 
	bookmarksopen=true,
	bookmarksnumbered=true
} \else \hypersetup{dvips} \fi
\numberwithin{equation}{section}
\newtheorem{theorem}{Theorem}[section]
\newtheorem{proposition}{Proposition}[section]
\newtheorem{remark}{Remark}[section]
\newtheorem{lemma}{Lemma}[section]
\newtheorem{assump}{Assumption}[section]
\numberwithin{remark}{section} \numberwithin{proposition}{section}
\numberwithin{corollary}{section}
\newcommand {\gen}{\mathcal{L}}
\newcommand {\R}{\mathbb{R}}
\newcommand {\F}{\mathcal{F}}
\newcommand {\A}{\mathcal{A}}
\newcommand {\N}{\mathbb{N}}
\newcommand {\p}{\mathbb{P}}
\newcommand {\E}{\mathbb{E}}
\newcommand{\diff}{{\rm d}}
\newcommand{\lev}{L\'{e}vy }
\newcommand{\red}{\textcolor[rgb]{1.00,0.00,0.00}}
\newcommand{\mazenta}{\textcolor[rgb]{0.65,0.00,0.35}}
\newcommand{\e}{\mathbb{E}}
\newcommand{\eqdef}{\raisebox{0.4pt}{\ensuremath{:}}\hspace*{-1mm}=}
\newcommand{\bF}{\mathbf{F}}
\newcommand{\Prob}{\mathbb{P}}
\newcommand{\Probm}{\mathbf{P}}
\newcommand{\Em}{\mathbf{E}}
\newcommand{\DD}{\mathcal{D}}
\newcommand{\bb}{\mathbf{b}}
\title[Bailout periodic dividend problem for Markov additive processes]{On the bailout dividend problem with periodic dividend payments for spectrally negative Markov additive processes}
\author[D. Mata L\'opez]{Dante Mata}
\address[Dante Mata]{Department of Probability and Statistics, Centro de Investigaci\'{o}n en Matem\'{a}ticas A.C. Calle Jalisco s/n. C.P. 36240, Guanajuato, Mexico}
\email{dante.mata@cimat.mx}
\author[H. A. Moreno-Franco]{Harold A. Moreno-Franco}
\address[Harold A. Moreno-Franco]{Department of Statistics and Data Analysis, HSE University, Moscow, Russian Federation.}
\email{hmoreno@hse.ru}
\author[K. Noba]{Kei Noba}
\address[K. Noba]{School of Statistical Thinking, The Institute of Statistical Mathematics
10-3 Midori-cho, Tachikawa Tokyo 190-8562, Japan.}
\email{knoba@ism.ac.jp}
\author[J. L. P\'erez]{Jos\'e-Luis P\'erez}
\address[J. L. P\'erez]{Department of Probability and Statistics, Centro de Investigaci\'{o}n en Matem\'{a}ticas A.C. Calle Jalisco s/n. C.P. 36240, Guanajuato, Mexico}
\email{jluis.garmendia@cimat.mx}
\begin{document}
		\begin{abstract}
		This paper studies the bailout optimal dividend problem with regime switching under the constraint that dividend payments can be made only at the arrival times of an independent Poisson process while capital can be injected continuously in time. We show the optimality of the regime-modulated Parisian-classical reflection strategy when the underlying risk model follows a general spectrally negative Markov additive process. In order to verify the optimality, first we study an auxiliary problem driven by a single spectrally negative \lev process with a final payoff at an exponential terminal time and characterise the optimal dividend strategy. Then, we  use the dynamic programming principle to transform the global regime-switching problem into an equivalent local optimization problem with a final payoff up to the first regime switching time. The optimality of the regime modulated Parisian-classical barrier strategy can be proven by using the results from the auxiliary problem and approximations via recursive iterations.
\ \\
\ \\
\noindent \small{\textbf{Keywords:}
	regime switching; spectrally one-sided \lev processes; scale functions; periodic and singular control strategies.\\
\ \\
\noindent  \textbf{Mathematics Subject Classification}: 60G51, 93E20, 91G80}\\
	\end{abstract}
	\maketitle
	\section{Introduction} 
	In the bailout model of de Finetti's dividend problem, the goal is to find a joint optimal dividend and capital injection strategy in order to maximise the expected net present value (NPV) of dividend payments minus the capital injections. A spectrally negative L\'evy process, namely a L\'evy process with no positive jumps, has been used to model the surplus for an insurance company that has a diffusive behavior because of the premiums and jumps downwards by claim payments. In the seminal paper \cite{AvrPalPis}, Avram et  al. showed that it is optimal to inject capital by reflecting the surplus process from below at zero and pay dividends from above at a suitable chosen threshold.
	
	
 	As in \cite{AvrPalPis}, most of the existing continuous-time models assume that the dividends can be paid at all times and instantaneously (see, e.g., \cite{AvrPalPis,AM2015,BKY,JP2012}); but in reality, dividend-payout decisions can only be made at discrete times, for that reason, the modeling of optimal dividend-payout in discrete random times has recently drawn much attention; see, e.g., \cite{ABT,ACT,ATW,NobPerYamYan,NobPerYu,PerYam}. 
	
	With this in mind, in this work, we impose the constraint that dividend payments can only be made at discrete times given by the arrival times of a Poisson process, independent of the surplus process. In addition, the classical bailout restriction requires that capital must be injected continuously in time so that the controlled process remains non-negative uniformly in time. 
	
	In this paper, we consider the bailout dividend problem in a more general framework, where the underlying surplus is driven by a spectrally negative Markov additive process. This process can be seen as a family of spectrally negative \lev processes switching via an independent Markov chain. The regime-switching model is often used to capture the changes in market behavior due to macroeconomic transitions or macroeconomic readjustments, such as technological development, epidemics, and geopolitical issues. The continuous-time Markov chain is commonly used to approximate some stochastic factors that affect the underlying state processes. In addition, a negative jump is introduced each time there is a change in the current regime. This jump is independent of the family of \lev processes and the Markov chain and can be interpreted as the cost for the company to adapt to the new regime. The regime-switching model turns out to be attractive in financial applications as it provides tractable and explicit structures, and it has become a vibrant research topic in the past decades. Some recent work motivated by different financial applications can be found in \cite{BL2009,HSZ,KN2020,CourtoisSu2020}.
	
	Optimal dividend problems in the context with regime-switching have been studied mostly in the framework of jump diffusion models, 
	 see e.g. \cite{AM2015,J2015,J2019,JP2012}; however, recently Noba et al. \cite{NobPerYu} studied the more general spectrally negative \lev framework. Similar to the single regime work, these works have shown that optimal dividend strategies fit in the type of barrier strategies as well. These previous studies, assume that dividends can be paid continuously in time, so it becomes an open question whether a barrier dividend policy is optimal in the regime switching case under the constraint that dividend payments can only be made at the jump times of an independent Poisson process.
	
	This paper aims to provide a positive answer to the optimality of the periodic-classical barrier dividend strategy, namely the periodic dividend payment and classical capital injection but modulated by the regime states. Our approach on showing the optimality of barrier strategies relies on purely probabilistic methods and is based on fluctuation identities for spectrally negative \lev processes reflected at Poissonian times. The motivation behind periodic-classical barrier strategies arises from the works by Noba et al. \cite{NobPerYamYan} and P\'erez and Yamazaki \cite{PerYam}, where optimality was shown in the single regime context for spectrally negative and spectrally positive \lev processes, respectively. However, our analysis differs from \cite{NobPerYamYan} due to the complexity caused by different regimes. The verification of optimality of barrier strategies is expected to be much more involved than in \cite{NobPerYamYan} as the barrier in each regime is coupled with other regime modulated barriers through the definition of the value function. The HJB variational inequalities that arise for the global control problem become a system of coupled variational inequalities based on the regime states. In order to reduce complexity and deal with the switch in regimes, we borrow the idea of stochastic control to use the dynamic programming principle and localize the problem up to the period of the first regime switch, see \cite{NobPerYu} and \cite{YangZhu} for similar optimal dividend problems.
	
	Our verification of optimality can be summarised as follows:
	\begin{enumerate}
		\item First, we study an auxiliary bailout dividend problem with a terminal payoff until an independent exponential time driven by a single spectrally negative \lev process. In this part, we compute the expected NPV of dividends minus capital injections under a periodic-classical reflection strategy explicitly in terms of the scale function, and perform the ``guess and verify'' procedure common in the literature. It is noteworthy that we present a novel result, where we compute the resolvent density for the spectrally negative \lev process with periodic reflection above up to the first downcrossing time below 0, as well as the resolvent density for the spectrally negative \lev process with periodic reflection above and classical reflection below. The candidate optimal barrier is chosen using the conjecture that the slope of the value function at the barrier becomes one, 
		then we proceed to verify the optimality of the selected barrier strategy by showing that the candidate value function solves the proper variational inequalities.
		\item After studying the single spectrally negative \lev model, we define an iteration operator by proving the dynamic programming principle similar to \cite{NobPerYu} and \cite{YangZhu}. We can show the existence of the candidate optimal barriers modulated by the regime states using the results from step (1). Then we proceed to prove that both the value function and the expected NPV under the regime-modulated periodic-classical reflection strategy are solutions of a functional equation, and then we prove via iterative methods that the expected NPV of the candidate barrier strategy agrees with the value function. This completes the second step of the verification and the optimality of the barrier type control is successfully retained in the general framework as conjectured.
	\end{enumerate}

	The rest of this paper is structured as follows. Section \ref{sec_levy_def} introduces some mathematical preliminaries regarding spectrally negative \lev processes. In Section \ref{prob_map} we formulate the bailout dividend problem with regime switching in the spectrally negative Markov additive model, with periodic dividend decision times. The main result in this section confirms the optimality of the regime modulated periodic-classical reflection strategy. Section \ref{strategy} then formulates the auxiliary bailout dividend problem with Poissonian dividend decision times and with a final payoff at an independent exponential time. Our main result in this section gives the optimality of the periodic-classical reflection strategy. In Section \ref{sec_NPV_Scale} we compute the expected NPV of the periodic-classical reflection strategy in terms of the scale function; in addition, we present new results on fluctuation theory for spectrally negative \lev processes, namely the computation of resolvents for the process with periodic reflection above and for the process with periodic reflection above and classical reflection below. Sections \ref{sec_candidate_optimal} and \ref{verification_aux} give the construction and existence of the candidate optimal strategy and the rigorous verification of the optimality for the auxiliary problem, respectively. Finally, in Section \ref{sec_optimal_MAPS} we define an auxiliary iteration operator and provide the verification of optimality of the regime modulated barrier strategy via iterative arguments.  {Throughout the paper, the right hand derivative of a real function $f$ is denoted by $f'_+(x)$, whenever it exists.}
	\section{Preliminaries on spectrally negative \lev processes}\label{sec_levy_def} 
	Let us consider a spectrally negative L\'evy process $X=(X(t); t\geq 0)$  defined on a  probability space $(\Omega, \mathbf{F}, \p)$ {where $\mathbf{F}:=\{\mathcal{F}_t:t\geq0 \}$ denotes the right-continuous filtration generated by $X$}.  For $x\in \R$, we denote by $\p_x$ the law of $X$ when it starts at $x$ and write for convenience  $\p$ in place of $\p_0$. Accordingly, we shall write $\e_x$ and $\e$ for the associated expectation operators. 
	
	We denote by $\psi_{X}:[0,\infty)\rightarrow\R$  to the Laplace exponent of the process $X$, i.e. 
		\[
		\e\big[{\rm e}^{\theta X(t)}\big]=:{\rm e}^{\psi_X(\theta)t}, \qquad t, \theta\ge 0,
		\]
		given by the \emph{L\'evy-Khintchine formula} 
		\begin{equation}\label{lk}
			\psi_X(\theta) :=\gamma\theta+\dfrac{\eta^2}{2}\theta^2+\int_{(-\infty,0)}\big({\rm e}^{\theta z}-1-\theta z \mathbf{1}_{\{z >-1\}}\big)\Pi(\ud z), \quad \theta \geq 0.
		\end{equation}
		Here, $\gamma\in \R$, $\eta \ge 0$, and $\Pi$ is the \lev measure of $X$ defined on $(-\infty,0)$ which satisfies
		\begin{equation*}
		\int_{(-\infty,0)}(1\land z^2)\Pi(\ud z)<\infty.
		\end{equation*}	
		It is well known that  $X$ has paths of bounded variation if and only if $\eta=0$ and $\int_{(-1, 0)} |z|\Pi(\mathrm{d}z)$ is finite. In this case $X$ can be written as
		\begin{equation}
			X(t)=ct-S(t), \,\,\qquad t\geq 0,\notag
		\end{equation}
		where 
		\begin{align*}
			c:=\gamma-\int_{(-1,0)} z\Pi(\mathrm{d}z) 
		\end{align*}
		and $(S(t); t\geq0)$ is a driftless subordinator. {We assume that the process $X$ does not have monotone paths, and therefore we must have  $c>0$ and we can write}
		\begin{equation*}
			\psi_X(\theta) = c \theta+\int_{(-\infty,0)}\big( {\rm e}^{\theta z}-1\big)\Pi(\ud z), \quad \theta \geq 0. 
		\end{equation*}
	\subsection{Scale functions}\label{App_Scale}
	For fixed $q \geq 0$, let $W^{(q)}: \R \to [0 , \infty)$ be the scale function of the spectrally negative 
	L\'evy process $X$. 
	This takes the value zero on the 
	negative half-line, and on the positive half-line it is a continuous and strictly 
	increasing function  defined by its Laplace transform: 
	\begin{align}\label{scale_function_laplace}
		\int_0^\infty e^{-\theta x} W^{(q)} (x) \diff x 
		= \dfrac{1}{\psi_X (\theta) -q} , ~~\theta > \Phi (q), 
	\end{align}
	where $\psi_X$ is as in \eqref{lk} and
	\begin{align}
		\begin{split}
			\Phi(q) := \sup \{ \lambda \geq 0: \psi_X(\lambda) = q\} . 
		\end{split}
		\label{def_varphi}
	\end{align}
	We also define, for all $x \in \R$, 
	\begin{align*}
		&{\overline{W}}^{(q)} (x) := \int_0^x W^{(q)} (y) \diff y,  ~~~
		{\overline{\overline{W}}}^{(q)} (x) : = \int_0^x \int_0^z W^{(q)} (y)\diff y \diff z, \\
		&Z^{(q)} (x) := 1+ q\overline{W}^{(q)}(x), ~~~
		\overline{Z}^{(q)} (x) := \int_0^x Z^{(q)} (z) \diff z=x +q\overline{\overline{W}}^{(q)} (x) . 
	\end{align*}
	Because $W^{(q)}(x) = 0$ for $-\infty < x < 0$, we have
	\begin{align*}
		\overline{W}^{(q)}(x) = 0,\quad \overline{\overline{W}}^{(q)}(x) = 0,\quad Z^{(q)}(x) = 1,
		\quad \textrm{and} \quad \overline{Z}^{(q)}(x) = x, \quad x \leq 0.  
	\end{align*}
	
	\begin{remark} \label{remark_scale_function_properties}
		\begin{enumerate}
			\item[(1)] $W^{(q)}$ is differentiable a.e.. 
			In particular, if $X$ is of unbounded variation or the \lev measure is atomless, it is known that $W^{(q)}$ is $\mathcal{C}^1(\R \backslash \{0\})$; see, e.g.,\ \cite[Theorem 3]{Chan2011}.
			\item[(2)] 
			As in Lemma 3.1 of \cite{KKR},
			\begin{align*} 
			\begin{split}
			{W^{(q)}} (0) &= \left\{ \begin{array}{ll} 0 & \textrm{if $X$ is of unbounded
				variation,} \\ \dfrac 1 {c} & \textrm{if $X$ is of bounded variation,}
			\end{array} \right. 
			\end{split}
			\end{align*}
		\end{enumerate}
	\end{remark}


From the identity (6) in \cite{lrz}, 
\begin{align}\label{eq1}
\begin{split}
W^{(q+r)}(x)-W^{(q)}(x)&=r\int_0^xW^{(q+r)}(u)W^{(q)}(x-u) \diff u,\\
Z^{(q+r)}(x)-Z^{(q)}(x)&=r\int_0^xW^{(q+r)}(u)Z^{(q)}(x-u) \diff u,  
\end{split}
\qquad x \in \R.
\end{align}

We also define, for $q, r \in(0,\infty)$ and $x \in \R$,
\begin{align}\label{def_z_nuevo}
Z^{(q)}(x,\Phi(q+r)) &:=e^{\Phi(q+r) x} \left( 1 -r \int_0^{x} e^{-\Phi(q+r) z} W^{(q)}(z) \diff z	\right)  \\
&=r \int_0^{\infty} e^{-\Phi(q+r) z} W^{(q)}(z+x) \diff z	 > 0,\notag 
\end{align}
where the second equality holds due to \eqref{scale_function_laplace}.\\
By differentiating \eqref{def_z_nuevo} with respect to the first argument,
\begin{align}\label{eq5}
Z^{(q) \prime}(x,\Phi(q+r)) &:= \dfrac \partial {\partial x}Z^{(q)}(x,\Phi(q+r))  = \Phi(q+r) Z^{(q)}(x,\Phi(q+r))	- r W^{(q)}(x), \quad x > 0. 
\end{align}

Finally, for $b\geq 0$ and $x \in \R$,  we define
\begin{align}
\begin{split}
	W_{b}^{(q, r)} (x ) 
	&:= W^{(q)}(x) + r \int_b^{x } W^{(q + r)} (x-y)W^{(q)} (y) \diff y,\\
	Z_{b}^{(q, r)} (x ) 
	&:= Z^{(q)}(x) + r \int_b^{x } W^{(q + r)} (x-y) Z^{(q)} (y) \diff y, 
	\\
	\overline{Z}_{b}^{(q, r)} (x ) 
	&:= \overline{Z}^{(q)}(x) + r \int_b^{x } W^{(q + r)} (x-y) \overline{Z}^{(q)} (y) \diff y. 
\end{split}
\label{identities} 
\end{align}
Notice that the identities in \eqref{identities} reduce to
\begin{equation}\label{Rem_identities_simplify}
W_{b}^{(q, r)} (x ) =  W^{(q)}(x),\quad Z_{b}^{(q, r)} (x ) = Z^{(q)}(x), \quad \overline{Z}_{b}^{(q, r)} (x ) = \overline{Z}^{(q)}(x),\quad\text{when}\ x \in [0,b]. 
\end{equation}
 In addition, for $x \in \R$ we have
\begin{equation}\label{Rem_identities_simplify.0}
W_{0}^{(q, r)} (x ) = W^{(q+r)}(x), \quad Z_{0}^{(q, r)} (x ) = Z^{(q+r)}(x).
\end{equation}
For a comprehensive study on the scale functions and their application, see \cite{KKR,Kyp}.\\
 {Finally}, let us introduce the following notation that will be used throughout this paper. For  {$x,b\in[0,\infty)$, and a measurable function $h:\R\mapsto\R$, we define}
\begin{align}
\rho_b^{(q)}(x;h)&:=\int_0^{b}W^{(q)}(x-y)h(y)dy\label{fun_rho},\\
\rho_{b}^{(q, r)} (x ;h) &:= \rho^{(q)}_{b}(x;h) + r \int_b^{x } W^{(q + r)} (x-y) \rho^{(q)}_{b} (y;h) \diff y,\label{fun_rho_0},\\
\Xi^{(q,r)}(b;h)& :=\displaystyle \int_0^{\infty}h(y+b)e^{-\Phi(q+r)y}\diff y+r\int_b^{\infty}e^{-\Phi(q+r)(y-b)}\rho^{(q)}_{b}(y;h)\diff y.\label{e4}
\end{align}
 
\section{{The bailout optimal dividend problem with periodic dividend payments and regime switching}}\label{prob_map}

	We formulate the dividend problem  {when the surplus is driven by a Markov additive process (MAP) with negative jumps}, and present our main result that states the optimality of barrier strategies.

	\subsection{Spectrally negative Markov additive processes}
	Let us consider a bivariate process $(X,H) = \lbrace (X(t),H(t)); t\geq 0 \rbrace$, where the component $H$ is a continuous-time Markov chain with finite state space $E = \lbrace 1,\cdots, N \rbrace$ and generator matrix $Q=(\lambda_{ij})_{i,j\in E}$. When the chain $H$ is in state $i$, $X$ behaves as a \textit{spectrally negative} L\'evy process $X^i$. In addition, when then process $H$ changes to a state $j \neq i$, the process $X$ jumps according to a non-positive random variable $J_{ij}$ with $i,j \in E$. The components $(X^i)_{i \in \E}, H, \text{ and } (J_{ij})_{i,j \in E}$ are assumed to be independent and are defined on some filtered probability space $(\Omega, \F, \bF, \Prob)$, where $\bF:=(\F_t)_{t \geq 0}$ is the right-continuous complete filtration generated by the processes $(X,H)$ and the family of random variables $(J_{ij})_{i,j \in E}$. We denote by $\Probm_{(x,i)}$ the law of the process conditioned on the event $\lbrace X(0) = x, H(0) = i \rbrace$; likewise we denote by $\Em_{(x,i)}$ the associated expectation operator.  
	
	Throughout this work we assume that for each $i \in E$, the Laplace exponent of the \lev process $X^i$, $\psi_{X^i}: [0,\infty) \rightarrow \R$, is given by the \textit{\lev-Khintchine} formula
	\[
	\psi_{X^i}(\theta) =\gamma_i \theta+\dfrac{\eta_i^2}{2}\theta^2+\int_{(-\infty,0)}\big({\rm e}^{\theta z}-1-\theta z \mathbf{1}_{\{z >-1\}}\big)\Pi(i,\ud z), \quad \theta \geq 0,
	\]
	where $\gamma_i \in \R, \, \eta_i \geq 0$, and $\Pi(i,\cdot)$ is the \lev measure of $X^i$ on $(-\infty,0)$ that satisfies $\int_{(-\infty,0)} (1 \wedge x^2) \Pi(i, \diff x) < \infty $. In addition, as in Section \ref{sec_levy_def}, if $X^i$ has paths of bounded variation its Laplace exponent is given by $\psi_{X^i}(\theta) = c_i \theta + \int_{(-\infty,0)} (e^{\theta z} - 1) \Pi(i, \diff z), \, \theta \geq 0 $, where $c_i := \gamma_i - \int_{(-1,0)} z \Pi(i, \diff z)$.
	
	 Throughout this work we denote by $\Prob_x^i$ the law of the \lev process $X^i$ contidioned on the event $\lbrace X^i(0) = x \rbrace$ and by $\E_x^i$ its associated expectation operator.
	\subsection{Bailout optimal dividend problem with Poissonian decision times and regime switching.}\label{b1}
	A strategy is a pair of non-decreasing, right-continuous, and adapted processes $\pi := (L_r^{\pi}(t),R_r^{\pi}(t))$ consisting of the cumulative amount of dividends $L_r^{\pi}$ and those of capital injection $R_r^{\pi}$. 
	
	Throughout  this paper we will consider  that the dividend payments can only be made at the arrival times $\mathcal{T}_r :=(T(i); i\geq 1 )$ of a Poisson process $N^r=( N^r(t); t\geq 0) $ with intensity $r>0$, which is defined on  $(\Omega, \F, \mathbb{F}, \Prob)$, where $\mathbb{F}=\{\mathcal{H}_{t}\}_{t\geq0}$ is the right-continuous complete filtration generated by $(X,N^r)$. We assume that $X$ and $N^r$ are independent on the previous probability space. In other words, we consider that $L_r^{\pi}$ admits the form
	 \begin{align}
		L_r^{\pi}(t)=\int_{[0,t]}\nu^{\pi}(s)\diff N^r(s),\qquad\text{$t\geq0$,} \label{restriction_poisson}
	\end{align}
	for some c\`agl\`ad process $\nu^{\pi}$ adapted  {to the filtration generated by $X$, $H$ and $N^r$}. 

	 The process $R_r^{\pi}$ is   non-decreasing, right-continuous, and $\bF$-adapted, with $R_r^{\pi}(0-) = 0$. Contrary to the dividend payments, capital injection can be made continuously in time. In addition, the process $R_r^{\pi}$ must satisfy
	\begin{equation}\label{bailout_admissible}
	\Em_{(x,i)}\left[ \int_{[0,\infty)} e^{-I(t)} \diff R_r^{\pi}(t) \right] < \infty, \quad x \geq 0, i \in E,
	\end{equation}
	where $I(t):= \int_0^t q(H(s)) \diff s,$ and $q: E \rightarrow \R_+$ represents the Markov-modulated rate of discounting.
	The corresponding  {controlled} process associated to the strategy $\pi$ is given by $U_r^{\pi}(0-) = X(0)$ and 
	\begin{align*}
		U_r^{\pi}(t) := X(t) - L_r^{\pi}(t) + R_r^{\pi}(t), \quad t \geq 0.
	\end{align*}
	 We denote by $\A$ the set of strategies satisfying the constraints mentioned above and  {that} $U_r^{\pi}(t) \geq 0$ for all $t \geq 0$ a.s.. We call a strategy $\pi$ \textit{admissible} if $\pi \in \A$.
	 
	We consider that $\beta > 1$ is the constant cost per unit of capital injected in all regimes. Our aim is to maximize the expected net present value (NPV)
	\begin{equation}\label{va1}
	V_{\pi}(x,i) := \Em_{(x,i)} \left[ \int_{[0, \infty)} e^{-I(t)} \diff L_r^{\pi}(t) - \beta \int_{[0, \infty)} e^{-I(t)} \diff R_r^{\pi}(t) \right], \quad x \geq 0, i \in E, 
	\end{equation}
	over all $\pi \in \A$. Hence, our goal is to find the value function of the problem
	\begin{equation}\label{MAP_Value}
		V(x,i) := \sup_{\pi \in \A} V_{\pi}(x,i), \quad x \geq 0,
	\end{equation}
and obtain an optimal strategy $\pi^* \in \A$   whose expected NPV, $V_{\pi^{*}}$, agrees with $V$ if such a strategy exists.

Throughout this paper we assume the following.

\begin{assump}\label{assump_1} We assume that $\E[X^i(1)] =   {\psi_{X^i}'(0+)} > -\infty$ for $i\in E$.
\end{assump}
\begin{assump}\label{assump_2} For all $i,j \in E$ with $i \neq j$, we assume that $\max_{i,j \in E} \E[|J_{ij}|] < \infty$.
\end{assump}
We claim that the dynamic programming principle for the value function of the control problem holds valid, which will play a key role in the verification via iteration operators later on (see Section \ref{sec_optimal_MAPS}). We defer its  proof to the Appendix (see Subsection \ref{App_DPP_Proof}). 
\begin{proposition} 
	\label{dpp}  
	For $x \in \R$ and $i \in E$, we have 
	\begin{equation}\label{MAP_DPP}
		V(x,i) = \sup_{\pi \in \A} \Em_{(x,i)} \left[ \int_{[0, \zeta)} e^{-I(t)} \diff L_r^{\pi}(t) - \beta \int_{[0, \zeta)} e^{-I(t)} \diff R_r^{\pi}(t) + e^{-I(\zeta)} V\mazenta{\hat{V}?}(U_r^{\pi}(\zeta), H(\zeta)) \right],
	\end{equation}
 	 {where $\zeta$ denotes the epoch of the first regime switch.}
\end{proposition}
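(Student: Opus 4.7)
The plan is to establish the two opposing inequalities separately, which is the standard route for proving a dynamic programming principle of this type. Throughout, write $\mathcal{J}(\pi;[s,t)) := \int_{[s,t)} e^{-I(u)} \diff L_r^{\pi}(u) - \beta \int_{[s,t)} e^{-I(u)} \diff R_r^{\pi}(u)$, so that $V_{\pi}(x,i) = \Em_{(x,i)}\bigl[\mathcal{J}(\pi;[0,\zeta)) + \mathcal{J}(\pi;[\zeta,\infty))\bigr]$. The key probabilistic fact is that $\zeta$, the first jump time of $H$, is an $\bF$-stopping time with $\zeta \mid H(0){=}i \sim \mathrm{Exp}(-\lambda_{ii})$, independent of $X$ and of the Poisson clock $N^r$.

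For the easy inequality $V(x,i) \leq \text{RHS}$, fix $\pi \in \A$. By the strong Markov property of $(X,H,N^r)$ at $\zeta$ --- using that $N^r$, being an independent Poisson process, restarts at $\zeta$ in the sense of its increments --- the time-shifted strategy induced by $\pi$ on $[\zeta,\infty)$ is admissible for the control problem started from $(U_r^{\pi}(\zeta), H(\zeta))$. Conditioning on $\F_{\zeta}$ and pulling out the discount factor $e^{-I(\zeta)}$ therefore yields
\[
\Em_{(x,i)}\!\left[\mathcal{J}(\pi;[\zeta,\infty)) \,\middle|\, \F_{\zeta}\right] \leq e^{-I(\zeta)}\, V(U_r^{\pi}(\zeta), H(\zeta)),
\]
and taking expectations followed by the supremum over $\pi \in \A$ proves $V(x,i) \leq \text{RHS}$.

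For the reverse inequality, I would fix $\varepsilon > 0$ and invoke a measurable selection argument (Kuratowski--Ryll-Nardzewski or the Jankov--von Neumann selection theorem, applied to the graph of the $\varepsilon$-argmax correspondence) to produce a family $\{\pi^{y,j}\}_{(y,j)\in[0,\infty)\times E}\subset\A$ satisfying $V_{\pi^{y,j}}(y,j) \geq V(y,j) - \varepsilon$, with $(y,j)\mapsto\pi^{y,j}$ jointly measurable. Given any $\bar\pi\in\A$, one then builds a concatenated strategy $\pi^{*}$ that coincides with $\bar\pi$ on $[0,\zeta)$ and, after $\zeta$, acts as $\pi^{U_r^{\bar\pi}(\zeta),H(\zeta)}$ applied to the post-$\zeta$ shift of the driving process. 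Assuming admissibility of $\pi^{*}$, the strong Markov property at $\zeta$ gives
\[
V(x,i) \geq V_{\pi^{*}}(x,i) \geq \Em_{(x,i)}\!\left[\mathcal{J}(\bar\pi;[0,\zeta)) + e^{-I(\zeta)} V(U_r^{\bar\pi}(\zeta),H(\zeta))\right] - \varepsilon,
\]
and taking the supremum over $\bar\pi\in\A$ followed by $\varepsilon \downarrow 0$ delivers the reverse inequality.

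The main obstacle is the admissibility verification for the concatenated $\pi^{*}$. Three items must be checked: the Poissonian restriction \eqref{restriction_poisson} is preserved across the gluing at $\zeta$ (this relies crucially on the strong Markov property of $N^r$ at $\zeta$, hence on the independence of $N^r$ from $(X,H)$); the non-negativity $U_r^{\pi^{*}}(t) \geq 0$ is inherited from the admissibility of $\bar\pi$ and of $\pi^{U_r^{\bar\pi}(\zeta),H(\zeta)}$; and the integrability \eqref{bailout_admissible} is preserved. For the last point one needs an a priori bound on $V$, obtained by noting that any admissible strategy yields an expected NPV dominated by $\Em_{(x,i)}\bigl[\int_{[0,\infty)} e^{-I(t)} \diff L_r^\pi(t)\bigr]$, which in turn can be controlled using Assumptions \ref{assump_1}--\ref{assump_2} (guaranteeing finite means for $X^i(1)$ and for the regime-switch jumps $J_{ij}$) together with a standard martingale argument to produce a linear-growth majorant of $V$ in $x$.
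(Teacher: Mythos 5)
Your proof follows the same skeleton as the paper's (Appendix \ref{App_DPP_Proof}): the inequality $V \leq \mathrm{RHS}$ by conditioning at $\zeta$ and using the strong Markov property together with the memorylessness of $N^r$, and the reverse inequality by concatenating an arbitrary $\bar\pi$ on $[0,\zeta)$ with $\varepsilon$-optimal continuation strategies after $\zeta$. The one genuine difference is how the $\varepsilon$-optimal continuations are selected measurably in the (random) post-switch state $(U_r^{\bar\pi}(\zeta),H(\zeta))$. You invoke an abstract measurable selection theorem applied to the $\varepsilon$-argmax correspondence; this is the weakest point of your argument, since making it rigorous requires endowing the space of admissible strategies with a standard Borel structure and verifying analyticity of the relevant graph, none of which you address. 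The paper avoids this machinery entirely: it first proves that $V(\cdot,i)$ is $\beta$-Lipschitz (Lemma \ref{lem_lipschitz}, exploiting the fact that capital injection lets one shift the initial surplus at cost $\beta$ per unit), deduces a uniform $\varepsilon$-optimal strategy on each compact interval via a finite partition (Lemma \ref{lem_unif_bound}), and then glues only \emph{countably many} strategies $\pi^{k,\varepsilon}$ according to which interval $[k,k+1)$ contains $U_r^{\bar\pi}(\zeta)$ — so measurability of the selection is automatic. Your approach is more general but heavier and left as a black box; the paper's is elementary and tailored to the bailout structure. Your remarks on admissibility of the concatenated strategy and on the a priori linear-growth bound for $V$ are sound and consistent with what the paper does (the latter appearing in Appendix \ref{proof_lemma_v}), though for the integrability condition \eqref{bailout_admissible} one should say explicitly that it holds because each piece of the concatenation satisfies it and the discount factor at $\zeta$ is bounded by one.
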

\subsection{Markov-modulated periodic-classical barrier strategies}
For our candidate optimal control, we will consider the Markov-modulated reflection strategy, say $\pi^{0,\bb} = (L_r^{0,\bb}(t),R_r^{0,\bb}(t); t \geq 0)$, at a suitable reflection threshold $\bb = (b(i))_{i \in E}$. Namely, dividends are paid as a lump sum whenever the surplus process is above at $b (H(T(i)))$, where $T(i) \in \mathcal{T}_r$ is the $i$-th arrival time of the Poisson process $N^r$, while it is pushed upward by capital injection whenever it attempts to down cross zero. The resulting surplus process becomes the spectrally negative MAP with periodic and classical reflection, denoted by $ U_r^{0,\bb}(t):= X(t) - L_r^{0,\bb}(t) + R_r^{0,\bb}(t), t\geq 0$. We can describe explicitly the cumulative dividend payments associated to the Markov-modulated barrier strategy as
\[
L_r^{0,\bb}(t) = \sum_{\substack{T(i) \in \mathcal{T}_r \\ T(i)  \leq t} } (U_r^{0,\bb}(T(i)-) - b(H(T(i)))) \vee 0, \quad t \geq 0.
\]
 {By a modification of Remark 3.5 in \cite{NobPerYu}, it follows that the Markov-modulated barrier strategy $\pi^{0,\bb}$ is indeed admissible.}

We state the main result of our paper, and its proof will be provided by an iterative construction of the value function $V$ in Section \ref{sec_optimal_MAPS}.
\begin{theorem}\label{thm_optimal_barrier}
Under Assumptions \ref{assump_1} and \ref{assump_2}, there exists $\bb^* = (b^*(i))_{i\in E}$ such that the Markov-modulated reflection strategy with Poissonian decision times, $\pi^{0,\bb^*}$, is optimal and the value function of the problem \eqref{MAP_Value} is given by
\[
V(x,i) = V_{\pi^{0,\bb^*}}(x,i), \quad \text{ for } x\geq 0, i \in E.
\]
\end{theorem}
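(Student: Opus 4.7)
The approach combines the dynamic programming principle of Proposition~\ref{dpp} with the optimality result for the auxiliary single-regime problem (Sections~\ref{strategy}--\ref{verification_aux}), and then iterates. On $[0,\zeta)$ the chain $H$ stays in state $i$, $X$ coincides with $X^{i}$, and $\zeta$ is exponential with rate $\alpha_{i}:=-\lambda_{ii}$ and independent of $(X^{i},N^{r})$. For any measurable candidate $f:[0,\infty)\times E\to\R$, taking conditional expectations of the post-switch payoff in \eqref{MAP_DPP} reduces the supremum to an instance of the auxiliary problem driven by $X^{i}$, with discount $q(i)$, Poissonian dividend rate $r$, exponential horizon of rate $\alpha_{i}$, and terminal payoff
\[
g_{f}(y,i) := \sum_{j\neq i}\frac{\lambda_{ij}}{\alpha_{i}}\,\E\Bigl[\,f\bigl((y+J_{ij})\vee 0,\,j\bigr) - \beta\max\bigl(-(y+J_{ij}),\,0\bigr)\,\Bigr],
\]
the second summand accounting for the instantaneous bailout at time $\zeta$ when $J_{ij}$ sends $X$ below zero. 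Define $(\mathcal{T}f)(x,i)$ as the value of this auxiliary problem; Proposition~\ref{dpp} is then exactly the fixed-point identity $V=\mathcal{T}V$. By the verification theorem for the auxiliary problem, provided $g_{f}(\cdot,i)$ satisfies the structural properties used there (non-decreasing, correct growth, sufficient regularity), there exists $b_{f}^{*}(i)\ge 0$ such that $(\mathcal{T}f)(\cdot,i)$ equals the expected NPV of the periodic--classical barrier strategy at level $b_{f}^{*}(i)$ in regime $i$.

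Next I would construct $\bb^{*}$ by iteration. Set $V_{0}\equiv 0$, $V_{n+1}:=\mathcal{T}V_{n}$, and $\bb_{n}^{*}:=(b_{V_{n}}^{*}(i))_{i\in E}$. Monotonicity of $\mathcal{T}$ in its argument, together with an a priori upper bound derived from Assumptions~\ref{assump_1}--\ref{assump_2} (which control the expected discounted bailout and the regime-switch jump cost $\max_{i,j}\E[|J_{ij}|]$), shows that $(V_{n})$ is monotone increasing and uniformly bounded; it therefore converges pointwise to a fixed point $V_{\infty}=\mathcal{T}V_{\infty}$. Passing to the limit inside the scale-function representations of Section~\ref{sec_NPV_Scale} yields $\bb_{n}^{*}\to\bb^{*}$ with $b_{V_{\infty}}^{*}(i)=b^{*}(i)$ for each $i$.

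To conclude I would identify $V_{\infty}=V=V_{\pi^{0,\bb^{*}}}$. A direct application of Proposition~\ref{dpp} to the strategy $\pi^{0,\bb^{*}}$, which regenerates after every regime switch into a periodic--classical barrier strategy with the same regime-modulated levels, gives $V_{\pi^{0,\bb^{*}}}=\mathcal{T}V_{\pi^{0,\bb^{*}}}$; uniqueness of the fixed point in the relevant class forces $V_{\pi^{0,\bb^{*}}}=V_{\infty}$. The inequality $V\ge V_{\pi^{0,\bb^{*}}}$ is immediate from admissibility of $\pi^{0,\bb^{*}}$, while the reverse bound $V\le V_{\infty}$ follows from $V=\mathcal{T}V$, monotonicity of $\mathcal{T}$, and the fact that after $n$ iterations the error contracts geometrically thanks to the discounting across regime-switch epochs; combining the two inequalities gives $V=V_{\infty}=V_{\pi^{0,\bb^{*}}}$, which is the claim.

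The main obstacle I expect is the closure step for the iteration: verifying that the qualitative properties (monotonicity, regularity, correct slope at $0$, appropriate growth at $\infty$) demanded by the auxiliary verification theorem are preserved by $\mathcal{T}$, so that every iterate $V_{n}$ lies in the class on which the construction of $b_{V_{n}}^{*}(i)$ is valid and the scale-function machinery applies. A secondary but equally delicate point is the interchange of the limit $n\to\infty$ with the dividend and capital-injection integrals when identifying $V_{\infty}$ with the expected NPV of the strategy $\pi^{0,\bb^{*}}$: this requires uniform domination of $\beta\int_{[0,\infty)}e^{-I(t)}\diff R_{r}^{\pi^{0,\bb_{n}^{*}}}(t)$, which is precisely where the finite first moment $\psi_{X^{i}}'(0+)>-\infty$ from Assumption~\ref{assump_1} combines with the integrability $\max_{i,j\in E}\E[|J_{ij}|]<\infty$ from Assumption~\ref{assump_2}.
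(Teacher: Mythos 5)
Your overall scheme — dynamic programming reduction to the auxiliary single-regime problem, closure of a structured class of terminal payoffs under the one-step-supremum operator, and a fixed-point/iteration argument to identify $V$ with $V_{\pi^{0,\bb^*}}$ — is the same as the paper's. You also correctly flag the two delicate points (closure of the class, and uniform domination of the bailout costs), which the paper handles via Proposition~\ref{gammainD}/Remark~\ref{Remark_Optimal_Barrier} and Lemma~\ref{inf_norm_finite}. However, the specific iteration you set up has a real gap. You initialize at $V_0\equiv 0$ and simultaneously appeal to monotone convergence and to a geometric contraction "thanks to the discounting across regime-switch epochs." Neither leg is supported as stated: $(V_n)$ increasing would require $0\le \mathcal{T}0$, which you do not check and is not obvious here because of the bailout cost; and the contraction established in the paper (Lemma~\ref{norm_contraction}) is in the sup norm $\|\cdot\|_\infty$, in which $\|V-0\|_\infty=\infty$ since $V$ grows linearly, so contraction does not yield convergence from the starting point $0$. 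The paper avoids both issues: Lemma~\ref{proof_lemma_vv} produces non-constant functions $V_-,V_+\in\DD$ with $V_-\le V\le V_+$ and $\|V_+-V_-\|_\infty<\infty$ (both grow linearly with the same slope), and Proposition~\ref{Prop_Supremum_FixedPoint} then uses the contraction on the finite-width sandwich to conclude $V\in\DD$. Once $V\in\DD$, the paper defines $\bb^*$ \emph{directly} as the maximizer for $\Gamma V$, so that $V=\Gamma V=T_{\bb^*}V$, and compares with $V_{\pi^{0,\bb^*}}=T_{\bb^*}V_{\pi^{0,\bb^*}}$ (Proposition~\ref{Prop_FixedPoint}) via $\|V-V_{\pi^{0,\bb^*}}\|_\infty\le K^n\|V-V_{\pi^{0,\bb^*}}\|_\infty$ and Lemma~\ref{inf_norm_finite}; there is no need to take a limit of barriers $\bb_n^*\to\bb^*$, which in your version would require an additional continuity argument for the barrier map that you do not supply.
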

	\section{
		{Optimal strategies for an auxiliary Poissonian bail-out dividend problem with an exponential terminal time}}\label{strategy} 
	
	{In this section we introduce a Poissonian bail-out dividend problem with an exponential terminal time, in a model with a single spectrally negative L\'evy process, which is closely related with the problem mentioned in Section \ref{prob_map}, due to Proposition \ref{dpp}. To introduce the problem, let us first assume that the uncontrolled process is given by a spectrally negative \lev process $X$ with Laplace exponent, denoted by $\psi_X$, as in Section \ref{sec_levy_def}.} 
	
	{As in Section \ref{b1}, we consider that the dividend payments can only be made at the arrival times $\mathcal{T}_r :=(T(i); i\geq 1 )$ of a Poisson process $N^r=( N^r(t); t\geq 0) $ with intensity $r>0$, which is defined on  $(\Omega, \F, \mathbb{F}, \Prob)$, where $\mathbb{F}=\{\mathcal{H}_{t}\}_{t\geq0}$ is the right-continuous complete filtration generated by $(X,N^r)$. We assume that the processes $X$ and $N^r$ are independent.}
	
	We consider strategies $\pi := \left( L^{\pi}_{r}(t), R^{\pi}_{r}(t); t \geq 0 \right)\in\mathcal{A}$ where $L^{\pi}_{r}$ admits the form $\int_{[0,t]}v^{\pi}(s)\diff N^r(s)$, $t\geq0$, and $v^{\pi}$ is a c\`agl\`ad process adapted to the filtration $\mathbb{F}$. On the other hand, the process $R^{\pi}_{r}$ is nondecreasing, right-continuous and $\mathbb{F}$-adapted, with $R^{\pi}_{r}(0-)=0$ satisfying 
\begin{equation}\label{admissibility2}
\E_{x}\left[ \int_{[0,\infty)} e^{-qt} \diff R_r^{\pi}(t) \right] < \infty, \quad x \geq 0,
\end{equation}
and $U_r^{\pi}(t)\geq0$ a.s.. The  rate of discounting $q$ is a positive constant.

Let $\zeta$ be an exponential random variable with parameter $\lambda > 0$, independent of $X$, representing a random terminal time. We consider that a payoff is made upon termination, given by a function $w : [0,\infty) \rightarrow \R$. Then, assuming that $\beta > 1$ is the cost per unit of injected capital, the objective is to maximize the expected NPV
\begin{align} \label{v_pi}
		v_{\pi} (x) := \mathbb{E}_x \left[ \int_{[0, \zeta)} e^{-q t} \diff L_r^{\pi}(t) - \beta \int_{[0, \zeta)} e^{-q t} \diff R_r^{\pi}(t) + e^{-q \zeta} w(U_r^{\pi}(\zeta)) \right], \quad x\geq 0,
	\end{align}
	over  the set of all admissible strategies $\mathcal{A}$. Hence the problem is to compute the value function
	\begin{equation}\label{control:value}
		v(x):=\sup_{\pi \in\A}v_{\pi}(x), \quad x \geq 0,
	\end{equation}
	and obtain an optimal strategy $\pi^*$ such that $v_{\pi^{*}}=v$, if such a strategy exists.

	 {We make the following assumptions:
		\begin{assump}\label{assump_1_aux} We assume that $\E[X(1)] = \psi_X'(0+) > -\infty$.
		\end{assump}}
		\begin{assump}\label{assum_w}
		We assume that $w$ is {a concave function} with $$w'_{+}(0+) \leq \beta\quad \text{and}\quad w'_{+}(\infty):=\lim_{x\rightarrow\infty}w'_{+}(x)\ \in [0,1].$$ 
	\end{assump}
	\subsection{Spectrally negative processes with Parisian reflection above}\label{sec_snlp_par_ruin}
Let $\mathcal{T}_r = \lbrace T(i) : i \geq 1 \rbrace $ be the set of jump times of an independent Poisson process with rate $ r > 0$. We construct the \lev process with Parisian reflection above at  the level $b\geq 0$, denoted by ${U_r^b:=\{U_r^b(t):t\geq 0\}}$, as follows: the process is observed only at times  {belonging to the set} $\mathcal{T}_r$ and is  {pushed} down to the level $b$ if and only if it is  {observed} above $b$. Formally, we have:
\[
U_r^b(t) = X(t), \quad t \in [0, T_0^+(1)),
\]
where 
\begin{equation}\label{def_T_0_1}
T_b^+(1) := \inf \lbrace T(i) \in \mathcal{T}_r : X(T(i)) > b \rbrace.
\end{equation}
The process then jumps downward by $X(T_b^+(1))-b$ so that $U_r^{b}(T_b^+(1)) = b$. For $T_b^+(1) \leq t < T_b^+(2)  := \inf\{T(i) > T_b^+(1):\; U^{b}_r(T(i)-) > b\}$, we have $U_r^{b}(t) = X(t) - (X(T_b^+(1))-b)$.  The process $U_r^{b}$ can be constructed by repeating this procedure.

Suppose $L_r^b(t)$ is the cumulative amount of (Parisian) reflection until time $t \geq 0$. Then we have
\begin{align*}
	U_r^{b}(t) = X(t) - L_r^b(t), \quad t \geq 0,
\end{align*}
with
\begin{align}
	L_r^b(t) := \sum_{T_b^+(i) \leq t} \left(U_r^{b}(T_b^+(i)-)-b\right), \quad t \geq 0, \label{def_L_r}
\end{align}
where $(T_b^+(n); n \geq 1)$ can be constructed inductively by \eqref{def_T_0_1} and
\begin{eqnarray*}T_b^+(n+1) := \inf\{T(i) > T_b^+(n):\; U_r^{b}(T(i)-) >b\}, \quad n \geq 1.
\end{eqnarray*}


	\subsection{
		Periodic-classical barrier strategies}\label{PR}
	The objective of this section is to show the optimality of the periodic-classical barrier strategy 
	\[
	\bar{\pi}^{0,b} := \{(L_r^{0,b}(t), R_r^{0,b}(t)); t \geq 0 \}.
	\]
	The controlled process $U_r^{0,b}$ becomes the \textit{L\'evy process with Parisian reflection above and classical reflection below}, which can be constructed as follows.

	Let  $R^{{0,b}}_r (t):= (-\inf_{0 \leq s \leq t} X(s)) \vee 0$ for $t\geq 0$, then we have 
	\begin{align*}
		U_r^{0,b}(t) = X(t) + R^{{0,b}}_r(t), \quad 0 \leq t < \widehat{T}_b^{+} (1) 
	\end{align*}
	where $\widehat{T}_b^{+}(1) := \inf\{T(i):\; X(T(i)-)+R_r(T(i)-) > b\}$.
	The process then jumps down by $X(\widehat{T}_b^{+}(1))+R_r(\widehat{T}_b^{+}(1))-b$ 
	so that $U_r^{0,b}(\widehat{T}_b^{+}(1)) = b$. For $\widehat{T}_b^{+}(1) \leq t < \widehat{T}_b^{+}(2)  := \inf\{T(i) > \widehat{T}_b^{+}(1):\; U_r^{0,b}(T(i) -) > b\}$, $U_r^{0,b}(t)$ is the process reflected  at $0$ of  the process $( X(t) - X(\widehat{T}_b^{+}(1)) +b; t \geq \widehat{T}_b^+(1) )$. 
	The process $U_r^{0,b}$ can be constructed by repeating this procedure.
	It is clear that it admits a decomposition
	\begin{align*}
		U_r^{0,b}(t) = X(t) - L_r^{0,b}(t) + R_r^{0,b}(t), \quad t \geq 0,
	\end{align*}
	where $L_r^{0,b}(t)$ and $R_r^{0,b}(t)$ are, respectively, the cumulative amounts of Parisian and classical reflection until time $t$. 
	
	Notice that for $b \geq 0$, the strategy $\bar{\pi}^{0,b} := \{(L_r^{0,b}(t), R_r^{0,b}(t)); t \geq 0 \}$ is admissible for the  problem described at the beginning of this section, because \eqref{admissibility2} holds by  Proposition \ref{NPV_aux} and Assumption \ref{assump_1_aux}. Its expected NPV of dividends minus the costs of capital injection and payoff at an exponential time is denoted by 
	\begin{align} \label{v_pi2}
		v_b(x) :&= \mathbb{E}_x \left[ \int_{[0, \zeta)} e^{-q t} \diff L_r^{0,b}(t) - \beta \int_{[0, \zeta)} e^{-q t} \diff R^{0,b}_r(t) + e^{-q \zeta} w(U_r^{0,b}(\zeta)) \right] \\
		& = \mathbb{E}_x \left[ \int_{[0, \infty)} e^{-\theta t} \diff L_r^{0,b}(t) - \beta \int_{[0, \infty)} e^{-\theta t} \diff R^{0,b}_r(t) + \lambda \int_0^{\infty} e^{- \theta t} w(U_r^{0,b}(t)) \diff t \right], \quad x\geq 0, \notag
	\end{align}
	where $\theta := q + \lambda$. 
	
	The main result for this section confirms the optimality of the periodic-classical barrier strategy for the auxiliary control problem.
	\begin{theorem}
Under Assumptions {\ref{assump_1_aux}} and \ref{assum_w}, there exists a constant barrier $0 \leq b^* < \infty$ such that the periodic-classical reflection strategy at the threshold $b^*$ is optimal, i.e., $\overline{\pi}^{0,b^*}$ is an optimal strategy for the problem \eqref{control:value} and the value function is given by 
\[
v(x) = v_{\overline{\pi}^{0,b^*}}(x) = v_{b^*}(x), \quad \text{ for } x \geq 0.
\]
	\end{theorem}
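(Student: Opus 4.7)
The approach is a classical \emph{guess-and-verify} strategy. Leveraging the explicit scale-function representation of $v_b(x)$ derived in Section \ref{sec_NPV_Scale}, one computes the right derivative $(v_b)'_+$ explicitly in terms of $W^{(q)}$, $Z^{(q)}$, $Z^{(q)}(\cdot,\Phi(q+r))$, $\rho_{b}^{(q,r)}(\cdot;w)$, $\Xi^{(q,r)}(b;w)$, and the payoff $w$. Following the smooth-fit heuristic common in Poissonian dividend problems, I would select the candidate barrier $b^*\in[0,\infty)$ as the solution of the first-order optimality equation $(v_{b^*})'_+(b^{*}+)=1$, taking $b^*=0$ when no strictly positive root exists. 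Existence of such a $b^*$ is established by an intermediate-value argument, exploiting monotonicity of the scale functions, their asymptotics at $+\infty$ under Assumption \ref{assump_1_aux}, the behaviour $w'_+(\infty)\in[0,1]$ and $w'_+(0+)\le\beta$ from Assumption \ref{assum_w}, and the concavity of $w$.

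Second, I would verify that $v_{b^*}$ coincides with the value function by checking the HJB variational inequalities. With $\theta := q + \lambda$ and $\mathcal{L}$ denoting the infinitesimal generator of $X$, these read, for $x\ge 0$,
\begin{enumerate}
\item $(\mathcal{L} - \theta)v_{b^*}(x) + \lambda w(x) + r\bigl[\sup_{0\le \ell \le x}(v_{b^*}(x-\ell)+\ell) - v_{b^*}(x)\bigr] \le 0$ a.e.\ on $(0,\infty)$, with equality on the set where $\overline{\pi}^{0,b^*}$ acts;
\item $(v_{b^*})'_+(x) \ge 1$ for $x\in[0,b^*]$ and $(v_{b^*})'_+(x) \le 1$ for $x\ge b^*$, so that at each Poisson mark the barrier $b^*$ realises the supremum in (1);
\item $(v_{b^*})'_+(0+) \le \beta$, so that no extra capital injection beyond the minimal bail-out at zero is beneficial.
\end{enumerate}
Granted these, applying the Meyer--It\^o formula to $e^{-\theta t}v_{b^*}(U_r^\pi(t))$ for an arbitrary $\pi\in\A$, compensating the Poisson marks in $L_r^\pi$, and invoking (1)--(3), yields $v_\pi(x)\le v_{b^*}(x)$ (after letting the localising sequence tend to infinity using \eqref{admissibility2} and Assumption \ref{assum_w}). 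Equality holds along $\overline{\pi}^{0,b^*}$ by construction, so $v = v_{b^*}$.

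The main obstacle is proving the slope bounds in (2) together with the variational inequality (1) on $[0,b^*]$; these require a delicate analysis of the derivatives of the scale-function building blocks and rest crucially on the concavity of $v_{b^*}$, which itself must be derived from sign properties of explicit integrals of $W^{(q)}$, $Z^{(q)}(\cdot,\Phi(q+r))$ and the auxiliaries $W_{b^*}^{(q,r)}$, $Z_{b^*}^{(q,r)}$, $\overline Z_{b^*}^{(q,r)}$, together with the concavity of $w$ in Assumption \ref{assum_w}. A secondary difficulty is regularity: in the bounded-variation case, $v_{b^*}$ is only $\mathcal{C}^0$ at $0$ and $b^*$, so the verification must proceed via Meyer--It\^o rather than classical It\^o, with a careful treatment of the boundary local time at zero and of the Poissonian jumps of $L_r^{0,b^*}$ at $b^*$.
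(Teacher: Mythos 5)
Your proposal takes essentially the same route as the paper: explicit scale-function computation of $v_b$, selection of $b^*$ via the smooth-fit condition $v_{b^*}'(b^*)=1$ (with $b^*=0$ otherwise) established by a monotonicity/intermediate-value argument, and verification through the HJB variational inequalities combined with a Meyer--It\^o type verification lemma. The only notable difference is that, for the crucial slope bounds in your step (2), the paper avoids the ``delicate sign analysis of scale-function integrals'' you anticipate by first deriving the probabilistic identity $v_{b^*}'(x)=\beta-\E_x\bigl[\int_0^{\tau_0^-(r)}e^{-\theta t}\bigl(\beta\theta-\lambda w'_{+}(U_r^{b^*}(t))\bigr)\diff t\bigr]$, from which monotonicity of $v_{b^*}'$ and the bounds $0\le v_{b^*}'\le\beta$ follow immediately from the concavity of $w$.
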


\section{Expression of $v_b$ using the scale function.}\label{sec_NPV_Scale}
In this section we will write an expression for the expected NPV of total costs $v_b$ as in \eqref{v_pi2}. For convenience, let us denote
\[
v_b^{LR}(x) = \mathbb{E}_x \left[ \int_{[0, \infty)} e^{-\theta t} \diff L_r^{0,b}(t) -\beta \int_{[0, \infty)} e^{-\theta t} \diff R_r^{0,b}(t) \right], \, v_b^w(x) = \mathbb{E}_x \left[ \int_{[0, \infty)} e^{-\theta t} w(U_r^{0,b}(t))  \diff t \right].
\]
 {It is clear} that $v_b(x) = v_b^{LR}(x) + \lambda v_b^w(x)$  {for $x\geq0$}. We also have that the expected NPV of dividend payments and capital injection $v_b^{LR}$ has already been computed in Lemma 3.1 in \cite{NobPerYamYan} 
, which is  given by
 \begin{align}\label{v1}
 v^{LR}_{b}(x)&=- C^1_b\left( Z_b^{(\theta,r)}(x) - r Z^{(\theta)}(b) \overline{W}^{(r+\theta)}(x-b) \right) - r \overline{\overline{W}}^{(r+\theta)}(x-b)\\
 &\quad + \beta \left( \overline{Z}_b^{( \theta ,r)}(x) + \dfrac{\psi_X'(0+)}{\theta} - r \overline{Z}^{(\theta)}(b) \overline{W}^{(\theta+r )}(x-b) \right),\notag
 \end{align}
 with
\begin{equation}\label{v2}
C^1_b = \dfrac{r ( \beta Z^{(\theta)}(b) -1)}{\theta \Phi(r+\theta) Z^{(\theta)}(b,\Phi(r+\theta))} + \dfrac{\beta}{\Phi(r+\theta)}.
\end{equation}
Therefore, it only remains to compute the expected NPV of running costs $v_b^w$ . To this end, we provide the following result and the proof is deferred to Appendix \ref{resol_par_app} (see Section \ref{App_Proof_1}).
 \begin{proposition}\label{resol_par}
 	For $x,b\in\R$, $q > 0$, and a positive measurable function $h$ on $\R$ with compact support,
 	\begin{align}\label{r12}
 		g^{(q)}(x;h)&:=\E_x\left[\int_0^{\infty}e^{-qt}h(U_r^{0,b}(t))dt\right]\\
 		&=\dfrac{(C^{(q,r)}(b;h)+ {\rho^{(q)}_{b}(b;h)})}{Z^{(q)}(b)}Z^{(q,r)}_{b}(x)- {\rho^{(q,r)}_{b}(x;h)}\notag\\
 		& \quad -rC^{(q,r)}(b;h)\overline{W}^{(q+r)}(x-b)-\int_0^{x-b}h(z+b)W^{(q+r)}(x-b-z)\diff z, \notag
 	\end{align}
 	where
 	\begin{align}\label{e1}
 		\begin{split}
 			C^{(q,r)}(b;h)&=\dfrac{\Xi^{(q,r)}(b;h)-r\dfrac{\rho^{(q)}_{b}(b;h)}{Z^{(q)}(b)}\displaystyle\int_b^{\infty}e^{-\Phi(q+r) (y-b) }Z^{(q)}(y)\diff y}{\displaystyle\dfrac{r}{Z^{(q)}(b)}\int_b^{\infty}e^{-\Phi(q+r) (y-b) }Z^{(q)}(y)\diff y-\dfrac{r}{\Phi(q+r)}}.
 		\end{split}
 	\end{align}
 \end{proposition}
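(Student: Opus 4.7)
The plan is to reduce the computation to two more tractable pieces via the strong Markov property at the first visit to $0$. Let $g^{(q)}_{[0,b]}(x;h)$ denote the expected discounted occupation time of $h$ by the process $U_r^b$ (Parisian reflection above at $b$, no reflection below) killed at its first passage strictly below $0$. Because the classical reflection at $0$ is instantaneous, one can write
$$g^{(q)}(x;h) \;=\; g^{(q)}_{[0,b]}(x;h) \;+\; \Psi_b^{(q)}(x)\, g^{(q)}(0;h),$$
where $\Psi_b^{(q)}(x)$ is the expected discounted value of hitting $0$ starting from $x$ before the next Parisian reflection and excursion bookkeeping of the reflected process. Specialising this to $x=0$ yields a self-consistent equation that pins down $g^{(q)}(0;h)$, and hence the full formula.

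The first step is therefore to obtain $g^{(q)}_{[0,b]}(x;h)$ in closed form. I would condition on the first Parisian reflection epoch $T_b^+(1)$, using the fact that at $T_b^+(1)$ the process is reset to level $b$, to write
$$g^{(q)}_{[0,b]}(x;h) \;=\; \E_x\!\left[\int_0^{T_b^+(1) \wedge \tau_0^-} e^{-qt}\, h(U_r^b(t))\, dt\right] \;+\; \E_x\!\left[e^{-q T_b^+(1)};\, T_b^+(1) < \tau_0^-\right] g^{(q)}_{[0,b]}(b;h),$$
with $\tau_0^- := \inf\{t>0: U_r^b(t)<0\}$. Both quantities on the right can be computed in closed form using the fluctuation identities for spectrally negative Lévy processes killed at rate $r$ while above $b$: the scale-type objects $W^{(q,r)}_b$ and $Z^{(q,r)}_b$ from \eqref{identities} and the kernels $\rho^{(q)}_b$, $\rho^{(q,r)}_b$ from \eqref{fun_rho}--\eqref{fun_rho_0} are the natural carriers of the occupation of $h$ below $b$, while $\Xi^{(q,r)}(b;h)$ in \eqref{e4} encodes the contribution of the overshoot above $b$ at $T_b^+(1)$, owing to the fact that the Parisian overshoot above $b$ has the well-known $e^{-\Phi(q+r)\cdot}$ density at the discount rate $q+r$.

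Evaluating the resulting identity at $x=b$ and solving the linear equation for $g^{(q)}_{[0,b]}(b;h)$ produces exactly the constant $C^{(q,r)}(b;h)$ in \eqref{e1}: the denominator emerges from isolating the self-referential term on both sides, and the numerator combines $\Xi^{(q,r)}(b;h)$ with a correction built from $\rho^{(q)}_b(b;h)$ multiplied by $\int_b^\infty e^{-\Phi(q+r)(y-b)} Z^{(q)}(y)\, dy$, which originates from the pre-$T_b^+(1)$ integral of $h$ against the resolvent of the classically reflected piece. Assembling $g^{(q)}_{[0,b]}$ with the excursion-theoretic term $Z^{(q,r)}_b(x)/Z^{(q)}(b)$ then yields \eqref{r12}.

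The main obstacle I anticipate is not conceptual but the bookkeeping: several distinct resolvents and killing mechanisms must be combined consistently, and one must verify that the two regimes $x\in[0,b]$ and $x>b$ assemble into a single formula --- here the reductions in \eqref{Rem_identities_simplify} and \eqref{Rem_identities_simplify.0} together with the convolution identities \eqref{eq1} are crucial in matching the pieces. A minor technical point is that $h$ is assumed measurable with compact support, which ensures all the integrals above converge and allows the interchange of expectation and integration throughout; the extension to the $h$ actually needed later on can be obtained by monotone/dominated convergence.
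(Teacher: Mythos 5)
Your high-level architecture is genuinely different from the paper's and, as written, it does not close. The paper decomposes at two events: for $x\le b$ it applies the strong Markov property at $\kappa_b^+$, the first upward passage of the classically reflected process $Y$, giving $g^{(q)}(x;h)=\tfrac{Z^{(q)}(x)}{Z^{(q)}(b)}\bigl(\rho^{(q)}_b(b;h)+g^{(q)}(b;h)\bigr)-\rho^{(q)}_b(x;h)$ via \eqref{fupt_r}--\eqref{rrb}; for $x>b$ it applies it at $T(1)\wedge\tau_b^-$; and the self-consistency that determines $g^{(q)}(b;h)=C^{(q,r)}(b;h)$ is obtained by applying the strong Markov property from $b$ at the first Poisson epoch $T(1)$ (the equation $g^{(q)}(b;h)=\delta_1+g^{(q)}(b;h)\delta_2+\delta_3$). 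You instead propose to split at the first downcrossing $\tau_0^-(r)$ of the Parisian-only process $U_r^b$, i.e.\ $g^{(q)}(x;h)=\tilde g^{(q)}(x;h)+\E_x\bigl[e^{-q\tau_0^-(r)}\bigr]\,g^{(q)}(0;h)$, and then to solve for $g^{(q)}(0;h)$ at $x=0$.

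There are two concrete problems. First, the self-consistency you set up degenerates precisely in the unbounded-variation case, which the proposition must cover. Starting from $0$, a spectrally negative L\'evy process of unbounded variation immediately enters $(-\infty,0)$, so $\tau_0^-(r)=0$ a.s.\ under $\Prob_0$; correspondingly $W^{(q)}(0)=0$ (Remark~\ref{remark_scale_function_properties}(2)), hence $\tilde g^{(q)}(0;h)=0$ and, from \eqref{403} at $x=0$, $\E_0\bigl[e^{-q\tau_0^-(r)}\bigr]=1$. Your equation at $x=0$ then reads $g^{(q)}(0;h)=0+1\cdot g^{(q)}(0;h)$, which carries no information. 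The paper's choice of anchor point $b$ and of the Poisson epoch $T(1)$ (rather than a hitting time of the boundary) is exactly what sidesteps this degeneracy, since $\E_b[e^{-qT(1)}\mathbf 1_{\{Y(T(1))\ge b\}}]=\delta_2<1$ in all cases.

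Second, you assert that solving the self-referential equation for $g^{(q)}_{[0,b]}(b;h)$ produces $C^{(q,r)}(b;h)$ of \eqref{e1}. It does not: $g^{(q)}_{[0,b]}(b;h)=\tilde g^{(q)}(b;h)=\tilde C^{(q,r)}(b;h)$, the constant of \eqref{r11a1} from Proposition~\ref{resol_par_ruin}, which has $W^{(q)}(b)$ and $Z^{(q)}(b;\Phi(q+r))$ in place of the $Z^{(q)}(b)$ and $\int_b^\infty e^{-\Phi(q+r)(y-b)}Z^{(q)}(y)\,\diff y$ appearing in \eqref{e1}. The constant $C^{(q,r)}(b;h)$ is $g^{(q)}(b;h)$, the resolvent of the doubly-reflected process at $b$, and in the paper it comes out of the $T(1)$ self-consistency equation together with the reflected-process resolvent identities \eqref{rrbi}; it is not recoverable from your killed-process equation alone. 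Even if you patched the $x=0$ degeneracy (e.g.\ by anchoring at $x=b$ instead), you would still need the $x\le b$ formula relating $g^{(q)}(0;h)$ and $g^{(q)}(b;h)$, which in effect forces you back onto the paper's route through $\kappa_b^+$.
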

Now we provide an expression of the expected NPV of the periodic-classical barrier strategy with additional running costs $v_b$, given in \eqref{v_pi2}, in terms of scale functions. We omit the proof as it is a direct consequence of \eqref{v1} and  Proposition \ref{resol_par}, due to the fact that
\begin{align}\label{v_w} 
v_b^w(x) = g^{(\theta)}(x;w)
&=\dfrac{(C^{(\theta,r)}(b;w)+ {\rho^{(\theta)}_{b}(b;w)})}{Z^{(\theta)}(b)}Z^{(\theta,r)}_{b}(x)- {\rho^{(\theta,r)}_{b}(x;w)}\\
& \quad -rC^{(\theta,r)}(b;w)\overline{W}^{(\theta+r)}(x-b)-\int_0^{x-b}w(z+b)W^{(\theta+r)}(x-b-z)\diff z.\notag
\end{align}
\begin{proposition}\label{NPV_aux}
For $b\geq0$ and $x\in\R$,
\begin{align}\label{Aux_NPV}
	v_b(x) &= - C^1_b\left( Z_b^{(\theta,r)}(x) - r Z^{(\theta)}(b) \overline{W}^{(r+\theta)}(x-b) \right) - r \overline{\overline{W}}^{(\theta + r)}(x-b)\\
		&\quad + \beta \left( \overline{Z}_b^{( \theta ,r)}(x) + \dfrac{\psi'(0+)}{\theta} - r \overline{Z}^{(\theta)}(b) \overline{W}^{(r+ \theta )}(x-b) \right)\notag \\
		&\quad + \lambda \bigg[ \dfrac{( {C^{(\theta,r)}(b;w)} + \rho_b^{(\theta)}(b;w))}{Z^{(\theta)}(b)} Z_b^{(\theta,r)}(x)- \rho_b^{(\theta,r)}(x;w)   \notag  \\
		& \qquad   - r  {C^{(\theta,r)}(b;w)} \overline{W}^{(r + \theta)}(x-b)   -\int_0^{x-b} w(z+b) W^{(r+\theta)}(x-b-z) \diff z \bigg],\notag
\end{align} 
where  {$C^{1}_{b}$ and $C^{(\theta,r)}(b;w)$ are given in  \eqref{v2}  and  \eqref{e1} respectively}.

\end{proposition}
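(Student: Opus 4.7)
The plan is to exploit the decomposition $v_b(x)=v_b^{LR}(x)+\lambda v_b^w(x)$ recorded just before the proposition and then substitute the two expressions already at our disposal: \eqref{v1}--\eqref{v2} for the dividend/injection piece (coming from Lemma 3.1 of \cite{NobPerYamYan}) and the $q\mapsto\theta$ specialisation of Proposition \ref{resol_par} with $h=w$, which is exactly what \eqref{v_w} records. Summing these two formulas and collecting the scale-function terms yields the right-hand side of \eqref{Aux_NPV}: the first two lines come directly from \eqref{v1}, and the bracketed expression multiplied by $\lambda$ comes directly from \eqref{v_w}. So the proof is essentially algebraic bookkeeping once one justifies the decomposition.

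Before substituting, I would verify the elementary identity used to pass from the exponential-terminal-time form \eqref{v_pi} to the integral form \eqref{v_pi2}. Conditioning on $\zeta\sim\operatorname{Exp}(\lambda)$ independent of $(X,N^r)$ and using Fubini, one obtains
\[
\mathbb{E}_x\!\left[\int_{[0,\zeta)} e^{-qt}\,\diff L_r^{0,b}(t)\right]=\mathbb{E}_x\!\left[\int_{[0,\infty)} e^{-qt}\Prob(\zeta>t)\,\diff L_r^{0,b}(t)\right]=\mathbb{E}_x\!\left[\int_{[0,\infty)} e^{-\theta t}\,\diff L_r^{0,b}(t)\right],
\]
and identically for the $R_r^{0,b}$-integral, while
\[
\mathbb{E}_x\!\left[e^{-q\zeta}w(U_r^{0,b}(\zeta))\right]=\int_0^\infty \lambda e^{-\lambda t}\,\mathbb{E}_x\!\left[e^{-qt}w(U_r^{0,b}(t))\right]\diff t=\lambda\,\mathbb{E}_x\!\left[\int_0^\infty e^{-\theta t}w(U_r^{0,b}(t))\,\diff t\right].
\]
This reduces \eqref{v_pi2} to the linear combination $v_b=v_b^{LR}+\lambda v_b^w$ with a pure discount rate $\theta$.

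The only genuinely non-trivial point is the application of Proposition \ref{resol_par} to $h=w$, since that proposition is stated for positive measurable $h$ with compact support, whereas our $w$ may be unbounded. This is where I would spend the most care: using Assumption \ref{assum_w}, $w$ is concave with $w'_+(\infty)\in[0,1]$, so it grows at most linearly, and Assumption \ref{assump_1_aux} together with the admissibility estimate \eqref{admissibility2} for the strategy $\bar{\pi}^{0,b}$ (which follows from the explicit formula \eqref{v1}) guarantees that $\mathbb{E}_x\!\int_0^\infty e^{-\theta t}|w(U_r^{0,b}(t))|\,\diff t<\infty$. Truncating $w$ at level $n$ to obtain compactly supported approximants $w_n$, applying Proposition \ref{resol_par}, and passing to the limit via monotone/dominated convergence (noting that each ingredient $\rho^{(\theta)}_b(\,\cdot\,;w_n)$, $\rho^{(\theta,r)}_b(\,\cdot\,;w_n)$, $\Xi^{(\theta,r)}(b;w_n)$, and hence $C^{(\theta,r)}(b;w_n)$ converges to its counterpart for $w$) yields \eqref{v_w} in the present setting.

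With these two justifications in hand, substituting \eqref{v1} and \eqref{v_w} into $v_b=v_b^{LR}+\lambda v_b^w$ and rearranging produces \eqref{Aux_NPV} verbatim. There is no real obstacle beyond the integrability bookkeeping for $w$; the expected difficulty — deriving either of the two constituent formulas — has already been absorbed into \eqref{v1} and Proposition \ref{resol_par}.
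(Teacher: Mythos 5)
Your proof follows exactly the paper's approach: decompose $v_b = v_b^{LR} + \lambda v_b^w$, substitute \eqref{v1} for the dividend/injection part and Proposition \ref{resol_par} (with $q=\theta$, $h=w$) for the running-payoff part, and sum. The paper itself omits the proof with precisely this justification, so your argument matches it; the only difference is that you add a careful truncation step to extend Proposition \ref{resol_par} from compactly supported $h$ to the linearly growing $w$ of Assumption \ref{assum_w}, which is a genuine (and welcome) tightening of a point the paper silently glosses over.
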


Additionally, in the next result we provide an expression for the resolvent of L\'evy process with Parisian reflection above at the threshold $b$, $U_r^{b}$. The proof is deferred to the Appendix (see Section \ref{App_Proof_2}).

\begin{proposition}\label{resol_par_ruin}
	For $x,b\in\R$, $q>0$, and a positive measurable function $h$ on $\R$ with compact support, 
	\begin{align}\label{r12aa}
	\tilde{g}^{(q)}(x;h)  &:=\E_x\left[\int_0^{\tau_0^-(r)}e^{-qt}h(U_r^{b}(t))dt\right]\\
		&= \dfrac{(\tilde{C}^{(q,r)}(b;h)+\rho^{(q)}_{b}(b;h))}{W^{(q)}(b)} W^{(q,r)}_b(x) - \rho^{(q,r)}_b(x;h)\notag\\
		& \quad - r \tilde{C}^{(q,r)}(b;h)\overline{W}^{(q+r)}(x-b)  - \int_0^{x-b} h(y+b) W^{(q+r)}(x-b-y) \diff y,\notag
	\end{align}
	where
	\begin{equation}\label{r11a1}
		\tilde{C}^{(q,r)}(b;h)= \dfrac{\Xi^{(q,r)}(b;h)-\dfrac{\rho^{(q)}_{b}(b;h)}{W^{(q)}(b)} Z^{(q)}(b;\Phi(q+r)) }{\dfrac{ Z^{(q)}(b;\Phi(q+r)) }{W^{(q)}(b)}-\dfrac{r}{\Phi(q+r)}}.
	\end{equation}
\end{proposition}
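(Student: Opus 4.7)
The plan is to prove the identity by combining a strong Markov decomposition of $U_r^b$ with explicit fluctuation identities for spectrally negative L\'evy processes observed at Poissonian times. Set
\[
\kappa := T_b^+(1)\wedge\tau_0^-(r),
\]
the minimum of the first Poissonian observation epoch at which $X$ lies strictly above $b$ and the first downcrossing of $0$. Before $\kappa$ the process $U_r^b$ coincides with $X$, while on $\{T_b^+(1)<\tau_0^-(r)\}$ it is reset to $b$ at time $T_b^+(1)$ and, by the strong Markov property, restarts afresh from $b$. Hence
\[
\tilde g^{(q)}(x;h) = A(x;h) + B(x)\,\tilde g^{(q)}(b;h),
\]
where
\[
A(x;h):=\E_x\!\left[\int_0^\kappa e^{-qt}h(X(t))\diff t\right], \qquad B(x):=\E_x\!\left[e^{-qT_b^+(1)}\mathbf{1}_{\{T_b^+(1)<\tau_0^-(r)\}}\right].
\]
Evaluating at $x=b$ gives the self-referential equation $\tilde g^{(q)}(b;h) = A(b;h) + B(b)\,\tilde g^{(q)}(b;h)$; since $B(b) \leq \E_b[e^{-qT_b^+(1)}] < 1$ when $q>0$, this identifies $\tilde g^{(q)}(b;h) = A(b;h)/(1-B(b))$, so the entire function is determined by the four quantities $A(x;h)$, $B(x)$, $A(b;h)$, and $B(b)$.

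The next step is to compute $A(x;h)$ and $B(x)$ explicitly in terms of scale functions. A Cox/Feynman--Kac representation recasts $A(x;h)$ as the $q$-potential of $X$ killed at $\tau_0^-$ with the additional multiplicative weight $e^{-r\int_0^t\mathbf{1}_{\{X(s)>b\}}\diff s}$, while an analogous representation for $B(x)$ encodes the overshoot $X(T_b^+(1))-b$ at an independent exponential time, whose Laplace transform is driven by $\Phi(q+r)$ through the Wiener--Hopf factorization of $X$. Combining the standard two-sided exit identities for spectrally negative L\'evy processes (cf.\ \cite{lrz}) with the definitions \eqref{identities} and \eqref{fun_rho}--\eqref{e4}, one obtains expressions for $A(x;h)$ and $B(x)$ as linear combinations of $W_b^{(q,r)}(x)$, $\rho_b^{(q,r)}(x;h)$, $\overline W^{(q+r)}(x-b)$, and $\int_0^{x-b}h(y+b)W^{(q+r)}(x-b-y)\diff y$, with boundary data $W^{(q)}(b)$, $\rho_b^{(q)}(b;h)$, $Z^{(q)}(b,\Phi(q+r))$, and $\Xi^{(q,r)}(b;h)$ appearing in the coefficients.

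Finally, substituting $\tilde g^{(q)}(b;h) = A(b;h)/(1-B(b))$ back into the decomposition and simplifying using \eqref{identities}, \eqref{Rem_identities_simplify}, \eqref{def_z_nuevo} and \eqref{e4} produces the stated formula: the denominator $Z^{(q)}(b,\Phi(q+r))/W^{(q)}(b) - r/\Phi(q+r)$ of $\tilde C^{(q,r)}(b;h)$ emerges from $1-B(b)$ and the numerator $\Xi^{(q,r)}(b;h) - (\rho_b^{(q)}(b;h)/W^{(q)}(b))Z^{(q)}(b,\Phi(q+r))$ from $A(b;h)$. The main obstacle is the explicit evaluation of $A$ and $B$: the simultaneous treatment of Poissonian reset from above and killing from below forces one to stitch together the below-$b$ resolvent of $X$ killed at $0$, encoded by $W^{(q)}$ and $\rho_b^{(q)}$, with the above-$b$ Wiener--Hopf overshoot distribution at the independent exponential Poisson clock, encoded by $W^{(q+r)}$, $\Phi(q+r)$, and $Z^{(q)}(\cdot,\Phi(q+r))$. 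This parallels the appendix proof of Proposition \ref{resol_par}, with $W^{(q)}(b)$ taking over the role that $Z^{(q)}(b)$ played there because classical reflection at $0$ has been replaced by killing at $0$.
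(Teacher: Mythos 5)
Your high-level plan is sound and mirrors the paper's Appendix~\ref{App_Proof_2} proof in its essential structure: both set up a self-referential equation for the boundary value $\tilde g^{(q)}(b;h)$ via the strong Markov property, solve for it, and then propagate via fluctuation identities. You also correctly spot the key structural difference with Proposition~\ref{resol_par}: killing at $0$ (rather than classical reflection) swaps $W^{(q)}(b)$ for $Z^{(q)}(b)$ as the normalizing factor.

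However, your sketch leaves the entire computational core unperformed and slightly misdescribes how it is done. The quantity $T_b^+(1)$ is \emph{not} a two-sided exit time of $X$, and neither ``two-sided exit identities'' nor a raw Wiener--Hopf factorization delivers $A(x;h)$ or $B(x)$ on their own; indeed the Cox/Feynman--Kac rewrite
$A(x;h)=\E_x[\int_0^{\tau_0^-}e^{-qt}e^{-r\int_0^t \mathbf 1_{\{X(s)>b\}}\diff s}h(X(t))\diff t]$
is exactly the occupation-time-weighted potential whose evaluation is the content of the proof. What the paper actually does is condition the process at $T(1)\wedge\tau_b^-$ for $x>b$ (to land either at a fresh Poisson epoch or strictly below $b$), at $\tau_b^+\wedge\tau_0^-$ for $x<b$ (using the absence of positive jumps to hit $b$ continuously), and at $T(1)\wedge\tau_0^-$ for the boundary value $b$, then invokes \eqref{fpt}, \eqref{eq2}, \eqref{rlp}, and the three identities in \eqref{lrz} to produce the explicit scale-function expressions, followed by a nontrivial amount of algebra (the identities \eqref{t_g_b_2}--\eqref{t_delta_3_a}) to close the system. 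Your $A$ and $B$ are implicitly defined by this machinery, but computing them requires exactly the piecewise Markov decomposition you were hoping to replace with a single stroke. The outline is correct and the renewal structure is right, but as written it substitutes a name ($A$, $B$) for the proof of the formula rather than providing it.
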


\section{Selection of a candidate optimal barrier $b^*$}\label{sec_candidate_optimal}
We focus on the periodic barrier strategy defined at the beginning of Section \ref{PR} and choose the candidate optimal barrier $b^*$, which satisfies  that  $v_{b^*}'(b^*) = 1$ if such $b^* > 0$ exists, and set it to be 0 otherwise.

Recall that the expected NPV of the periodic-classical barrier strategy is given by expression \eqref{Aux_NPV}. {We first} analyse the smoothness of the function $v_b$. 
\begin{lemma}\label{lem_NPV_diff} For all $b\geq 0$, and {$x \in \R \setminus \{0,b\}$},
\begin{align}\label{NPV_diff}
	v'_b(x) &= -\theta C^1_b W_b^{(\theta,r)}(x) - r \overline{W}^{(\theta + r)}(x-b) + \beta Z_b^{(\theta,r)}(x) \\
	& \quad + \lambda \left[  \left( \theta\dfrac{{C^{(\theta,r)}(b;w)} + \rho_b^{(\theta)}(b;w)}{Z^{(\theta)}(b)} - w(0) \right) W_b^{(\theta,r)}(x) \right. \notag\\
	&\qquad \left.  - \left( \int_0^{x-b} w'_{+}(z+b) W^{(r+\theta)}(x-b-z) \diff z + \rho_b^{(\theta,r)}(x;w'_{+}) \right) \right],\notag
\end{align}
and, if $X$ has paths of unbounded variation, 
\begin{align}\label{NPV_ddiff}
	v''_b(x) &= - \theta C^1_b \left( W^{(\theta)\prime}(x) + r W^{(\theta +r)}(x-b) W^{(\theta)}(b) + r\int_b^x W^{(\theta + r)}(x-y) W^{(\theta)\prime}(y) \diff y \right)\\
	&  - r W^{(\theta+r)}(x-b) + \beta \left( \theta W_b^{(\theta,r)}(x) + r W^{(\theta+r)}(x-b) Z^{(\theta)}(b)  \right) \notag\\
	&  + \lambda \left[ \left(\theta  \dfrac{{C^{(\theta,r)}(b;w)} + \rho_b^{(\theta)}(b;w)}{Z^{(\theta)}(b)}-w(0) \right) \right.\notag\\
	&\times\left( W^{(\theta)\prime}(x) + r W^{(\theta +r)}(x-b) W^{(\theta)}(b) + \int_b^x W^{(\theta + r)}(x-y) W^{(\theta)\prime}(y) \diff y   \right)  \notag\\
	&  -  \left( \int_0^b W^{(\theta)\prime}(x-y) w'_{+}(y) \diff y + r  \int_b^x W^{(\theta+r)\prime}(x-y)  \rho_b^{(\theta)}(y;w'_{+}) \diff y \right) \notag\\
	&\left. - \int_0^{x-b} w'_{+}(z+b) W^{(\theta+r)\prime}(x-b-z) \diff z  \right].\notag
\end{align}
\end{lemma}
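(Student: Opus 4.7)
The plan is to differentiate the explicit formula \eqref{Aux_NPV} term by term. Under Assumption \ref{assum_w} the function $w$ is concave, so its right derivative $w'_{+}$ exists everywhere and is locally bounded, while Remark \ref{remark_scale_function_properties} gives $W^{(q)}\in\mathcal{C}^{1}(\R\setminus\{0\})$ (in the unbounded variation case, or when $\Pi$ is atomless); together these justify termwise differentiation on $\R\setminus\{0,b\}$. The elementary building blocks $\frac{d}{dx}\overline{W}^{(\theta+r)}(x-b)=W^{(\theta+r)}(x-b)$ and $\frac{d}{dx}\overline{\overline{W}}^{(\theta+r)}(x-b)=\overline{W}^{(\theta+r)}(x-b)$ dispose of two of the terms outright.

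The first key step is to establish, by differentiating \eqref{identities} under the integral sign and then integrating by parts against $W^{(\theta+r)\prime}(x-y)$ (so that the boundary contribution at $y=x$ cancels the $W^{(\theta+r)}(0)$-term arising from Leibniz, making the identities valid in both variation cases),
\begin{align*}
\frac{d}{dx}Z_{b}^{(\theta,r)}(x)&=\theta W_{b}^{(\theta,r)}(x)+rZ^{(\theta)}(b)W^{(\theta+r)}(x-b),\\
\frac{d}{dx}\overline{Z}_{b}^{(\theta,r)}(x)&=Z_{b}^{(\theta,r)}(x)+r\overline{Z}^{(\theta)}(b)W^{(\theta+r)}(x-b).
\end{align*}
The ``extra'' $W^{(\theta+r)}(x-b)$-contributions produced here cancel exactly against the derivatives of $-rZ^{(\theta)}(b)\overline{W}^{(\theta+r)}(x-b)$ and $-r\overline{Z}^{(\theta)}(b)\overline{W}^{(\theta+r)}(x-b)$ already present in \eqref{Aux_NPV}, leaving the clean contributions $-\theta C_{b}^{1}W_{b}^{(\theta,r)}(x)$ and $\beta Z_{b}^{(\theta,r)}(x)$ claimed in \eqref{NPV_diff}.

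The main obstacle is differentiating the $\rho$-type terms. Integration by parts in \eqref{fun_rho}, with $u=w(y)$ and $v=-W^{(\theta)}(x-y)$, gives $\frac{d}{dx}\rho_{b}^{(\theta)}(x;w)=w(0)W^{(\theta)}(x)-w(b)W^{(\theta)}(x-b)+\rho_{b}^{(\theta)}(x;w'_{+})$. Applying the same trick to the outer convolution in \eqref{fun_rho_0}, and using the scale-function identity \eqref{eq1} to collapse $W^{(\theta)}(x-b)+r\int_{0}^{x-b}W^{(\theta+r)}(x-b-z)W^{(\theta)}(z)\diff z$ into $W^{(\theta+r)}(x-b)$, yields
\[
\frac{d}{dx}\rho_{b}^{(\theta,r)}(x;w)=w(0)W_{b}^{(\theta,r)}(x)-w(b)W^{(\theta+r)}(x-b)+r\rho_{b}^{(\theta)}(b;w)W^{(\theta+r)}(x-b)+\rho_{b}^{(\theta,r)}(x;w'_{+}).
\]
Differentiating the final term of \eqref{Aux_NPV} by Leibniz produces $w(b)W^{(\theta+r)}(x-b)+\int_{0}^{x-b}w'_{+}(z+b)W^{(\theta+r)}(x-b-z)\diff z$. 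Plugging everything back into the derivative of the $\lambda$-bracket, every contribution proportional to $W^{(\theta+r)}(x-b)$ cancels identically: the coefficient reduces to $r(C^{(\theta,r)}(b;w)+\rho_{b}^{(\theta)}(b;w))+w(b)-r\rho_{b}^{(\theta)}(b;w)-rC^{(\theta,r)}(b;w)-w(b)=0$, leaving precisely the right-hand side of \eqref{NPV_diff}.

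For the second derivative, the unbounded variation hypothesis ensures $W^{(\theta)}\in\mathcal{C}^{1}((0,\infty))$, so \eqref{NPV_diff} is itself differentiable on $\R\setminus\{0,b\}$. The only additional identity needed is
\[
\frac{d}{dx}W_{b}^{(\theta,r)}(x)=W^{(\theta)\prime}(x)+rW^{(\theta+r)}(x-b)W^{(\theta)}(b)+r\int_{b}^{x}W^{(\theta+r)}(x-y)W^{(\theta)\prime}(y)\diff y,
\]
again obtained by integration by parts on the convolution against $W^{(\theta+r)\prime}(x-y)$. The $\rho$-integrals $\rho_{b}^{(\theta,r)}(x;w'_{+})$ and $\int_{0}^{x-b}w'_{+}(z+b)W^{(\theta+r)}(x-b-z)\diff z$ are differentiated in their raw form, treating $w'_{+}$ as a fixed integrand, which produces the convolutions against $W^{(\theta)\prime}$ and $W^{(\theta+r)\prime}$ that make up the final three lines of \eqref{NPV_ddiff}.
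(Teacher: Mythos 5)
Your proposal is correct and follows essentially the same route as the paper: differentiate the explicit scale-function expression \eqref{Aux_NPV} term by term using the auxiliary identities for $Z_b^{(\theta,r)\prime}$, $\overline{Z}_b^{(\theta,r)\prime}$, $W_b^{(\theta,r)\prime}$, and $\rho_b^{(\theta,r)\prime}(\cdot;w)$ (obtained via integration by parts and \eqref{eq1}), and observe the cancellation of the $W^{(\theta+r)}(x-b)$-contributions. The only superficial difference is that the paper splits $v_b = v_b^{LR} + \lambda v_b^w$ and cites Lemma 3.2 of \cite{NobPerYamYan} for $v_b^{LR\,\prime}$ and $v_b^{LR\,\prime\prime}$, whereas you redo that part of the computation directly; both are sound.
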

\begin{proof}
{The first and second (if $X$ has paths of unbounded variation) derivatives of  $v_b^{LR\,\prime}$ are computed in Lemma 3.2 in \cite{NobPerYamYan}. Hence, it remains} to compute the first and second derivatives of   $v_b^w$.

{Differentiating  $Z^{(\theta,r)}_{b}(x)$ and $\rho^{(\theta,r)}_{b}(x,w)$ (given in \eqref{identities} and \eqref{fun_rho_0} respectively), using \eqref{eq1} and integration by parts}, it can be checked that 
	for $x >0$ we have
\begin{align*}
	Z_b^{(\theta,r)\prime}(x) &= \theta W_b^{(\theta,r)}(x) + r Z^{(\theta)}(b) W^{(\theta+r)}(x-b),\\
	\rho_{b}^{(\theta, r)\prime} (x ;w) &= {W}^{(\theta)}(x)w(0) - {W}^{(\theta)}(x-b) w(b) + \rho^{(\theta)}_{b}(x;w'_{+})\\
	&\quad+r\bigg(W^{(\theta+r)}(0)\rho^{(\theta)}_{b}(x;w)+ \int_b^{x } W^{(\theta + r)\prime} (x-y) \rho^{(\theta)}_{b} (y;w) \diff y\bigg) \notag\\
	&= \rho_b^{(\theta,r)}(x,w'_{+}) + r W^{(\theta+r)}(x-b) \rho_{b}^{(\theta)}(b;w) + w(0) W_b^{(\theta,r)}(x) - W^{(\theta+r)}(x-b)w(b). \notag
\end{align*}
From here, we obtain by {differentiating \eqref{v_w}}
\begin{align}\label{resolvent_dif}
v_b^{w \, \prime}(x) &= \dfrac{{C^{(\theta,r)}(b;w)} + \rho_b^{(\theta)}(b;w)}{Z^{(\theta)}(b)}\left( \theta W_b^{(\theta,r)}(x) + r Z^{(\theta)}(b) W^{(\theta+r)}(x-b) \right) - r {C^{(\theta,r)}(b;w)} W^{(\theta+r)}(x-b) \\
& \quad - \left( \rho_b^{(\theta,r)}(x,w'_{+}) + r W^{(\theta+r)}(x-b) \rho_{b}^{(\theta)}(b;w) + w(0) W_b^{(\theta,r)}(x) - W^{(\theta+r)}(x-b)w(b) \right) \notag\\
&\quad - \left( w(b)W^{(r+\theta)}(x-b) + \int_0^{x-b} w'_+(z+b) W^{(\theta+r)}(x-b-z) \diff z \right) \notag\\
&=\left( \theta \dfrac{{C^{(\theta,r)}(b;w)} + \rho_b^{(\theta)}(b;w)}{Z^{(\theta)}(b)}  - w(0) \right) W_b^{(\theta,r)}(x)  - \rho_b^{(\theta,r)}(x;w'_{+}) \notag\\
&\quad-  \int_0^{x-b} w'_{+}(z+b) W^{(\theta+r)}(x-b-z) \diff z , \notag
\end{align}
{where we used a change of variable to compute the derivative of  $\int_0^{x-b} w(z+b) W^{(\theta+r)}(x-b-z) \diff z$ given in the last term of the first equality}.

In addition, when $X$ has paths of unbounded variation, {we have using \eqref{identities},  \eqref{fun_rho_0} and Remark \ref{remark_scale_function_properties}(2)}
\begin{align}\label{resolvent_dif_2}
W_b^{(\theta,r)\prime}(x)&=W^{(\theta)\prime}(x) + r W^{(\theta +r)}(x-b) W^{(\theta)}(b) + \int_b^x W^{(\theta + r)}(x-y) W^{(\theta)\prime}(y) \diff y,\notag\\  
\rho^{(\theta,r)\prime}_{b}(x;w'_{+})&=\rho^{(\theta)\prime}_{b}(x;w'_{+})+r\int_{b}^{x}W^{(\theta)\prime}(x-y)\rho^{(\theta)}_{b}(y;w'_{+})\diff y.
\end{align}
Then, {by differentiating \eqref{resolvent_dif} and using \eqref{resolvent_dif_2} together with Remark \ref{remark_scale_function_properties}(2)}
\begin{align}
\label{a001}
v_b^{w \, \prime \prime}(x)
&= \left(\theta \dfrac{{C^{(\theta,r)}(b;w)} + \rho_b^{(\theta)}(b;w)}{Z^{(\theta)}(b)}-w(0)\right)\\
&\quad\times \left( W^{(\theta)\prime}(x) + r W^{(\theta +r)}(x-b) W^{(\theta)}(b) + \int_b^x W^{(\theta + r)}(x-y) W^{(\theta)\prime}(y) \diff y   \right) \notag\\
&\quad - \left( \int_0^b W^{(\theta)\prime}(x-y) w'_{+}(y) \diff y + r  \int_b^x W^{(\theta+r)\prime}(x-y)  \rho_b^{(\theta)}(y;w'_{+}) \diff y \right) \notag\\
&\quad  - \int_0^{x-b} w'_{+}(z+b) W^{(\theta+r)\prime}(x-b-z) \diff z . \notag
\end{align}
The proof is completed.
\end{proof}
\begin{remark}\label{Remark_C1_Resolvent}
{Using \eqref{lem_NPV_diff} and \eqref{NPV_ddiff} together with Remark \ref{remark_scale_function_properties}(1) we have that the mapping $x\mapsto v_b^w(x)$ is continuously differentiable (resp. twice continuously differentiable) on $\mathbb{R}\backslash\{0,b\}$ when $X$ is of bounded variation (resp. unbounded variation).	
In addition, we have by \eqref{resolvent_dif} that
\[
v_b^{w \, \prime}(b+) = \left( \theta \dfrac{ {C^{(\theta,r)}(b;w)} + \rho_b^{(\theta)}(b;w)}{Z^{(\theta)}(b)}  - w(0) \right) W^{(\theta)}(b)  - \rho_b^{(\theta)}(b;w') = v_b^{w \, \prime}(b-).
\]
On the other hand, if $X$ is of unbounded variation, by \eqref{a001} and Remark \ref{remark_scale_function_properties}(2)
\[
v_b^{w \, \prime\prime}(b+)=\left(\theta \dfrac{{C^{(\theta,r)}(b;w)} + \rho_b^{(\theta)}(b;w)}{Z^{(\theta)}(b)}-w(0)\right)W^{(\theta)\prime}(b) - \int_0^b W^{(\theta)\prime}(b-y) w'_{+}(y) \diff y=v_b^{w \, \prime\prime}(b-).
\]}
\end{remark}
By the smoothness of the scale function, together with Lemma 3.3 from \cite{NobPerYamYan} and Remark \ref{Remark_C1_Resolvent} we obtain the following result.
\begin{lemma}[Smoothness of $v_b$]\label{lem_NPV_smooth}
	For all $b \geq 0$ we have:
	\begin{enumerate}
		\item[(i)] When $X$ has paths of bounded variation, $v_b$ is continuously differentiable on $\R \setminus \{0\}$;
		\item[(ii)] When $X$ has paths of unbounded variation, $v_b$ is twice continuously differentiable on $\R \setminus \{0\}$.
	\end{enumerate}
\end{lemma}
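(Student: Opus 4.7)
The plan is to decompose $v_b$ as $v_b = v_b^{LR} + \lambda v_b^w$ and treat each summand separately, invoking an existing result for the first piece and Remark \ref{Remark_C1_Resolvent} together with the formulas established in Lemma \ref{lem_NPV_diff} for the second piece.

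First I would recall that Lemma 3.3 of \cite{NobPerYamYan} already provides the smoothness statement for $v_b^{LR}$: the map $x \mapsto v_b^{LR}(x)$ is in $\mathcal{C}^1(\mathbb{R}\setminus\{0\})$ in the bounded variation case and in $\mathcal{C}^2(\mathbb{R}\setminus\{0\})$ in the unbounded variation case. Thus it remains to establish the analogous smoothness for the running-cost term $v_b^w$.

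Next I would argue that away from the threshold $b$, smoothness of $v_b^w$ is immediate from Remark \ref{remark_scale_function_properties}(1), since the expression in \eqref{v_w} is an algebraic combination of scale functions and convolution integrals of scale functions against the continuous data $w$, all of which inherit one (respectively two) degrees of differentiability from $W^{(\theta)}$ and $W^{(\theta+r)}$. More precisely, from the explicit formulas \eqref{NPV_diff} and \eqref{NPV_ddiff} in Lemma \ref{lem_NPV_diff}, we see that $v_b^{w\prime}$ is continuous on $\mathbb{R}\setminus\{0,b\}$ in the bounded variation case, and $v_b^{w\prime\prime}$ is continuous there in the unbounded variation case.

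The only delicate point is continuity across the threshold $x=b$, where the functions $W_b^{(\theta,r)}$, $Z_b^{(\theta,r)}$ and $\rho_b^{(\theta,r)}$ change form; this is precisely the content of the matching computation carried out in Remark \ref{Remark_C1_Resolvent}, which gives $v_b^{w\prime}(b+) = v_b^{w\prime}(b-)$ in both cases, and $v_b^{w\prime\prime}(b+) = v_b^{w\prime\prime}(b-)$ in the unbounded variation case (using Remark \ref{remark_scale_function_properties}(2) so that the jump terms at $b$ cancel). Combining the smoothness of $v_b^{LR}$ from \cite{NobPerYamYan} with the smoothness of $v_b^w$ just described yields the conclusion: $v_b \in \mathcal{C}^1(\mathbb{R}\setminus\{0\})$ when $X$ is of bounded variation, and $v_b \in \mathcal{C}^2(\mathbb{R}\setminus\{0\})$ when $X$ is of unbounded variation. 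The main (and only real) obstacle is the matching at $x=b$, but this has already been verified in Remark \ref{Remark_C1_Resolvent}, so the proof is essentially a bookkeeping exercise.
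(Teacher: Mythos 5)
Your proposal is correct and follows essentially the same route as the paper: decompose $v_b = v_b^{LR} + \lambda v_b^w$, cite Lemma 3.3 of \cite{NobPerYamYan} for the first term, and use the smoothness of the scale functions together with the matching computations at $x=b$ in Remark \ref{Remark_C1_Resolvent} for the second. The paper's own proof is exactly this one-line citation of those three ingredients.
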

\subsection{Selection and existence of the optimal barrier}

In this section, we will define and {prove} the existence of {the threshold $b^{*}\geq 0$  under which the strategy} ${\bar{\pi}}^{b^{*}}=(L^{0,b^{*}}_{r}(t),R^{0,b^{*}}_{r}(t))$ is optimal. For that {purpose, we provide} some preliminary results.

{\begin{remark}\label{par_ruin}
\begin{enumerate}
	\item Fix $b \geq 0$. Let
	$U^{b}_{r}$ be the Parisian reflected process of $X$ from above at $b$ (without classical reflection) as given in  Section \ref{sec_snlp_par_ruin}, and 
	\[
	\tau_{0}^-(r):=\inf\{t>0: U^{b}_{r}(t)<0 \}.
	\]
	Then, by Corollary 3 in \cite{PerYam2}, for any $x\in\mathbb{R}$ and $q>0$, 
	\begin{align}\label{403}
	\mathbb{E}_{x}\Big[e^{-q\tau_{0}^-(r)}\Big]
	&=Z_{b}^{(q,r)}(x)-rZ^{(q)}(b)\overline{W}^{(q+r)}(x-b)\\
	&\quad-q\dfrac{Z^{(q)}(b,\Phi(q+r))}{Z^{(q)\prime}(b,\Phi(q+r))}\left( W_{b}^{(q,r)}(x)-rW^{(q)}(b) \overline{W}^{(q+r)}(x-b)\right),\notag
	\end{align}
	where in particular
	\begin{align}\label{fpt_pr}
	\mathbb{E}_b\left[e^{-q\tau_{0}^-(r)}\right]
	&=Z^{(q)}(b) -q\dfrac{Z^{(q)}(b,\Phi(q+r))}{Z^{(q)\prime}(b,\Phi(q+r))}W^{(q)}(b).
	\end{align}
	\item 	{By} (4.1) and Remark 4.1 from \cite{NobPerYamYan}, 
	\begin{equation}\label{eq3}
	v_b^{LR \, \prime}(b) = \dfrac{\theta \left( \beta \E_b \left[ e^{-\theta \tau_{0}^-(r)} \right] -1 \right)}{\Phi(\theta+r) \left( Z^{(\theta)}(b) - \E_b[ e^{ -\theta \tau_{0}^-(r) } ] \right)} W^{(\theta)}(b) + 1.
	\end{equation}
	\end{enumerate}
\end{remark}}

We now provide the following auxiliary result and we defer the proof to Appendix \ref{useful_iden}.
\begin{lemma}\label{useful_iden_lemma}
For $b\geq0$ we have
	\begin{align}\label{r13}
		v_b^{w\,\prime}(b)&=\theta W^{(\theta)}(b) \left( \dfrac{{C^{(\theta,r)}(b;w)}+ \rho_b^{(\theta)}(b;w)}{Z^{(\theta)}(b)} \right) -w(0)W^{(\theta)}(b)-\rho^{(\theta)}_{b}(b;w'_{+}) \\ 
		&=\dfrac{ \theta \E_b \left[\displaystyle \int_0^{\tau_0^-(r)} e^{-\theta t}w'_{+}(U_r^{b}(t)) \diff t \right]}{ \Phi(\theta+r) \left( Z^{(\theta)}(b) - \E_b[ e^{ -\theta \tau_{0}^-(r) } ] \right) } W^{(\theta)}(b).\notag
	\end{align}
\end{lemma}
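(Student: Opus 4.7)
The first equality is just the evaluation of \eqref{resolvent_dif} at $x=b$: using $W_b^{(\theta,r)}(b)=W^{(\theta)}(b)$ and $\rho_b^{(\theta,r)}(b;w'_+)=\rho_b^{(\theta)}(b;w'_+)$ (from \eqref{Rem_identities_simplify}--\eqref{fun_rho_0}), together with the fact that the integral $\int_0^{x-b} w'_+(z+b) W^{(\theta+r)}(x-b-z)\,\diff z$ vanishes at $x=b$, one reads off the stated formula directly.

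For the second equality, the first move is to identify $\E_b\!\bigl[\int_0^{\tau_0^-(r)} e^{-\theta t} w'_+(U_r^{b}(t))\,\diff t\bigr]$ with $\tilde{C}^{(\theta,r)}(b;w'_+)$ by evaluating Proposition \ref{resol_par_ruin} (with $q=\theta$, $h=w'_+$) at $x=b$: the same simplifications as above collapse all terms except $\tilde{C}^{(\theta,r)}(b;w'_+)$. I then use \eqref{eq5} to rewrite the denominator in \eqref{r11a1} as $Z^{(\theta)\prime}(b,\Phi(\theta+r))/[\Phi(\theta+r)W^{(\theta)}(b)]$, and \eqref{fpt_pr} to write $Z^{(\theta)}(b)-\E_b[e^{-\theta\tau_0^-(r)}]=\theta\,Z^{(\theta)}(b,\Phi(\theta+r))W^{(\theta)}(b)/Z^{(\theta)\prime}(b,\Phi(\theta+r))$. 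After cancelling the common factor $Z^{(\theta)\prime}(b,\Phi(\theta+r))$, the right-hand side of the second equality reduces to
\[
\dfrac{W^{(\theta)}(b)\,\Xi^{(\theta,r)}(b;w'_+)}{Z^{(\theta)}(b,\Phi(\theta+r))}-\rho_b^{(\theta)}(b;w'_+).
\]
In parallel, applying integration by parts to $\int_b^\infty e^{-\Phi(\theta+r)(y-b)}Z^{(\theta)}(y)\,\diff y$ and using \eqref{def_z_nuevo}, the denominator in \eqref{e1} equals $\theta Z^{(\theta)}(b,\Phi(\theta+r))/[\Phi(\theta+r)Z^{(\theta)}(b)]$, so the first expression for $v_b^{w\,\prime}(b)$, divided by $W^{(\theta)}(b)$, rewrites as $[\Phi(\theta+r)\Xi^{(\theta,r)}(b;w)-r\rho_b^{(\theta)}(b;w)]/Z^{(\theta)}(b,\Phi(\theta+r))-w(0)-\rho_b^{(\theta)}(b;w'_+)/W^{(\theta)}(b)$.

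The main obstacle is then the algebraic identity
\[
\Xi^{(\theta,r)}(b;w'_+) = \Phi(\theta+r)\,\Xi^{(\theta,r)}(b;w) - r\rho_b^{(\theta)}(b;w) - w(0)\,Z^{(\theta)}(b,\Phi(\theta+r)),
\]
which, when inserted into the previous display, makes the two reductions coincide. The plan to establish it is to integrate by parts three times in the definition \eqref{e4}: first on $\int_0^\infty w(y+b)e^{-\Phi(\theta+r)y}\,\diff y$ to introduce $w'_+$; second on $\int_b^\infty e^{-\Phi(\theta+r)(y-b)}\rho_b^{(\theta)}(y;w)\,\diff y$ to extract the boundary term $r\rho_b^{(\theta)}(b;w)$ and reduce the tail to an integral of $\partial_y\rho_b^{(\theta)}(y;w)$; and third inside $\partial_y\rho_b^{(\theta)}(y;w)=\int_0^b W^{(\theta)\prime}(y-z)w(z)\,\diff z$ to express it as $-W^{(\theta)}(y-b)w(b)+W^{(\theta)}(y)w(0)+\rho_b^{(\theta)}(y;w'_+)$. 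The two Laplace-transform identifications $r\int_0^\infty e^{-\Phi(\theta+r)u}W^{(\theta)}(u)\,\diff u=1$ (from \eqref{scale_function_laplace} and $\psi_X(\Phi(\theta+r))=\theta+r$) and $r\int_0^\infty e^{-\Phi(\theta+r)u}W^{(\theta)}(u+b)\,\diff u=Z^{(\theta)}(b,\Phi(\theta+r))$ (from \eqref{def_z_nuevo}) then produce, respectively, the cancellation of the boundary $w(b)$ contributions and the final term $w(0)\,Z^{(\theta)}(b,\Phi(\theta+r))$, yielding the required identity and completing the proof.
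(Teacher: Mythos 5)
Your proposal is correct and follows essentially the same route as the paper's proof: the first equality is indeed just \eqref{resolvent_dif} at $x=b$, and the second is obtained from the same ingredients (\eqref{eq5}, \eqref{fpt_pr}, \eqref{r11a1}, \eqref{def_z_nuevo}, \eqref{scale_function_laplace}) together with the integration-by-parts identities (the paper's \eqref{use_1}--\eqref{use_3}) relating $\Xi^{(\theta,r)}(b;w)$, $\Xi^{(\theta,r)}(b;w'_+)$, $\rho_b^{(\theta)}(b;w)$, $w(0)$, and $Z^{(\theta)}(b,\Phi(\theta+r))$. The only difference is organizational: you reduce both sides to a common intermediate expression and isolate the key algebraic identity, whereas the paper substitutes forward from the first expression; the underlying computations are identical.
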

{Using that $v_{b}'(b)=v_b^{LR \, \prime}(b)+\lambda v_b^{w\,\prime}(b)$ together with} \eqref{eq3} and \eqref{r13}, gives 
\begin{equation}\label{NPV_diff_b}
	v_b'(b) = \dfrac{\theta}{\Phi(\theta+r)} \dfrac{G(b)}{Z^{(\theta)}(b) - \E_b\left[ e^{-\theta \tau_{0}^{-}(r)} \right]} W^{(\theta)}(b)  + 1,
\end{equation}
where
\begin{align}\label{root_fun}
	G(b) &:= \beta - 1 - \E_b \left[ \int_0^{\tau_0^-(r)} e^{-\theta t} \left( \beta \theta - \lambda w'_{+}(U_r^{b}(t)) \right) \diff t \right]\\
	&=\left( \beta  Z^{(\theta)}(b) - 1 + \lambda \tilde{C}^{(\theta,r)}(b;w'_{+}) \right) - \beta \theta W^{(\theta)}(b)\dfrac{Z^{(\theta)}(b,\Phi(\theta+r))}{Z^{(\theta)\prime}(b,\Phi(\theta+r))}\quad b \geq 0,\notag
\end{align}
where the last equality is true because of \eqref{r12aa} {(taking $w'_+$ instead of $h$) and \eqref{fpt_pr}}. We propose as candidate for the optimal barrier 
\begin{align}\label{opt_barr}
	b^*:=\inf\{b\geq0: G(b)\leq 0\}.
\end{align}
{In the next result we provide a necessary and sufficient condition for the optimal barrier $b^*$ to be $0$.}
\begin{proposition}\label{prop_existence} We have that $0 \leq b^* < \infty$. Moreover, we have that $b^* = 0$ if and only if $X$ has paths of bounded variation and
	\begin{equation}\label{cond_exist_opt}
	\beta - 1 \leq \dfrac{1}{c}\left( \dfrac{1}{\Phi(\theta + r) - \dfrac{r}{c}} \right) \left( \theta\beta - \lambda \Phi(\theta + r) \int_0^{\infty} e^{-\Phi(\theta+r)z} w'_{+}(z) \diff z \right).
	\end{equation}
\end{proposition}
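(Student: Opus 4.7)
The plan is to prove the two claims separately by analysing the root function $G$ in \eqref{root_fun}. For finiteness of $b^*$, I would show $\lim_{b\to\infty}G(b)<0$; for the characterisation $b^*=0$, I would evaluate $G(0)$ and discriminate between the bounded and unbounded variation cases, relying on the regularity of $0$ for $(-\infty,0)$ and on the explicit formulas from Section~\ref{sec_NPV_Scale}. Continuity of $b\mapsto G(b)$ (immediate from its representation via scale functions and Remark~\ref{remark_scale_function_properties}) will then allow us to pass from a sign of $G$ at a specific point to a conclusion about the infimum $b^*$.

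To establish $b^*<\infty$, I would work with the probabilistic representation
\[
G(b)=\beta-1-\Em_b\!\left[\int_0^{\tau_0^-(r)} e^{-\theta t}\bigl(\beta\theta-\lambda w'_+(U_r^{b}(t))\bigr)\diff t\right].
\]
By the spatial homogeneity of $X$, under $\mathbb{P}_b$ the process $U_r^{b}$ has the same law as $b+\widetilde{U}_r$ under $\mathbb{P}_0$, where $\widetilde{U}_r$ is $X$ with Parisian reflection from above at level $0$; accordingly $\tau_0^-(r)$ corresponds to the first downcrossing time $\tilde{\tau}_{-b}$ of $-b$ by $\widetilde{U}_r$, which tends to $\infty$ almost surely as $b\to\infty$. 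Since $w$ is concave with $w'_+(\infty)\in[0,1]$ and $w'_+(0+)\le\beta$, the integrand $\beta\theta-\lambda w'_+(\,\cdot\,)$ is non-negative and bounded by $\beta\theta$ on $[0,\infty)$; dominated convergence then yields
\[
\lim_{b\to\infty}G(b)=\beta-1-\int_0^{\infty}e^{-\theta t}\bigl(\beta\theta-\lambda w'_+(\infty)\bigr)\diff t=\frac{\lambda w'_+(\infty)}{\theta}-1\le\frac{\lambda}{\theta}-1=-\frac{q}{\theta}<0.
\]
Hence $\{b\ge0:G(b)\le 0\}$ is non-empty and $b^*<\infty$.

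For the second statement, note that $b^*=0$ if and only if $G(0)\le 0$, by continuity of $G$. If $X$ has paths of unbounded variation then $0$ is regular for $(-\infty,0)$, so $\tau_0^-(r)=0$ $\Prob_0$-a.s.\ and the integral term in $G(0)$ vanishes, giving $G(0)=\beta-1>0$; consequently $b^*>0$. If instead $X$ has paths of bounded variation, I would evaluate $G(0)$ using the second expression in \eqref{root_fun}. Plugging in the well-known values $Z^{(\theta)}(0)=1$, $W^{(\theta)}(0)=1/c$ (Remark~\ref{remark_scale_function_properties}(2)), $Z^{(\theta)}(0,\Phi(\theta+r))=1$ (from \eqref{def_z_nuevo} and \eqref{scale_function_laplace}), and $Z^{(\theta)\prime}(0,\Phi(\theta+r))=\Phi(\theta+r)-r/c$ (from \eqref{eq5}), together with the simplification of $\tilde{C}^{(\theta,r)}(0;w'_+)$ obtained by observing that $\rho^{(\theta)}_0(0;w'_+)=0$ and $\Xi^{(\theta,r)}(0;w'_+)=\int_0^{\infty}e^{-\Phi(\theta+r)y}w'_+(y)\diff y$, one reaches
\[
G(0)=\beta-1-\frac{1}{c}\cdot\frac{1}{\Phi(\theta+r)-r/c}\left(\beta\theta-\lambda\Phi(\theta+r)\int_0^{\infty}e^{-\Phi(\theta+r)z}w'_+(z)\diff z\right),
\]
and the asserted equivalence with \eqref{cond_exist_opt} is immediate.

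\textbf{Main obstacle.} The delicate step is the limit $b\to\infty$: it requires the spatial homogeneity reduction together with an honest application of dominated convergence, and hinges on the fact that $w'_+$ is bounded on $[0,\infty)$ (a consequence of concavity and Assumption~\ref{assum_w}) and on the $\Prob_0$-a.s.\ divergence of $\tilde{\tau}_{-b}$ as $b\to\infty$, which in turn follows from the càdlàg paths of $\widetilde{U}_r$ being locally bounded. The bounded-variation computation of $G(0)$ is straightforward algebra once the correct evaluations of the scale function objects at zero are assembled.
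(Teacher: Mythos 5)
Your proof is correct and follows essentially the same strategy as the paper: analyse the root function $G$ from \eqref{root_fun}, establish $\lim_{b\to\infty}G(b)<0$ via spatial homogeneity and dominated convergence, then discriminate between the bounded and unbounded variation cases at $b=0$. Two small variations are worth noting. First, the paper's proof begins by showing that $G$ is non-increasing (since $w$ concave makes $b\mapsto\beta\theta-\lambda w'_+(b)$ non-decreasing), which in particular yields uniqueness of the root; you bypass monotonicity and rely only on continuity of $G$, which is sufficient for the proposition as stated (non-emptiness of $\{b:G(b)\le 0\}$, plus $G(0)\le 0\iff b^*=0$), though you lose the uniqueness statement the paper observes along the way. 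Second, in the unbounded variation case the paper computes $\lim_{b\downarrow 0}G(b)=\beta-1$ algebraically by noting $W^{(\theta)}(0+)=0$ in the scale-function expression, whereas you argue probabilistically from regularity of $0$ for $(-\infty,0)$ that $\tau_0^-(r)=0$ $\Prob_0$-a.s.\ and hence the integral term in $G(0)$ vanishes; both give $G(0)=\beta-1>0$, and your version is arguably cleaner since it avoids dealing with the ratio $Z^{(\theta)}(b,\Phi(\theta+r))/Z^{(\theta)\prime}(b,\Phi(\theta+r))$ in the limit. The bounded-variation computation is identical in substance.
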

\begin{proof}
	Due to Assumption \ref{assum_w} we have that $b \mapsto \beta \theta - \lambda w'_{+}(b)$ is non-decreasing, hence 
	\[
	b \mapsto \E_b \left[ \int_0^{\tau_0^-(r)} e^{-\theta t} \left( \beta \theta - \lambda w'_{+}(U_r^{b}(t)) \right) \diff t \right] 
	\]
	 is non-decreasing as well. It follows that {the mapping $b\mapsto G (b)$} is non-increasing.
	
	
	On the other hand, due to spatial homogeneity of L\'evy processes, we have that the $\lbrace U_r^b (t) ; t \leq \tau_0^-(r) \rbrace$ started at $U_r^b(0) = b$ is equal in law to $\lbrace b + U_r^0 ; t \leq \tilde{\tau}_{-b}^-(r) \rbrace$ started at $U_r^0(0) = 0$, where $\tilde{\tau}_{-b}^-(r) := \inf \lbrace t \geq 0: U^{0}_r(t) < -b \rbrace$. Then, by {dominated convergence} we have
	\begin{align}
		\lim_{b \uparrow \infty} G(b) &= \beta-1- \lim_{b \uparrow \infty} \E_0 \left[ \int_0^{\infty} e^{-\theta t} 1_{\lbrace t \leq \tilde{\tau}_{-b}^-(r) \rbrace} \left[\beta\theta-\lambda w'_{+}(b + U^{0}_r(t))\right] \diff t \right] = \dfrac{\lambda}{\theta} w'_{+}(+\infty) - 1 < 0. \label{opt_lim1}
	\end{align}
Now,  using \eqref{root_fun}, we obtain
\begin{align*}
	\lim_{b \downarrow 0} G(b) &= \beta \left( 1 - \theta W^{(\theta)}(0+) \dfrac{1}{\Phi(\theta+r) - r W^{(\theta)}(0+)} \right) - 1 \\
	& \quad +  \dfrac{ \lambda\Phi(\theta+r) W^{(\theta)}(0+)}{\Phi(\theta+r) - r W^{(\theta)}(0+)} \int_0^{\infty} e^{-\Phi(\theta+r)z} w'_{+}(z) \diff z ,
\end{align*}
where we have used {that} $\lim_{b \downarrow 0} \rho_b^{(\theta)}(b;w'_{+}) = 0$  and $\lim_{b \downarrow 0} \Xi^{(\theta,r)}(b;w'_{+}) = \int_{0}^{\infty}e^{-\Phi(q+r)y}w'_{+}(z)\diff z$. Hence, we get the following cases:
\begin{enumerate}
	\item If $X$ has paths of unbounded variation, then $\lim_{b \downarrow 0} G(b) = \beta - 1 > 0$. Thus, there exists a unique $b^* > 0$ such that $G(b^*) = 0$.
	\item If $X$ has paths of bounded variation we have
	\[
	\lim_{b \downarrow 0} G(b) = \beta - 1 - \dfrac{1}{c}\left( \dfrac{1}{\Phi(\theta + r) - \dfrac{r}{c}} \right) \left( \theta \beta - \lambda \Phi(\theta + r) \int_0^{\infty} e^{-\Phi(\theta+r)z} w'_{+}(z) \diff z \right).
	\]
	Thus, if \eqref{cond_exist_opt} does not hold, then there  exists a unique $b^* > 0$ such that $G(b^*) = 0$; otherwise, if \eqref{cond_exist_opt} holds, we set 
	$b^* = 0$.
\end{enumerate}
\end{proof}
\section{Verification of Optimality}\label{verification_aux}
We shall show the optimality of the periodic-classical barrier strategy $\bar{\pi}^{0,b^*}$, where the barrier $b^*$ is defined by \eqref{opt_barr}.
\begin{theorem}\label{thm_optim_aux}
	The strategy $\bar{\pi}^{0,b^*}$ is optimal and the value function of the problem \eqref{control:value} is given by $v = v_{b^*}$.
\end{theorem}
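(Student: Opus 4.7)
The plan is to carry out a verification argument based on the HJB variational inequality associated with the problem \eqref{control:value}. Writing $\gen$ for the infinitesimal generator of $X$, a candidate value function $f$ should satisfy
\begin{align*}
(\gen - \theta) f(x) + r \max_{\ell \in [0,x]} [f(x-\ell) + \ell - f(x)] + \lambda w(x) &\leq 0,\\
f'_+(x) &\leq \beta,
\end{align*}
for every $x > 0$. If $f$ is smooth enough to admit an appropriate Itô formula, a standard argument then yields $f(x) \geq v_\pi(x)$ for every $\pi \in \A$. Since $\bar{\pi}^{0,b^*}$ is admissible by Proposition \ref{NPV_aux} and its expected NPV is precisely $v_{b^*}$, it is enough to show that $v_{b^*}$ fulfils these two inequalities and then run the verification.

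Lemma \ref{lem_NPV_smooth} supplies the required smoothness, so the Itô step is justified. I would verify the integro-differential inequality region by region. On $[0, b^*]$ the smooth-pasting identity \eqref{NPV_diff_b} together with the monotonicity of $b \mapsto G(b)$ from Proposition \ref{prop_existence} should yield $v_{b^*}'(x) \geq 1$, making $\ell = 0$ the maximizer in the periodic dividend operator; the inequality then reduces to $(\gen - \theta) v_{b^*} + \lambda w = 0$, which follows from the resolvent representation in Proposition \ref{resol_par} and the formula \eqref{Aux_NPV}. For $x > b^*$, the maximizer is $\ell = x - b^*$ provided one has the symmetric inequality $v_{b^*}'(x) \leq 1$ on $(b^*, \infty)$, so the inequality reads
\[
(\gen - \theta - r) v_{b^*}(x) + r[v_{b^*}(b^*) + (x - b^*)] + \lambda w(x) \leq 0,
\]
which upon subtracting the equation known on $[0,b^*]$ reduces to another sign statement on $v_{b^*}'(x) - 1$. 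The injection inequality $v_{b^*}'(x) \leq \beta$ should follow from the bound $w'_+(0+) \leq \beta$ in Assumption \ref{assum_w}, concavity of $v_{b^*}$ that can be extracted from \eqref{NPV_ddiff}, and the fact that $v_{b^*}'(b^*) = 1 \leq \beta$.

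With the variational inequalities in hand, I would apply the appropriate semimartingale Itô formula to $e^{-\theta t} v_{b^*}(U_r^\pi(t))$ for an arbitrary $\pi \in \A$. After compensating the Poisson dividend jumps at rate $r$, the drift of
\[
e^{-\theta t} v_{b^*}(U_r^\pi(t)) + \int_{[0,t]} e^{-\theta s} \diff L_r^\pi(s) - \beta \int_{[0,t]} e^{-\theta s} \diff R_r^\pi(s) + \lambda \int_0^t e^{-\theta s} w(U_r^\pi(s)) \diff s
\]
is non-positive because of the two inequalities. Standard localization, followed by Fatou and the admissibility constraint \eqref{admissibility2}, gives after $t \to \infty$ the bound $v_{b^*}(x) \geq v_\pi(x)$; the identity $\mathbb{E}_x\bigl[\int_0^\infty e^{-\theta s}\lambda w(U_r^\pi(s))\diff s\bigr] = \mathbb{E}_x\bigl[e^{-q\zeta} w(U_r^\pi(\zeta))\bigr]$, obtained by Fubini over the independent exponential $\zeta$, identifies the left-hand side with $v_\pi(x)$. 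Combined with $v_{\bar{\pi}^{0,b^*}} = v_{b^*}$, this gives $v = v_{b^*}$.

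The main obstacle is the sign verification $v_{b^*}'(x) \leq 1$ for $x \geq b^*$ (and symmetrically $\geq 1$ on $[0,b^*]$). Via \eqref{NPV_diff_b} these reduce to a global sign statement on a function obtained by differentiating the scale-function expression \eqref{Aux_NPV} in $x$ while fixing the barrier at $b^*$; propagating the pointwise information $G(b^*) \leq 0$ provided by the definition \eqref{opt_barr} into this global sign statement requires a careful parametric argument that leverages the monotonicity of $G$ together with the sign structure of the scale-function ratios appearing in \eqref{NPV_diff}.
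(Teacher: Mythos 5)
Your overall plan matches the paper's: verify the HJB variational inequalities of Lemma \ref{prop_HJB_aux} for $v_{b^*}$, using the smoothness from Lemma \ref{lem_NPV_smooth}, and then conclude. You also correctly single out the critical technical step.

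However, you leave that step — the slope estimates $1 \leq v_{b^*}'(x) \leq \beta$ on $(0,b^*)$ and $0 \leq v_{b^*}'(x) \leq 1$ on $(b^*,\infty)$ — as an admitted gap, and the route you sketch for closing it (``a careful parametric argument that leverages the monotonicity of $G$ together with the sign structure of the scale-function ratios'') is not the paper's argument and is not obviously workable: the monotonicity of $G$ in the barrier variable $b$ does not directly control the behaviour of $x \mapsto v_{b^*}'(x)$ at a fixed $b^*$. What the paper actually does in Lemma \ref{lem_HJB_2} is to manipulate the scale-function expression \eqref{NPV_diff}, use the fluctuation identity \eqref{403} for the Parisian-reflected process $U_r^{b^*}$, and the defining relation $G(b^*)=0$, to arrive at the compact probabilistic representation
\[
v_{b^*}'(x) = \beta - \E_x\!\left[\int_0^{\tau_0^-(r)} e^{-\theta t}\bigl(\beta\theta - \lambda w_+'(U_r^{b^*}(t))\bigr)\diff t\right].
\]
From this formula everything follows at once: by Assumption \ref{assum_w} the integrand is non-negative and non-decreasing in the state, so by spatial homogeneity $v_{b^*}'$ is non-increasing in $x$; the inequality $v_{b^*}'(0+) \leq \beta$ follows because $w_+' \leq \beta$; and $v_{b^*}'(b^*)=1$ holds by the choice $G(b^*)=0$. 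Note also that this argument covers the bounded-variation case where $v_{b^*}''$ is unavailable — your proposal to extract concavity ``from \eqref{NPV_ddiff}'' would only apply in the unbounded-variation case. Finally, your worry that on $(b^*,\infty)$ one is ``left with another sign statement on $v_{b^*}'(x)-1$'' is unnecessary: Lemma \ref{lem_HJB_1} shows the generator identity \eqref{HJB1} holds with equality on both regions, so once Lemma \ref{lem_HJB_3} identifies the maximizer, the first HJB inequality is an equality everywhere and no further sign check is required there. One more point: the paper handles $b^* = 0$ separately (Lemma \ref{lem_HJB_2}, case (ii)) with a distinct representation in terms of the classically reflected process, which your sketch does not account for.
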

Let $\gen$ be the infinitesimal generator associated with the process $X$ applied to a
{$\mathcal{C}^1(0,\infty)$ (resp., $\mathcal{C}^2(0,\infty)$)} function $f$ for the case $X$ is of bounded (resp., unbounded) variation:
\begin{align}\label{diff1}
	\gen f(x)\eqdef \gamma f'(x)+\dfrac{1}{2}\eta^{2}f''(x)+\int_{(-\infty,0)}[f(x+z)-f(x)-f'(x)z1_{\{-1<z<0\}}]\Pi(\diff z)\ \quad{for}\ x>0.
\end{align}

{In the next result we provide a verification lemma. The proof is essentially the same as Proposition 5.1 in \cite{NobPerYamYan} (which deals with the case in which the payoff function $w$ is equal to zero), and thus we omit it.} 

Throughout the rest of this section we extend the domain of $v_{\pi}$ to $\R$ by setting $v_{\pi}(x) = \beta x + v_{\pi}(0+)$ for $x < 0$.
\begin{lemma}[Verification lemma]\label{prop_HJB_aux}
	Suppose that $\hat{\pi} \in \A$ is such that $v_{\hat{\pi}}\in\mathcal{C}(\R)\cap\mathcal{C}^1((0,\infty))$ (respectively, $\mathcal{C}^{1}(\R)\cap\mathcal{C}^2((0,\infty))$) for the case that $X$ has paths of bounded (respectively, unbounded) variation. In addition, suppose that
	\begin{equation}\label{HJB_aux}
		\begin{split}
		(\gen - \theta) v_{\hat{\pi}}(x) + r \max_{0 \leq l \leq x} \lbrace l + v_{\hat{\pi}}(x-l) - v_{\hat{\pi}}(x) \rbrace + \lambda w(x) &\leq 0, \qquad x \geq 0,\\ v'_{\hat{\pi}}(x) &\leq \beta, \qquad x \geq 0, \\
		\inf_{x \geq 0} v_{\hat{\pi}}(x) &> -m \text{ for some } m > 0.
		\end{split}
	\end{equation}
	Then $\hat{\pi}$ is an optimal strategy and $v_{\hat{\pi}}(x) = v(x)$ for all $x \geq 0$. 
\end{lemma}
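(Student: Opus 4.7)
The plan is to fix an arbitrary $\pi\in\mathcal{A}$ with controlled process $U_r^{\pi}=X-L_r^{\pi}+R_r^{\pi}$ and set
\begin{align*}
F(x):=\max_{0\leq l\leq x}\{\,l+v_{\hat\pi}(x-l)-v_{\hat\pi}(x)\,\},\qquad x\geq 0.
\end{align*}
I want to show $v_{\pi}(x)\leq v_{\hat\pi}(x)$; since $\hat\pi\in\mathcal{A}$ the reverse inequality is automatic, yielding $v=v_{\hat\pi}$. First I rewrite the terminal-time objective via the standard killed-process identity $\mathbb{E}_{x}[e^{-q\zeta}f(U(\zeta))]=\lambda\mathbb{E}_{x}[\int_0^{\infty}e^{-\theta s}f(U(s))\,ds]$ (the reformulation employed just after \eqref{v_pi2}) to reduce to an infinite-horizon integral with effective discount $\theta=q+\lambda$ and running payoff $\lambda w$.

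The core step is to apply the change-of-variables formula for semimartingales to $e^{-\theta t}v_{\hat\pi}(U_r^{\pi}(t))$. The assumed smoothness of $v_{\hat\pi}$, together with the linear extension $v_{\hat\pi}(x)=\beta x+v_{\hat\pi}(0+)$ for $x<0$, ensures $\mathcal{C}^{1}$-smoothness across the origin (and $\mathcal{C}^{2}$ on $(0,\infty)$ in the unbounded-variation case), so Itô's / Meyer--Itô's formula applies. The decomposition yields: a drift integral $\int_0^{t}e^{-\theta s}(\mathcal{L}-\theta)v_{\hat\pi}(U_r^{\pi}(s-))\,ds$; a local martingale $M_{t}$ arising from the compensated Lévy jumps and the Brownian component; a capital-injection piece $\int_{[0,t]}e^{-\theta s}v_{\hat\pi}'(U_r^{\pi}(s-))\,dR_r^{\pi,c}(s)+\sum_{s\leq t,\Delta R^{\pi}(s)>0}e^{-\theta s}[v_{\hat\pi}(U_r^{\pi}(s))-v_{\hat\pi}(U_r^{\pi}(s-))]$ which, using $v_{\hat\pi}'\leq\beta$ on $\mathbb{R}$, is bounded above by $\beta\int_{[0,t]}e^{-\theta s}dR_r^{\pi}(s)$; and the Poisson-time dividend jump sum $\sum_{s\in\mathcal{T}_r,\,s\leq t}e^{-\theta s}[v_{\hat\pi}(U_r^{\pi}(s-)-\nu^{\pi}(s))-v_{\hat\pi}(U_r^{\pi}(s-))]$.

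I now apply the three inequalities in \eqref{HJB_aux}. The generator bound gives $(\mathcal{L}-\theta)v_{\hat\pi}\leq -rF-\lambda w$. For the Poisson-time sum I write $v_{\hat\pi}(U-\nu)-v_{\hat\pi}(U)=-\nu+[\,\nu+v_{\hat\pi}(U-\nu)-v_{\hat\pi}(U)\,]$: the first term integrates to $-\int_{[0,t]} e^{-\theta s}\,dL_r^{\pi}(s)$, while the bracket is bounded by $F(U_r^{\pi}(s-))$ by the very definition of $F$. Compensating this Poisson sum produces $r\int_0^{t}e^{-\theta s}F(U_r^{\pi}(s-))\,ds$ plus a further local martingale, and the two $rF$ contributions coming from the generator bound and the Poisson compensator cancel exactly. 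Assembling everything and rearranging produces, for some local martingale $N$,
\begin{align*}
\int_{[0,t]}e^{-\theta s}dL_r^{\pi}(s)-\beta\int_{[0,t]}e^{-\theta s}dR_r^{\pi}(s)+\lambda\int_0^{t}e^{-\theta s}w(U_r^{\pi}(s))ds\leq v_{\hat\pi}(x)-e^{-\theta t}v_{\hat\pi}(U_r^{\pi}(t))+N_{t}.
\end{align*}

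To finish I localize with $\tau_{n}:=\inf\{t: U_r^{\pi}(t)>n\}\wedge n$ so that $N_{\cdot\wedge\tau_{n}}$ is a true martingale, take expectations, and pass to the limits $n\to\infty$ and $t\to\infty$. Assumption \ref{assump_1_aux}, the admissibility condition \eqref{admissibility2}, the concavity of $w$ (so $0\leq w'_{+}\leq\beta$ and $|w(x)|\leq |w(0)|+\beta x$), and the linear-growth upper bound $v_{\hat\pi}(x)\leq v_{\hat\pi}(0+)+\beta x$ implied by $v_{\hat\pi}'\leq\beta$ provide the dominating integrands needed for monotone and dominated convergence on each side. The hypothesis $\inf v_{\hat\pi}>-m$ gives $\liminf_{t\to\infty} e^{-\theta t}v_{\hat\pi}(U_r^{\pi}(t))\geq 0$ pointwise, which upon taking expectations yields $v_{\pi}(x)\leq v_{\hat\pi}(x)$, as desired. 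The principal technical obstacle is precisely this localization-to-true-martingale step: one must control the compensated large-jump martingale of $X$ and the Poisson-compensator martingale uniformly along $\tau_{n}$, which is achieved by combining the linear growth of $v_{\hat\pi}$ with \eqref{admissibility2} and Assumption \ref{assump_1_aux}; a secondary point is the Meyer--Itô application at $x=0$ in the unbounded-variation case, handled by the slope-$\beta$ extension which kills any local-time correction or makes it enter with a favorable sign.
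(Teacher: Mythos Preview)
Your proof is correct and follows the same standard verification-lemma route that the paper invokes: the paper omits the argument entirely, referring to Proposition~5.1 of \cite{NobPerYamYan} (the $w\equiv 0$ case), which proceeds exactly as you do---apply It\^o/Meyer--It\^o to $e^{-\theta t}v_{\hat\pi}(U_r^{\pi}(t))$, bound the generator term by $-rF-\lambda w$, the capital-injection increments by $\beta\,dR$, the Poisson-dividend jumps by $F$, compensate the Poisson sum so the two $rF$ terms cancel, then localize and pass to the limit using the lower bound on $v_{\hat\pi}$ and the linear growth furnished by $v_{\hat\pi}'\leq\beta$. Your handling of the extra $\lambda w$ running payoff via the $\theta$-discount reformulation is exactly the adaptation the paper has in mind.
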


{Notice that if $v_{b^{*}}$ satisfies the variational inequalities \eqref{HJB_aux}, then the strategy $\bar{\pi}^{0,b^{*}}$ is optimal, due to the previous lemma. To show this, we shall provide some preliminary results.}
\begin{lemma}\label{lem_HJB_1}
For $b \geq 0$, we have
\begin{equation}\label{HJB1}
(\gen - \theta) v_b(x) =
\begin{cases}
	-\lambda w(x), & \text{if } x \in (0,b), \\
	-r\lbrace (x-b) + v_b(b) - v_b(x) \rbrace - \lambda w(x), & \text{if } x \in [b,\infty).
\end{cases}
\end{equation}
\end{lemma}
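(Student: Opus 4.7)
The plan is to exploit the decomposition $v_b = v_b^{LR} + \lambda\, g^{(\theta)}(\,\cdot\,;w)$ used in Proposition \ref{NPV_aux} and establish the generator equation for each summand separately, then sum the two to recover \eqref{HJB1}. For $v_b^{LR}$ the analogous identity
\[
(\gen - \theta)\,v_b^{LR}(x) \;=\; -r\,\mathbf{1}_{\{x\geq b\}}\bigl[(x-b) + v_b^{LR}(b) - v_b^{LR}(x)\bigr], \qquad x > 0,
\]
is the content of the HJB verification in \cite{NobPerYamYan} (which treats the same problem with $w\equiv 0$). It therefore suffices to prove the companion identity
\[
(\gen - \theta)\,g^{(\theta)}(x;w) \;=\; -w(x) - r\,\mathbf{1}_{\{x\geq b\}}\bigl[g^{(\theta)}(b;w) - g^{(\theta)}(x;w)\bigr], \qquad x > 0,
\]
and then multiply by $\lambda$ and add.

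My preferred route for the companion identity is probabilistic. By construction $g^{(\theta)}(\,\cdot\,;w)$ is the $\theta$-potential of the strong Markov process $U_r^{0,b}$ applied to $w$, so the strong Markov property yields that
\[
M(t) \;:=\; e^{-\theta t}\, g^{(\theta)}\bigl(U_r^{0,b}(t);w\bigr) + \int_0^t e^{-\theta s}\, w\bigl(U_r^{0,b}(s)\bigr)\,\diff s
\]
is a local martingale. On $(0,\infty)\setminus\{b\}$ the extended generator of $U_r^{0,b}$ acts on a sufficiently regular function $f$ as $\gen f(x) + r\,\mathbf{1}_{\{x>b\}}\bigl[f(b) - f(x)\bigr]$, because the Parisian observation mechanism produces a rate-$r$ jump from $x$ to $b$ precisely when $x>b$, while the classical reflection at $0$ enters only as a Neumann-type boundary condition and does not affect the interior generator. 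Applying It\^o's formula to $e^{-\theta t} g^{(\theta)}(U_r^{0,b}(t);w)$ and forcing the finite-variation part of $M$ to vanish delivers the companion identity on $(0,\infty)\setminus\{b\}$, and the continuity of $v_b$ at $x=b$ (Lemma \ref{lem_NPV_smooth}) extends it across the barrier.

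The main technical obstacle is the rigorous application of It\^o's formula across the Poissonian jump times and at the reflection boundary $\{0\}$: one must check that $g^{(\theta)}(\,\cdot\,;w)$ lies in the domain of the extended generator of $U_r^{0,b}$. The regularity of $v_b$ on $\R\setminus\{0\}$ supplied by Lemma \ref{lem_NPV_smooth}, combined with a standard localisation argument (stopping before capital-injection excursions become large and truncating the support of $w$), is what handles this. A purely computational alternative that sidesteps the domain bookkeeping is to verify the identity term by term starting from the explicit formula in Proposition \ref{resol_par}, exploiting $(\gen-\theta)W^{(\theta)}(x)=0$, $(\gen-\theta)Z^{(\theta)}(x)=0$ and $(\gen-\theta)\overline{Z}^{(\theta)}(x)=\psi_X'(0+)$ for $x>0$, together with their extensions to $W_b^{(\theta,r)}$, $Z_b^{(\theta,r)}$, $\overline{Z}_b^{(\theta,r)}$ and $\rho_b^{(\theta,r)}(\,\cdot\,;w)$ induced by the convolution identities in \eqref{identities}; the two cases $x\in(0,b)$ and $x>b$ then separate cleanly, with the cross-terms produced on $(b,\infty)$ assembling precisely into the Parisian reflection contribution $-r\bigl[(x-b)+v_b(b)-v_b(x)\bigr]$.
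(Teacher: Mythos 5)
Your proposal is correct in outline and takes a genuinely different primary route from the paper. You share the paper's decomposition $v_b = v_b^{LR} + \lambda v_b^w$ and the citation of \cite{NobPerYamYan} for the $v_b^{LR}$ part; the divergence is in how you handle $v_b^w$. The paper proceeds entirely by computation with scale functions: on $(0,b)$ it uses $(\gen-\theta)Z^{(\theta)}=0$ and $(\gen-\theta)\rho_b^{(\theta)}(\,\cdot\,;w)=w$ together with the reduction \eqref{Rem_identities_simplify}, while on $(b,\infty)$ it differentiates the explicit formula \eqref{v_w} term by term using \eqref{HJB5}--\eqref{HJB8}. Your primary route instead identifies $g^{(\theta)}(\,\cdot\,;w)$ as a $\theta$-potential of $U_r^{0,b}$, invokes the resulting martingale, and reads off the generator equation; this is cleaner conceptually but pushes the work into verifying that $g^{(\theta)}(\,\cdot\,;w)$ lies in the domain of the extended generator and that the reflection at $0$ is correctly absorbed. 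You acknowledge the obstacle, but two points deserve sharpening: first, $M$ is a genuine martingale, not merely local, since it is the conditional expectation $\E_x[\int_0^\infty e^{-\theta s} w(U(s))\,\diff s\mid\mathcal{H}_t]$; second, the assertion that the classical reflection at $0$ enters only as a Neumann condition is accurate in the unbounded-variation case (where $v_b^{w\prime}(0+)=0$ because $W^{(\theta)}(0)=0$), but in the bounded-variation case the boundary effect is instead encoded by extending $g^{(\theta)}(\,\cdot\,;w)$ by the constant $g^{(\theta)}(0+;w)$ on $\R_-$ inside the nonlocal integral of $\gen$ — exactly the extension the paper's formula \eqref{v_w} produces automatically. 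Your stated computational alternative is essentially the paper's proof, so as a fallback you are on firm ground; the payoff of the probabilistic route is that it explains the form of \eqref{HJB1} transparently, whereas the paper's route has the advantage of never needing domain bookkeeping for the extended generator.
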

\begin{proof}
	First, from Lemma 5.1 in \cite{NobPerYamYan} we have
	\begin{equation}\label{HJB_LR}
		(\gen - \theta) v^{LR}_b(x) =
		\begin{cases}
			0, & \text{if } x \in (0,b), \\
			-r\lbrace (x-b) + v^{LR}_b(b) - v^{LR}_b(x) \rbrace , & \text{if } x \in [b,\infty).
		\end{cases}
	\end{equation}
It remains to analyse the term $(\gen - \theta) v^{w}_b(x)$ {for $x\in(0,\infty)$}. \\
(i) Suppose $0 < x < b$. {By the proof of Theorem 2.1 in \cite{BKY}} it follows that
\begin{equation}\label{HJB2}
(\gen - \theta) Z^{(\theta)}({x}) = 0, \quad {x} > 0.
\end{equation}
In addition, {the proof of Lemma 4.5 of \cite{EgaYam} implies}
\begin{equation}\label{HJB3}
	(\gen - \theta) \rho^{(q)}_{b}(x;w) = w(x), \quad 0 < x < b. 
\end{equation}
By combining \mazenta{\eqref{v_w},} \eqref{HJB2} and \eqref{HJB3}, we obtain
\begin{equation}\label{HJB4}
	(\gen - \theta) v^{w}_b(x) = -w(x), \quad 0 < x < b.
\end{equation}
(ii) {Now, assume that} $x > b$. From (5.4) and (5.6) in \cite{NobPerYamYan} we have
\begin{align}
\begin{split}
	(\gen - \theta)& \overline{W}^{(\theta+r)}(x-b) = 1 + r\overline{W}^{(\theta+r)}(x-b), \\
	(\gen - \theta)& Z_b^{(\theta,r)}(x) = rZ_b^{(\theta,r)}(x). \label{HJB5}
	\end{split}
\end{align}
On the other hand, from the proof of Lemma 4.5 of \cite{EgaYam} we have
\begin{align*}
(\gen - (\theta + r))\int_0^{x-b} W^{(\theta + r)}(x-b-y) w(y+b) \diff y = w(x).
\end{align*}
Hence{
\begin{equation}\label{HJB7}
	(\gen - \theta) \left(\int_0^{x-b} W^{(\theta + r)}(x-b-y) w(y+b) \diff y \right) = w(x) + r \int_0^{x-b} W^{(\theta + r)}(x-b-y) w(y+b) \diff y.
\end{equation}}
In a similar way, we obtain {by \eqref{fun_rho_0}
\begin{align*}
(\gen - \theta) \left(\int_b^x W^{(\theta + r)}(x-y) \rho^{(\theta)}_b(y;w) \diff y \right) &= \rho^{(\theta)}_b(x;w)+r\int_b^x W^{(\theta + r)}(x-y) \rho^{(\theta)}_b(y;w) \diff y\notag\\&=\rho_b^{(\theta,r)}(x;w).
\end{align*}}
{By the proof of Lemma 4 in \cite{AvrPalPis} we have that  $(\gen - \theta) W^{(\theta)}(x-y)=0$ for $x>y>0$, then by dominated convergence we get
\[
(\gen - \theta) \rho^{(\theta)}_b(x;w) = (\gen - \theta)\int_0^bW^{(\theta)}(x-y)w(y) \diff y=0, \quad x > b.
\]
Therefore, using \eqref{fun_rho_0}}
\begin{equation}\label{HJB8}
	(\gen - \theta) \rho_b^{(\theta,r)}(x;w) = r \rho_b^{(\theta,r)}(x;w).
\end{equation}
Finally, by {\eqref{v_w} together with} \eqref{HJB5}--\eqref{HJB8} we obtain
\begin{align}\label{HJB_w}
	(\gen - \theta) v^{w}_b(x) &= r \left( \dfrac{{C^{(\theta,r)}(b;w)} + \rho_b^{(\theta)}(b;w)}{Z^{(\theta)}(b)} \right) Z_b^{(\theta,r)}(x)  - r {C^{(\theta,r)}(b;w)}(1 + r\overline{W}^{(\theta+r)}(x-b))\\
	& \quad - r \rho_b^{(\theta,r)}(x;w)- \left(  w(x) + r \int_0^{x-b} W^{(\theta + r)}(x-b-y) w(y+b) \diff y \right) \notag \\
	& = -r \left( v^{w}_b(b) - v^{w}_b(x) \right) - w(x).  \notag 
\end{align}
The result follows by combining \eqref{HJB_LR}, \eqref{HJB4} and \eqref{HJB_w}.
\end{proof}

\begin{lemma}\label{lem_HJB_2}
We have $1 \leq v'_{b^*}(x) \leq \beta$ for $x \in (0, b^*)$ and $0 \leq v'_{b^*}(x) \leq 1$ for $x \in (b^*, \infty)$.
\end{lemma}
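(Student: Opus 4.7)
The strategy is to combine a smooth pasting identity at the barrier $b^{*}$ with the concavity of $v_{b^{*}}$ on $(0,\infty)$ and a careful boundary analysis at $0$ and at $\infty$. Throughout, the bound on $(0,b^{*})$ is vacuous in the case $b^{*}=0$, so I may assume $b^{*}>0$ for that part.

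\emph{Step 1 (Smooth pasting at $b^{*}$).} When $b^{*}>0$, by the definition \eqref{opt_barr} together with the continuity and monotonicity of $G$ established in Proposition \ref{prop_existence}, I obtain $G(b^{*})=0$. Substituting into the identity \eqref{NPV_diff_b} yields $v'_{b^{*}}(b^{*})=1$ immediately.

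\emph{Step 2 (Concavity of $v_{b^{*}}$ on $(0,\infty)$).} I would show that $v'_{b^{*}}$ is non-increasing on $(0,\infty)$. In the unbounded variation case this amounts to verifying $v''_{b^{*}}(x)\le 0$ on $(0,b^{*})\cup(b^{*},\infty)$ directly from \eqref{NPV_ddiff}. The argument splits into two pieces: the $v^{LR}_{b^{*}}$ contribution, whose non-positivity is already known from the analogous analysis in \cite{NobPerYamYan}, and the $\lambda v^{w}_{b^{*}}$ contribution, whose non-positivity I would extract from the concavity of $w$ (Assumption \ref{assum_w}), which makes $w'_{+}$ non-increasing, by carefully regrouping the integrals $\int_{0}^{b}W^{(\theta)\prime}(x-y)w'_{+}(y)\diff y$ and $\int_{b}^{x}W^{(\theta+r)\prime}(x-y)\rho^{(\theta)}_{b}(y;w'_{+})\diff y$. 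The bounded variation case is treated by a direct monotonicity analysis of \eqref{NPV_diff} with the same ingredients.

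\emph{Step 3 (Boundary behaviour).} Evaluating \eqref{NPV_diff} at $x=0+$ using the simplifications \eqref{Rem_identities_simplify} (namely $W^{(\theta,r)}_{b^{*}}(0+)=W^{(\theta)}(0+)$, $Z^{(\theta,r)}_{b^{*}}(0+)=1$, $\overline{W}^{(\theta+r)}(-b^{*})=0$ and $\rho^{(\theta,r)}_{b^{*}}(0+;w'_{+})=0$), I get $v'_{b^{*}}(0+)=\beta$ when $X$ has unbounded variation (since $W^{(\theta)}(0+)=0$), and in the bounded variation case the analogous computation together with the explicit expressions of $C^{1}_{b^{*}}$ in \eqref{v2} and $C^{(\theta,r)}(b^{*};w)$ in \eqref{e1} gives $v'_{b^{*}}(0+)\le \beta$. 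At infinity, dividing \eqref{NPV_diff} by the dominant exponentially growing factor $W^{(\theta+r)}(x-b^{*})$ and using the standard asymptotics $W^{(\theta+r)}(x)\sim e^{\Phi(\theta+r)x}/\psi'_{X}(\Phi(\theta+r))$ as $x\to\infty$, I obtain $\lim_{x\to\infty} v'_{b^{*}}(x)=\frac{\lambda}{\theta}w'_{+}(\infty)\ge 0$ by Assumption \ref{assum_w}.

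\emph{Step 4 (Conclusion).} Combining: on $(0,b^{*})$, the non-increasing property of $v'_{b^{*}}$ together with $v'_{b^{*}}(0+)\le \beta$ and $v'_{b^{*}}(b^{*})=1$ gives $1\le v'_{b^{*}}(x)\le \beta$; on $(b^{*},\infty)$, the same property together with $v'_{b^{*}}(b^{*})=1$ and $v'_{b^{*}}(\infty)\ge 0$ gives $0\le v'_{b^{*}}(x)\le 1$.

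\emph{Main obstacle.} The genuine difficulty is Step 2. The expression for $v''_{b^{*}}$ is a sum of many terms with mixed signs, and showing their net non-positivity requires carefully isolating the already-known $v^{LR}_{b^{*}}$ piece from \cite{NobPerYamYan} and then exploiting the concavity of $w$ through the inner integrals defining $\rho^{(\theta)}_{b}(\cdot;w'_{+})$ and $C^{(\theta,r)}(b;w)$. If the direct computation proves too unwieldy, a backup approach is a pathwise/coupling argument: start the Parisian-classical reflected process from two initial points $x_{1}<x_{2}$, couple the two trajectories, and use the concavity of $w$ together with the monotone structure of the reflection maps to establish $v'_{b^{*}}$-monotonicity probabilistically rather than analytically.
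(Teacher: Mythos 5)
Your Step~1 matches the paper, and your Step~4 conclusion scheme (monotone derivative plus boundary values) is the right structure. But the core of the argument, Step~2, contains a genuine gap that your plan does not address, and the paper takes a different and more effective route there.

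The decomposition $v_{b^*}'' = (v^{LR}_{b^*})'' + \lambda (v^{w}_{b^*})''$ with a term-by-term sign analysis cannot work. Neither piece is individually concave for the barrier $b^*$ chosen here: the barrier in \cite{NobPerYamYan} making $(v^{LR}_{b})''\le 0$ is the root of a \emph{different} equation (it does not involve $w$), whereas your $b^*$ is the root of $G(b)=0$ from \eqref{root_fun}, which couples the $LR$ and $w$ contributions. Indeed, $v_b$ is not concave for generic $b$ (e.g.\ \eqref{NPV_diff_b} gives $v_b'(b)>1$ for $b<b^*$, while $v_b'(0+)\le\beta$ still holds, so the derivative need not be monotone), and \eqref{NPV_ddiff} holds for all $b$, so a direct sign inspection of \eqref{NPV_ddiff} can never succeed without injecting the optimality condition $G(b^*)=0$. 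Your plan nowhere explains how this condition enters the second-derivative estimate. This is exactly the missing ingredient: the paper never estimates $v''_{b^*}$ at all. Instead it substitutes $G(b^*)=0$, in the form
\[
\dfrac{\beta Z^{(\theta)}(b^*) - 1 + \lambda \tilde{C}^{(\theta,r)}(b^*;w'_{+}) }{W^{(\theta)}(b^*)} = \beta \theta \dfrac{Z^{(\theta)}(b^*;\Phi(\theta+r))}{Z^{(\theta)\prime}(b^*;\Phi(\theta+r))},
\]
directly into \eqref{NPV_diff} and, invoking \eqref{403} and Proposition~\ref{resol_par_ruin}, arrives at the probabilistic identity
\[
v'_{b^*}(x) = \beta - \E_x \Big[ \int_0^{\tau_0^-(r)} e^{-\theta t} \big( \beta \theta - \lambda w'_{+}(U_r^{b^*}(t)) \big) \diff t \Big].
\]
From this formula, non-negativity ($w'_+\ge 0$), the bound $v'_{b^*}\le\beta$ ($w'_+\le\beta$), and monotonicity (concavity of $w$ plus spatial homogeneity) are all immediate, with no second-derivative estimates at all. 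Your ``backup'' coupling idea is actually closer in spirit to this representation than your primary plan, but it too is only sketched.

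Two secondary points. First, your asymptotic $\lim_{x\to\infty}v'_{b^*}(x)=\tfrac{\lambda}{\theta}w'_+(\infty)$ is incorrect when $b^*>0$: because $U_r^{b^*}$ is reset to $b^*$ at the first qualifying Poisson time, the limit involves a recursion through $\E_{b^*}[\cdot]$ and is not $\tfrac{\lambda}{\theta}w'_+(\infty)$; fortunately the paper does not need the limit for $b^*>0$ since $v'_{b^*}\ge 0$ follows directly from the representation. Second, you dismiss $b^*=0$ as vacuous, but only the lower interval is vacuous; you still must show $0\le v_0'\le 1$ on $(0,\infty)$, and the paper handles this case by a separate explicit computation reducing to $W^{(\theta+r)}$-identities and the classical resolvent formulas \eqref{fpt0}, \eqref{rlp}, which your plan does not cover.
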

\begin{proof}
(i) Suppose that $b^* > 0$. {First we note that using \eqref{NPV_diff} and the choice of the optimal threshold $b^*$ (such that $v_{b^*}(b^*)=1$; see \eqref{NPV_diff_b}) we obtain
\begin{align}\label{der_AUX_NPV}
	v_{b^*}'(b^*) &= - \theta C^1_{b^{*}} W^{(\theta)}(b^*) + \beta Z^{(\theta)}(b^*) \\
	&\quad+ \lambda \left[\left( \theta \dfrac{{C^{(\theta,r)}(b^*;w)} + \rho_{b^{*}}^{(\theta)}(b^*;w)}{Z^{(\theta)}(b^*)} -w(0)\right)W^{(\theta)}(b^*) -   \rho^{(\theta)}_{b^*}(b^*;w'_{+}) \right]=1 .\notag
\end{align}
Then, using \eqref{der_AUX_NPV} in \eqref{NPV_diff} implies}
\[
\begin{split}
	v'_{b^*}(x) &=
	 W_{b^*}^{(\theta,r)}(x) \left( -\theta C^1_{b^*} + \lambda \left(\theta \dfrac{{C^{(\theta,r)}(b^*;w)} + \rho_{b^*}^{(\theta)}(b^*;w)}{Z^{(\theta)}(b^*)} -w(0) \right) \right) \\ 
	 &\quad- r \overline{W}^{(\theta + r)}(x-{b^*}) + \beta Z_{b^*}^{(\theta,r)}(x)  \\
	&\quad+ \lambda \left[ -  \int_0^{x-{b^*}} w'_{+}(z+{b^*}) W^{(r+\theta)}(x-{b^*}-z) \diff z - \rho_{b^*}^{(\theta,r)}(x;w'_{+}) \right]  \\
	& = W_{b^*}^{(\theta,r)}(x) \left( \dfrac{1 - \beta Z^{(\theta)}(b^*)}{W^{(\theta)}(b^*)} \right)  - r \overline{W}^{(\theta + r)}(x-{b^*}) + \beta Z_{b^*}^{(\theta,r)}(x) \\
	&\quad  + \lambda \left[  W_{b^*}^{(\theta,r)}(x) \dfrac{\rho^{(\theta)}(b^*;w'_{+})}{W^{(\theta)}(b^*)}  -  \int_0^{x-{b^*}} w'_{+}(z+{b^*}) W^{(r+\theta)}(x-{b^*}-z) \diff z - \rho_{b^*}^{(\theta,r)}(x;w'_{+}) \right],
\end{split}
\]
{Now, using \eqref{r12aa} and rearranging terms we have}
\[
\begin{split}
	v'_{b^*}(x) &= \beta Z_{b^*}^{(\theta,r)}(x) - r \beta Z^{(\theta)}(b^*) \overline{W}^{(\theta + r)}(x-{b^*}) \\
	& \quad - \left( \dfrac{\beta Z^{(\theta)}(b^*) - 1 + \lambda \tilde{C}^{(\theta,r)}(b^*;w'_{+}) }{W^{(\theta)}(b^*)} \right) \left( W_{b^*}^{(\theta,r)}(x) - r W^{(\theta)}(b^*) \overline{W}^{(\theta + r)}(x-{b^*}) \right) \\
	&\quad + \lambda \E_x \left[ \int_0^{\tau_0^-(r)} e^{-\theta t} w'_{+}(U_r^{b^*}(t)) \diff t\right].
\end{split}
\]
{From expression \eqref{root_fun}} and the fact that $G(b^*) = 0$, we get
\[
\dfrac{\beta Z^{(\theta)}(b^*) - 1 + \lambda \tilde{C}^{(\theta,r)}(b^*;w'_{+}) }{W^{(\theta)}(b^*)} = \beta \theta \dfrac{Z^{(\theta)}(b^*;\Phi(\theta+r))}{Z^{(\theta)\prime}(b^*;\Phi(\theta+r))}.
\]
Hence, using \eqref{403}, we have
\begin{align}
v'_{b^*}(x) &= \beta \E_x\left[ e^{-\theta \tau_0^-(r)} \right] + \lambda \E_x \left[ \int_0^{\tau_0^-(r)} e^{-\theta t} w'_{+}(U_r^{b^*}(t)) \diff t\right]\notag \\
&= \beta - \E_x \left[ \int_0^{\tau_0^-(r)} e^{-\theta t} \left( \beta \theta - \lambda w'_{+}(U_r^{b^*}(t)) \right) \diff t \right].\label{n2}
\end{align}
From Assumption \ref{assum_w} and \eqref{n2}, it follows that $v'_{b^*}$ is non-negative and non-increasing on $(0,\infty)$. On the other hand, \eqref{n2} yields that  $v'_{b^{*}}(0)\leq\beta$ due to the fact that $w'_{+}\leq\beta$ on $(0,\infty)$. Therefore $0 \leq v'_{b^*}(x) \leq \beta $. This, and the fact that $v'_{b^*}(b^*) = 1$ {completes} the proof.

(ii) Suppose that $b^* = 0$, where necessarily $X$ has paths of bounded variation and \eqref{cond_exist_opt} holds. From \eqref{v2}, we have the following
\begin{equation}\label{constant_c_01}
C^1_0 = \dfrac{r (\beta - 1) + \theta \beta}{\theta \Phi(\theta + r)},
\end{equation}
 {Additionally, using \eqref{r12aa} we note
\begin{align}\label{resol_par_0_0}
\E_0 \left[ \int_0^{\tau_0^-(r)} e^{-\theta t} w'_{+}(U_r^{0}(t)) \diff t\right]=\tilde{C}^{(\theta,r)}(0;w'_{+})=\frac{\Phi(\theta+r)W^{(\theta)}(0+)}{\Phi(r+\theta)-rW^{(\theta)}(0+)}\int_0^{\infty}e^{-\Phi(r+\theta)y}w'_+(y)\diff y.
\end{align}
Hence, using \eqref{r13} and \eqref{resol_par_0_0} we obtain}
\begin{equation}\label{const_c_02}
\theta  {C^{(\theta,r)}(0;w)} - w(0) = \int_0^{\infty} e^{-\Phi(\theta+r)z} w'_{+}(z) \diff z.
\end{equation}
 {Then, by using \eqref{constant_c_01} and \eqref{const_c_02} in \eqref{NPV_diff} and applying \eqref{Rem_identities_simplify.0} we have
\begin{align}\label{deriv_v_0_x}
v_0'(x) &= 
  - \dfrac{r (\beta - 1) + \theta \beta}{ \Phi(\theta + r)} W^{(\theta + r)}(x) - r \overline{W}^{(\theta+r)}(x) + \beta Z^{(\theta+r)}(x) \\
& \quad + \lambda \left[ W^{(\theta + r)}(x) \int_0^{\infty} e^{-\Phi(\theta+r)z} w'_{+}(z) \diff z - \int_0^x w'_{+}(z) W^{(\theta+r)}(x-z) \diff z \right] \notag \\
&= \dfrac{r (\beta - 1) + \theta \beta}{\theta + r} \left( Z^{(\theta+r)}(x) - \dfrac{\theta+r}{\Phi(\theta+r)}W^{(\theta+r)}(x) \right) + \dfrac{r}{\theta+r}  \notag\\
&  + \lambda \left[ W^{(\theta + r)}(x) \int_0^{\infty} e^{-\Phi(\theta+r)z} w'_{+}(z) \diff z - \int_0^x w'_{+}(z) W^{(\theta+r)}(x-z) \diff z \right]  \notag \\
&= \dfrac{r (\beta - 1) + \theta \beta}{\theta + r} \E_x\left[ e^{-(\theta+r)\tau_0^-} \right] + \dfrac{r}{\theta+r}   + \lambda \E_x \left[ \int_0^{ \tau_0^-} e^{-(\theta+r) t} w'_{+}(X(t)) \diff t \right],\notag 
\end{align}
where in the last equality we used \eqref{fpt0} and \eqref{rlp}.}

Note that the mapping $x \mapsto \E_x\left[ e^{-(\theta+r)\tau_0^-} \right]$ is non-increasing, and since $w$ is concave we have that the mapping $x \mapsto \lambda \E_x \left[ \int_0^{ \tau_0^-} e^{-(\theta+r) t} w'_{+}(X(t)) \diff t \right]$ is non-increasing as well; hence  {$v_0$} is concave.

 {On the other hand, as $b^*=0$ it follows from \eqref{NPV_diff_b} that
\begin{equation}\label{der_aux_v_0_0}
	v_0'(0) = \dfrac{\theta}{\Phi(\theta+r)} \dfrac{G(0)}{1 - \E_b\left[ e^{-\theta \tau_{0}^{-}(r)} \right]} W^{(\theta)}(0)  + 1\leq 1,
\end{equation}
where the inequality follows from the fact that $G(0)\leq0$ as a result of \eqref{cond_exist_opt}. Therefore, \eqref{der_aux_v_0_0} and the concavity of $v_0$ in $(0,\infty)$ imply that}  $v_0'(x) \leq 1$ for all $x \geq 0$. Finally,  {from \eqref{deriv_v_0_x}}, we note that $v_0'(x) \rightarrow \dfrac{r + \lambda w'(+\infty)}{\theta + r} > 0$ as $x \uparrow \infty$,  hence $v_0'(x) > 0$ for all $x \geq 0$.
\end{proof}
 {By applying Lemma \ref{lem_HJB_2},} the following result is immediate.
\begin{lemma}\label{lem_HJB_3}
For $b^* \geq 0$ we have
\begin{equation}
	\max_{0 \leq l \leq x} \lbrace l + v_{b^*}(x-l) - v_{b^*}(x) \rbrace = 
	\begin{cases}
		0, & \text{if } x \in [0,b^*], \\
		 x-b^* + v_{b^*}(b^*) - v_{b^*}(x) , & \text{if } x \in (b^*,\infty).
	\end{cases}
\end{equation}
\end{lemma}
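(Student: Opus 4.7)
The plan is to reduce the pointwise maximum problem to a one-dimensional calculus exercise on the auxiliary function $f(l) := l + v_{b^*}(x-l) - v_{b^*}(x)$ defined on $[0,x]$, and exploit the slope information that Lemma \ref{lem_HJB_2} provides about $v_{b^*}'$ on the two regions $(0,b^*)$ and $(b^*,\infty)$. By Lemma \ref{lem_NPV_smooth}, $v_{b^*}$ is continuously differentiable on $(0,\infty)$, so $f$ is continuous on $[0,x]$ and differentiable on $(0,x)\setminus\{x-b^*\}$ with $f'(l) = 1 - v_{b^*}'(x-l)$. The whole claim is then driven by the sign of this derivative on the appropriate subintervals.

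For the case $x \in [0,b^*]$, whenever $l \in (0,x)$ we have $x - l \in (0,b^*)$, and Lemma \ref{lem_HJB_2} gives $v_{b^*}'(x-l) \geq 1$. Thus $f'(l) \leq 0$ on $(0,x)$, so $f$ is non-increasing on $[0,x]$ and attains its maximum at $l = 0$ with value $f(0) = 0$.

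For the case $x \in (b^*,\infty)$, I would split $[0,x]$ at the natural candidate $l^* := x - b^*$. On $(0, l^*)$, the argument $x - l$ lies in $(b^*, x)$, so by Lemma \ref{lem_HJB_2} we get $v_{b^*}'(x-l) \leq 1$, hence $f'(l) \geq 0$ and $f$ is non-decreasing. On $(l^*, x)$, the argument $x - l$ lies in $(0, b^*)$, so $v_{b^*}'(x-l) \geq 1$ and $f'(l) \leq 0$, making $f$ non-increasing. Consequently the maximum of $f$ on $[0,x]$ is attained at $l = l^* = x - b^*$, yielding the claimed value $(x - b^*) + v_{b^*}(b^*) - v_{b^*}(x)$.

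The argument is essentially routine once Lemma \ref{lem_HJB_2} is in hand; no delicate step is involved beyond noting that continuity of $f$ at the (at most countably many) points where $v_{b^*}'$ may fail to be defined (namely $0$, and the junction $x - l = b^*$ in the bounded-variation case) preserves the monotonicity conclusion on the closed intervals. The edge case $b^* = 0$ is covered automatically: the first region collapses to $\{0\}$ where the claim is trivial, and on $(0,\infty)$ Lemma \ref{lem_HJB_2}(ii) delivers $v_{b^*}'(x) \leq 1$, so $l^* = x$ and the maximum equals $x + v_{b^*}(0) - v_{b^*}(x)$, matching the formula.
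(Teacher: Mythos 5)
Your proof is correct and is precisely the elaboration of what the paper intends: the paper states the lemma as an immediate consequence of Lemma \ref{lem_HJB_2} without detail, and your argument simply makes explicit the sign analysis of $f'(l)=1-v_{b^*}'(x-l)$ on the two subintervals that the slope bounds $v_{b^*}'\geq 1$ on $(0,b^*)$ and $v_{b^*}'\leq 1$ on $(b^*,\infty)$ deliver.
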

Finally, we  {provide the proof of} Theorem \ref{thm_optim_aux}.
\begin{proof}[Proof of Theorem \ref{thm_optim_aux}.] We shall show that $v_b$ satisfies the conditions from Lemma \ref{prop_HJB_aux}. As a consequence of Lemma \ref{lem_NPV_smooth}, we have that $v_{b^*}\in\mathcal{C}(\R)\cap\mathcal{C}^{1}((0,\infty))$ if $X$ has bounded variation paths ($v_{b^*}\in\mathcal{C}^{1}(\R)\cap\mathcal{C}^{2}((0,\infty))$ if $X$ has unbounded variation paths)  . Hence, it remains to prove the variational inequalities  {given in} \eqref{HJB_aux}.
	
The first item in \eqref{HJB_aux} holds with equality due to Lemmas \ref{lem_HJB_1} and \ref{lem_HJB_3}. The second item follows from Lemma \ref{lem_HJB_2}. Finally, as a consequence of Lemma \ref{lem_HJB_2}, $v_{b^*}$ is  non-decreasing and it follows  {from Assumption \ref{assump_1_aux}} that $\inf_{x \geq 0} v_{b^*}(x) =v_{b^{*}}(0) > -\infty$.
\end{proof}
\section{Optimal Strategies for Poissonian dividend problems with Regime Switching}\label{sec_optimal_MAPS}
\subsection{Iteration algorithm to compute the value function}
We will show that the net present value $V_{\pi^{0,\bb}}$, given by \eqref{va1}, of a Parisian-classical reflection strategy at levels $\bb = (b(i))_{i \in E}$ and at 0, respectively, solves a fixed point equation. 

We consider the space of functions
\begin{align*}
\B:=\{f : f(\cdot,i) \in \mathcal{C}([0,\infty))\ \text{and}\ \|f\|<\infty \ \text{for}\ i \in E\},
\end{align*}
where $\| f \| := \max_{i \in E} \sup_{x \geq 0} \dfrac{|f(x,i)|}{1 + |x|} $.

For $f: [0,\infty) \times E \rightarrow \R$ we define $\hat{f}: [0,\infty) \times E \rightarrow \R$ as
\begin{equation}\label{Expectation_Transform}
	\hat{f}(x,i) := \sum_{j \neq i} \dfrac{\lambda_{ij}}{{{\lambda}}_i} \int_{(-\infty,0)} \left[ \left( \beta(x+y) + f(0,j) \right) 1_{ \lbrace x + y < 0 \rbrace } + f(x+y,j) 1_{\lbrace x + y \geq 0 \rbrace} \right] \diff F_{ij}(y),
\end{equation}
where $F_{ij}$ denotes the distribution function of the random variable $J_{ij}$ for $i,j \in E$, and $\lambda_i := \sum_{j \neq i} \lambda_{ij}$.
\begin{remark}\label{bound_tilde_f}
	Note that for $(x,i) \in [0,\infty) \times E$ we have
	\[
	\begin{split}
		\dfrac{ |\hat{f}(x,i)| }{1 + |x|} &\leq \sum_{j \neq i} \dfrac{\lambda_{ij}}{{\lambda}_i} \left[ (\beta+|f(0,j)|)+ \dfrac{\beta\E[|J_{ij}|]}{1+|x|} + \|f\|\int_{(-\infty,0)} \dfrac{1+|x+y|}{ 1+|x| }1_{\{x+y\geq0\}} \diff F_{ij}(y) \right] \\
		& \leq \sum_{j \neq i} \dfrac{\lambda_{ij}}{{\lambda}_i} \left[(\beta+|f(0,j)|)+ \beta\E[|J_{ij}|]+\|f\|\right].
	\end{split}
	\]
	Hence, if $f \in \B$, we have that $\hat{f} \in \B$ as well.
\end{remark}
Given $\mathbf{b} = (b(i))_{i \in E}\in\mathcal{E}$, where $\mathcal{E}$ denotes the space of functions from $E$ to $[0,\infty)$, we define the following operator acting on $\B$
{
\begin{align}
	T_{\bb} f(x,i) :  
	= \E^i_x\Bigg[ \int_{[0,\infty)}& e^{-\theta(i)t} \diff L_i^{0,b(i)}(t) 
	\\&- \beta  \int_{[0,\infty)} e^{-\theta(i)t} \diff R_i^{0,b(i)}(t) 
	+ \lambda_{i} \int_0^{\infty}e^{-\theta(i)t} \hat{f}(U_{r,i}^{0,b(i)}(t),i)\diff t \Bigg].\notag
\label{Recursion_Operator}
\end{align}}
where $\theta(i)=q(i)+\lambda_i$ and $U_{r,i}^{0,b(i)}$ denotes the process with Parisian-classical reflection at the thresholds $b(i)$ and 0, respectively, driven by $X^i$; and $L_i^{0,b(i)}, R_i^{0,b(i)}$ are the cumulative dividend payments and capital injections, respectively.

\begin{proposition}\label{Prop_FixedPoint} For 
	 $\bb\in\mathcal{E}$, and $(x,i) \in [0,\infty) \times E$ we have
	\[
	V_{\pi^{0,\bb}}(x,i) = T_{\bb} V_{\pi^{0,\bb}}(x,i). 
	\]
\end{proposition}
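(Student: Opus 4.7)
The plan is to prove the fixed-point identity by conditioning on the epoch $\zeta$ of the first regime change, then invoking the strong Markov property of the MAP at $\zeta$ together with the independence of $\zeta$, $(J_{ij})$ and the component $X^i$ that drives the surplus on $[0,\zeta)$.

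First I would observe that, under $\Prob_{(x,i)}$, on $[0,\zeta)$ the background chain stays in state $i$, so the discount factor is $e^{-q(i)t}$, and the controlled surplus $U_r^{0,\bb}$ coincides with the periodic–classical barrier process $U_{r,i}^{0,b(i)}$ driven by $X^i$, with cumulative dividends $L_i^{0,b(i)}$ and capital injections $R_i^{0,b(i)}$. Writing the expected NPV as the sum of the pre-$\zeta$ running contributions and an $e^{-q(i)\zeta}$-discounted value at $\zeta$, the strong Markov property at $\zeta$ yields
\begin{align*}
V_{\pi^{0,\bb}}(x,i)
&= \Em_{(x,i)}\Bigl[\int_{[0,\zeta)} e^{-q(i)t}\,\diff L_i^{0,b(i)}(t)
-\beta\int_{[0,\zeta)} e^{-q(i)t}\,\diff R_i^{0,b(i)}(t)\\
&\qquad\qquad{} + e^{-q(i)\zeta}\,\Psi\bigl(U_{r,i}^{0,b(i)}(\zeta-)\bigr)\Bigr],
\end{align*}
where $\Psi(u)$ denotes the conditional expected value immediately after $\zeta$ given $U_r^{0,\bb}(\zeta-)=u$.

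Next I would compute $\Psi$. At $\zeta$ the chain switches from $i$ to $j$ with probability $\lambda_{ij}/\lambda_i$, and an additional non-positive jump $J_{ij}$ occurs, so the surplus becomes $u+J_{ij}$. If $u+J_{ij}<0$, admissibility forces an instantaneous capital injection of size $-(u+J_{ij})$ at cost $\beta$ per unit, pushing the process to $0$; otherwise no injection is needed. Using the strong Markov property a second time to attach the continuation value $V_{\pi^{0,\bb}}(\cdot,j)$ after the jump, one checks (averaging over $j$ and $J_{ij}$) that $\Psi(u)=\hat{V}_{\pi^{0,\bb}}(u,i)$ with $\hat{\cdot}$ as in \eqref{Expectation_Transform}. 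Combining this with the previous display gives
\begin{align*}
V_{\pi^{0,\bb}}(x,i)
&= \E_x^{i}\Bigl[\int_{[0,\zeta)} e^{-q(i)t}\,\diff L_i^{0,b(i)}(t)
-\beta\int_{[0,\zeta)} e^{-q(i)t}\,\diff R_i^{0,b(i)}(t)
+ e^{-q(i)\zeta}\,\hat V_{\pi^{0,\bb}}\!\bigl(U_{r,i}^{0,b(i)}(\zeta-),i\bigr)\Bigr],
\end{align*}
where $\zeta\sim\text{Exp}(\lambda_i)$ is independent of $X^i$ and of the Poisson dividend clock.

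Finally, I would integrate out $\zeta$ using Fubini and the density $\lambda_i e^{-\lambda_i t}$: the factor $\Prob(\zeta>t)=e^{-\lambda_i t}$ converts $\int_{[0,\zeta)}e^{-q(i)t}\diff A(t)$ to $\int_{[0,\infty)}e^{-\theta(i)t}\diff A(t)$ for $A\in\{L_i^{0,b(i)},R_i^{0,b(i)}\}$, while $\E^i_x[e^{-q(i)\zeta}\hat V_{\pi^{0,\bb}}(U_{r,i}^{0,b(i)}(\zeta-),i)]$ equals $\lambda_i\int_0^\infty e^{-\theta(i)t}\E^i_x[\hat V_{\pi^{0,\bb}}(U_{r,i}^{0,b(i)}(t),i)]\diff t$ (the $\zeta-$ versus $\zeta$ distinction is immaterial under the $\diff t$ integration since $U_{r,i}^{0,b(i)}$ has at most countably many jumps). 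Matching the resulting expression with the definition of $T_\bb$ gives $V_{\pi^{0,\bb}}(x,i)=T_\bb V_{\pi^{0,\bb}}(x,i)$.

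The main obstacle I foresee is not the conditioning itself but the bookkeeping needed to justify Fubini and the strong Markov application: one must verify $V_{\pi^{0,\bb}}\in\B$ so that $\hat V_{\pi^{0,\bb}}$ is well defined, and that the integrals of $\diff L_i^{0,b(i)}$, $\diff R_i^{0,b(i)}$ and of $\hat V_{\pi^{0,\bb}}(U_{r,i}^{0,b(i)}(t),i)$ against $e^{-\theta(i)t}$ are absolutely integrable. For the first two this follows from Proposition \ref{NPV_aux} together with Assumption \ref{assump_1} applied regime-wise; for the third, the linear-growth bound on $\hat V_{\pi^{0,\bb}}$ (Remark \ref{bound_tilde_f}, using $\max_{i,j}\E[|J_{ij}|]<\infty$ from Assumption \ref{assump_2}) combined with the finiteness of $\E^i_x[U_{r,i}^{0,b(i)}(t)]$, which again follows from the explicit NPV of the capital-injection process, supplies the required integrability and legitimises all the interchanges above.
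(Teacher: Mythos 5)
Your proposal is correct and follows essentially the same route as the paper: decompose at the first regime-switch epoch $\zeta$ via the strong Markov property, average over the post-$\zeta$ state and the jump $J_{ij}$ to identify the continuation value with $\hat{V}_{\pi^{0,\bb}}$, and then integrate out the exponential $\zeta$ to match the definition of $T_{\bb}$. The extra paragraph you add on integrability (Fubini justification and linear growth of $\hat{V}_{\pi^{0,\bb}}$) is a reasonable supplement that the paper leaves implicit.
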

\begin{proof}
	We denote by $\zeta$ to the epoch of the first regime switch. Hence, by an application of the strong Markov property, we obtain
	\begin{align*}
		 V_{\pi^{0,\bb}} (x,i) &=  {\Em}_{(x,i)} \left[ \int_{[0, \infty)} e^{-I(t)}\diff L_r^{0,\bb}(t) -  \beta\int_{[0, \infty)} e^{-I(t)} \diff R_r^{0,\bb}(t)\right]\\
		&=  {\Em}_{(x, i)} \bigg{[}\int_{[0, \zeta)} e^{- q(i) t}\diff L_{i}^{0,b(i)}(t)
		-\beta \int_{[0, \zeta)} e^{-q(i) t}\diff R_{i}^{0,b(i)}(t)
		+e^{-q (i) \zeta }\bigg[\beta (U_{r,i}^{0,b(i)}(\zeta-)+J_{iH(\zeta)})\notag\\
		& \quad+  {\Em}_{(0, H({\zeta}))} \bigg{[}\int_{[0, \infty)} e^{-  I(t)}\diff L_r^{0,\bb}(t)
		-\beta\int_{[0, \infty)} e^{- I(t)} \diff R_r^{0,\bb}(t) \bigg{]}\bigg]1_{\{U_{r,i}^{0,b(i)}(\zeta-)< -J_{iH(\xi))}\}}\notag\\
		& \quad+ 
		e^{-q(i)\zeta}  {\Em}_{(U_{r}^{0,\bb}(\zeta), H({\zeta}))} \bigg[\int_{[0, \infty)} e^{-  I(t)}\diff L_r^{0,\bb}(t)-\beta\int_{[0, \infty)} e^{- I(t)} \diff R_r^{0,\bb}(t) \bigg{]}1_{\{U_{r,i}^{0,b(i)}(\zeta-)\geq -J_{iH(\xi)}\}}\bigg{]}
					\end{align*}
	\begin{align}\label{fix-point_1}
		& =  {\Em}_{(x, i)} \bigg{[}\int_{[0, \zeta)} e^{- q(i) t}\diff L_{i}^{0,b(i)}(t)
		-\beta\int_{[0, \zeta)} e^{-q(i) t}dR_{i}^{0,b(i)}(t)\notag\\&\hspace{1.7cm}+e^{-q(i)\zeta}V_{\pi^{0,\bb}}(U_{r}^{0,\bb}(\zeta), H(\zeta))1_{\{U_{r,i}^{0,b(i)}(\zeta-)\geq -J_{iH(\xi)}\}} \notag\\
		&\hspace{1.7cm}+e^{-q (i) \zeta }\Big(\beta (U_{r,i}^{0,b(i)}(\zeta-)+J_{iH(\zeta)})+V_{\pi^{0,\bb}}(0, H(\zeta))\Big)1_{\{U_{r,i}^{0,b(i)}(\zeta-)< -J_{iH(\xi)}\}}\bigg{].}
	\end{align}
	By conditioning on the state of the Markov chain $H$ at the first regime switching time $\zeta$ and the random variable $J_{ij}$, describing the jump when $H$ makes a transition from the state $i$ to $j$, we get
	\begin{align}\label{fix-point_2}
		{\Em}_{(x, i)} \bigg[&\int_{[0, \zeta)} e^{- q(i) t}\diff L^{0,b(i)}_{i}(t)
		-\beta\int_{[0, \zeta)} e^{-q(i) t}dR^{0,b(i)}_{i}(t)+e^{-q(i)\zeta}V_{\pi^{0,\bb}}(U^{0,\bb}_{r}(\zeta), H(\zeta))1_{\{U_{r,i}^{0,b(i)}(\zeta-)\geq -J_{iH(\xi)}\}} \\
		& + e^{-q (i) \zeta }\Big(\beta (U_{r,i}^{0,b(i)}(\zeta-)+J_{iH(\zeta)})+V_{\pi^{0,\bb}}(0, H(\zeta))\Big)1_{\{U_{r,i}^{0,b(i)}(\zeta-)< -J_{iH(\zeta)}\}}\bigg]\notag\\
		& = \sum_{j\in E,j\not=i}\dfrac{\lambda_{ij}}{{\lambda}_i}  {\Em}_{(x, i)} \bigg{[}\int_{[0, \zeta)} e^{- q(i) t}\diff L^{0,b(i)}_{i}(t)
		-\beta\int_{[0, \zeta)} e^{-q(i) t}dR^{0,b(i)}_{i}(t)\notag\\
		& \hspace{3cm}+e^{-q(i)\zeta}\Big[V_{\pi^{0,\bb}}(U_{r,i}^{0,b(i)}(\zeta-)+J_{ij}, j)1_{\{U_{r,i}^{0,b(i)}(\zeta-)\geq -J_{ij}\}}\notag\\
		&\hspace{3cm}+\Big(\beta(U_{r,i}^{0,b(i)}(\zeta-)+J_{ij})+V_{\pi^{b(i)}}(0,j)\Big)1_{\{U_{r,i}^{0,b(i)}(\zeta-)<- J_{ij}\}}\Big]\bigg|H_{\zeta}=j\bigg{]}\notag\\
		& =  {\Em}_{(x, i)} \bigg{[}\int_{[0, \zeta)} e^{- q(i) t}\diff L^{0,b(i)}_{i}(t)
		-\beta\int_{[0, \zeta)} e^{-q(i) t} \diff R^{0,b(i)}_{i}(t)
		+e^{-q(i)\zeta}{\hat{V}}_{\pi^{0,\bb}}(U_{r,i}^{0,b(i)}(\zeta-), i)\bigg{]}\notag\\
		&=T_{\bb}V_{\pi^{0,\bb}}(x, i).\notag
	\end{align}
	In the last equality we used that $\zeta$ is an exponential random variable with rate ${\lambda}_i$, independent of the processes $L^{0,\bb}$, $R^{0,\bb}$, and $U^{0,\bb}$.
\end{proof}
For the next result we denote, for any $f,g\in\mathcal{B}$, 
$
\|f-g\|_{\infty}:=\max_{i \in E} \sup_{x \geq 0}|f(x,i)-g(x,i)|.
$
\begin{lemma}\label{norm_contraction}
If $\bb\in\mathcal{E}$ and  $f,g\in\mathcal{B}$  satisfy $\|f-g\|_{\infty} <\infty$, then
	\begin{equation*}
	\|T_{\bb}f-T_{\bb}g\|_{\infty}<{K}\|f-g\|_{\infty},
	 \end{equation*}
 where ${K}:=\max_{i\in E}\{{\lambda}_{i}/\theta(i)\}<1$.
\end{lemma}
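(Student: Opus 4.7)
The plan is a direct contraction argument exploiting the fact that the dividend and capital injection terms in the definition of $T_{\bb}$ do not depend on $f$ or $g$. Subtracting, I would obtain
\begin{equation*}
T_{\bb}f(x,i) - T_{\bb}g(x,i) = \lambda_i\,\E^i_x\left[\int_0^\infty e^{-\theta(i)t}\bigl(\hat f(U_{r,i}^{0,b(i)}(t),i) - \hat g(U_{r,i}^{0,b(i)}(t),i)\bigr)\diff t\right],
\end{equation*}
so everything reduces to bounding $|\hat f - \hat g|$ pointwise.

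The crucial observation is that in the definition \eqref{Expectation_Transform} of $\hat f$ the affine contribution $\beta(x+y)$ does not involve $f$, hence it cancels in $\hat f - \hat g$. Consequently, for every $(x,i)\in[0,\infty)\times E$,
\begin{equation*}
\hat f(x,i) - \hat g(x,i) = \sum_{j\neq i}\frac{\lambda_{ij}}{\lambda_i}\int_{(-\infty,0)}\Bigl[(f(0,j)-g(0,j))\mathbbm{1}_{\{x+y<0\}} + (f(x+y,j)-g(x+y,j))\mathbbm{1}_{\{x+y\geq 0\}}\Bigr]\diff F_{ij}(y),
\end{equation*}
and the elementary estimate $\sum_{j\neq i}\lambda_{ij}/\lambda_i = 1$ yields $|\hat f(x,i)-\hat g(x,i)|\leq \|f-g\|_\infty$.

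Plugging this bound back and using $\E^i_x\bigl[\int_0^\infty e^{-\theta(i)t}\diff t\bigr] = 1/\theta(i)$ gives
\begin{equation*}
|T_{\bb}f(x,i) - T_{\bb}g(x,i)|\leq \frac{\lambda_i}{\theta(i)}\|f-g\|_\infty,
\end{equation*}
and taking the supremum over $(x,i)$ produces $\|T_{\bb}f-T_{\bb}g\|_\infty\leq K\|f-g\|_\infty$ with $K=\max_{i\in E}\lambda_i/\theta(i)$. The inequality $K<1$ follows from the strict positivity $q(i)>0$, which ensures $\theta(i)=q(i)+\lambda_i>\lambda_i$ for every $i\in E$.

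There is no real obstacle; the entire argument rests on the cancellation of the $\beta(x+y)$ term in $\hat f-\hat g$ and on the normalization $\sum_{j\neq i}\lambda_{ij}=\lambda_i$. The only point needing a slight care is to justify that the finiteness of the relevant expectations (implicit in the definition of $T_{\bb}$) allows the interchange of integration and subtraction; this follows from Remark \ref{bound_tilde_f} applied to $f$ and $g$, Proposition \ref{NPV_aux}, and Assumption \ref{assump_1_aux}, all of which guarantee that the integrals involved are absolutely convergent.
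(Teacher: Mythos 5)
Your proof is correct and follows essentially the same route as the paper: the dividend/injection terms cancel in $T_{\bb}f - T_{\bb}g$, the affine $\beta(x+y)$ part cancels in $\hat f - \hat g$, yielding $|\hat f - \hat g| \le \|f-g\|_\infty$, and then $\lambda_i\int_0^\infty e^{-\theta(i)t}\,\diff t = \lambda_i/\theta(i)$ supplies $K$. The only cosmetic difference is that the paper writes the factor $\lambda_i\int_0^\infty e^{-\theta(i)t}\,\diff t$ as $\Em_{(0,i)}[e^{-q(i)\zeta}]$ with $\zeta\sim\mathrm{Exp}(\lambda_i)$ before evaluating it, whereas you compute the resolvent integral directly; these are identical. (As a side note, both your argument and the paper's actually establish $\le$ rather than the strict $<$ appearing in the statement; the nonstrict inequality is what is used downstream, so this is harmless.)
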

\begin{proof}
	Since
	\begin{align*}
		T_{\bb} f(x,i) 
	= \E^i_x&\left[ \int_{[0,\infty)} e^{-\theta(i)t} \diff L_i^{0,b(i)}(t) - \beta  \int_{[0,\infty)} e^{-\theta(i)t} \diff R_i^{0,b(i)}(t) \right]\\
	&\qquad \qquad 
	+ \E^i_x\left[e^{-q(i)\zeta} \hat{f}(U_{r,i}^{0,b(i)}(\zeta),i) \right],
	\end{align*}
	{for each $(x,i)\in[0,\infty)\times E$,}
	\begin{align*}
	| T_{\bb} f(x,i) - T_{\bb} g(x,i)|&\leq \sum_{j \neq i} \dfrac{\lambda_{ij}}{{\lambda}_i}  {\Em}_{(x,i)}\left[ e^{-q(i)\zeta} \int_{(-\infty,-U_{r,i}^{0,b(i)}(\zeta-))} |f(0,j) - g(0,j)| \right. \diff F_{ij}(y) \notag\\
	& + \left. e^{-q(i)\zeta }\int_{[-U_{r,i}^{0,b(i)}(\zeta-),0]} | f(U_{r,i}^{0,b(i)}(\zeta-)+y,j) - g(U_{r,i}^{0,b(i)}(\zeta-)+y,j) | \diff F_{ij}(y) \right]\notag\\
	&\leq\|f-g\|_{\infty} \sum_{j \neq i} \dfrac{\lambda_{ij}}{{\lambda}_i}  {\Em}_{(0,i)}[ e^{-q(i)\zeta}]\leq {K}\|f-g\|_{\infty}.\notag
	\end{align*}
\end{proof}

\subsection{Verification of barrier strategies.}
We define the space of functions
\begin{align*}
	\mathcal{D}:= \lbrace f \in \B : \hat{f}(\cdot,i) &\text{ is concave and satisfies that $\hat{f}'_+(0,i)\leq \beta$ }\text{and $\hat{f}'_+(\infty,i)\in[0,1]$ for $i \in E$} \rbrace
\end{align*}
\begin{proposition}\label{gammainD}
	Consider $f \in \B$ such that 
	it is concave, it is nondecreasing, and satisfies that $f'_{+}(\cdot,i)\leq \beta$ and $f'_{+}(\infty,i)\leq 1$ for all $i \in E$. Then $f \in \DD$. 
\end{proposition}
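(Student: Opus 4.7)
The plan is to verify each of the three defining properties of $\mathcal{D}$ by reducing to a pointwise statement about the integrand in \eqref{Expectation_Transform}, and then lifting the conclusion via the nonnegative linear combination (indexed by $j \neq i$) and the integration against $dF_{ij}$.

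For fixed $i$, $j \neq i$, and $y < 0$, I would set
\[
g_{y,j}(x) := \bigl(\beta(x+y) + f(0,j)\bigr)\mathbf{1}_{\{x+y<0\}} + f(x+y,j)\mathbf{1}_{\{x+y\geq 0\}}, \qquad x \geq 0,
\]
and verify directly that $g_{y,j}$ is continuous at the gluing point $x = -y$ (both pieces equal $f(0,j)$), with right derivative $\beta$ on $[0,-y)$ and $f'_+(x+y,j)$ on $(-y,\infty)$. Since $f(\cdot,j)$ is concave, $f'_+(\cdot,j)$ is nonincreasing, and by hypothesis $f'_+(0,j) \leq \beta$. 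Hence the right derivative of $g_{y,j}$ is nonincreasing on $[0,\infty)$, so $g_{y,j}$ is concave. This is the conceptual core of the argument: concavity holds only because the initial slope $\beta$ of the linear piece dominates the initial slope $f'_+(0,j)$ of the concave piece $f(\cdot+y,j)$.

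Step two transfers these pointwise properties to $\hat f(\cdot,i)$. Concavity is preserved under nonnegative linear combinations and integration with respect to $dF_{ij}$, so $\hat f(\cdot,i)$ is concave. Continuity of $\hat f(\cdot,i)$ follows from dominated convergence using the envelope $|g_{y,j}(x)| \leq |\beta y| + |f(0,j)| + \|f\|\bigl(1+|x+y|\bigr)$; together with Remark \ref{bound_tilde_f} this gives $\hat f \in \mathcal{B}$.

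Step three identifies $\hat f'_+$ by pulling the right-derivative inside the integral, justified by dominated convergence with the uniform bound $0 \leq (g_{y,j})'_+(x) \leq \beta$. At $x = 0$ every $y < 0$ yields $(g_{y,j})'_+(0) = \beta$, so
\[
\hat f'_+(0,i) \;=\; \beta \sum_{j \neq i} \frac{\lambda_{ij}}{\lambda_i} F_{ij}\bigl((-\infty,0)\bigr) \;\leq\; \beta.
\]
Sending $x \to \infty$, $(g_{y,j})'_+(x) = f'_+(x+y,j) \to f'_+(\infty,j) \in [0,1]$ for every $y < 0$, and a further application of dominated convergence yields
\[
\hat f'_+(\infty,i) \;=\; \sum_{j \neq i} \frac{\lambda_{ij}}{\lambda_i} f'_+(\infty,j)\, F_{ij}\bigl((-\infty,0)\bigr) \;\in\; [0,1],
\]
using also that $f$ is nondecreasing (so $f'_+(\infty,j) \geq 0$). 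Combining the three steps gives $f \in \mathcal{D}$. The only nontrivial step is the concavity of $g_{y,j}$ at the kink $x=-y$; the remainder is standard dominated-convergence bookkeeping.
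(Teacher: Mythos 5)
Your proof is correct and reaches the same conclusion by essentially the same mechanism the paper uses: a careful identification of the right derivative $\hat f'_+$ and its monotonicity. The only organizational difference is that you first establish concavity of each integrand $g_{y,j}$ (by noting its right derivative jumps down from $\beta$ to $f'_+(0,j)\le\beta$ at the kink $x=-y$ and then decreases, since $f(\cdot,j)$ is concave) and then invoke closure of concavity under conic combinations and integration against $dF_{ij}$; the paper instead differentiates $\hat f$ itself under the integral sign, observes that $\hat f'_+(\cdot,i)$ is non-increasing, and appeals to Theorem 6.4 of Hiriart-Urruty and Lemar\'echal. These are two phrasings of the same right-derivative characterization. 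Incidentally, your expressions $\hat f'_+(0,i)=\beta\sum_{j\ne i}\frac{\lambda_{ij}}{\lambda_i}F_{ij}\bigl((-\infty,0)\bigr)$ and $\hat f'_+(\infty,i)=\sum_{j\ne i}\frac{\lambda_{ij}}{\lambda_i}f'_+(\infty,j)F_{ij}\bigl((-\infty,0)\bigr)$ retain the mass factor $F_{ij}\bigl((-\infty,0)\bigr)$ that the paper drops (it would equal $1$ only if each $J_{ij}<0$ a.s.); your version is the more careful one, and in either case the required bounds $\hat f'_+(0,i)\le\beta$ and $\hat f'_+(\infty,i)\in[0,1]$ follow, the latter using the monotonicity of $f$ to secure the lower bound.
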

\begin{proof}
	Using \eqref{Expectation_Transform} and integration by parts, we obtain
	By \eqref{Expectation_Transform} and the Dominated Convergence Theorem, we have
	\begin{align*}
	\hat{f}^\prime_+&(x,i)=\lim_{\varepsilon\downarrow 0}\frac{\hat{f}(x+\varepsilon,i)-\hat{f}(x,i)}{\varepsilon}\\
	&=\sum_{j \neq i} \dfrac{\lambda_{ij}}{{{\lambda}}_i} \int_{(-\infty,0)} \lim_{\varepsilon\downarrow0}\frac{1}{\varepsilon}\bigg{[}
	\left\{ \left( \beta(x+\varepsilon+y) + f(0,j) \right) 1_{ \lbrace x +\varepsilon+ y < 0 \rbrace } + f(x+\varepsilon+y,j) 1_{\lbrace x+\varepsilon + y \geq 0 \rbrace} \right\} \\
	&- \left\{ \left( \beta(x+y) + f(0,j) \right) 1_{ \lbrace x + y < 0 \rbrace } + f(x+y,j) 1_{\lbrace x + y \geq 0 \rbrace} \right\}\bigg{]} \diff F_{ij}(y).
	\end{align*}
	We fix $x\geq 0$ and $y\leq0$, then we have $1_{\lbrace x+\varepsilon + y \geq 0 \rbrace}=1_{\lbrace x+ y \geq 0 \rbrace}$ and $1_{\lbrace x+\varepsilon + y < 0 \rbrace}=1_{\lbrace x+ y < 0 \rbrace}$ for small enough $\varepsilon>0$. Thus, we have
		\begin{align*}
	\hat{f}^\prime_+(x,i)=\sum_{j \neq i} \dfrac{\lambda_{ij}}{{{\lambda}}_i} \int_{(-\infty,0)} \left[ \beta 1_{ \lbrace x + y < 0 \rbrace } + f^\prime_+(x+y,j) 1_{\lbrace x + y \geq 0 \rbrace} \right] \diff F_{ij}(y)
	\end{align*}
	and $\hat{f}$ is continuous on $[0, \infty)$. 
	In addition, $\hat{f}^\prime_+$ is non-increasing since $f$ is concave and its right derivative is no more than $\beta$. 
	Thus, by Theorem 6.4 in \cite{HL2001}, the function $\hat{f}$ is concave. We have $\hat{f}^\prime_+\leq \beta$ since $f^\prime_+\leq \beta$. Additionally, by dominated convergence
	\begin{align*}
		\hat{f}'(+ \infty,i)  = \sum_{j \neq i} \dfrac{\lambda_{ij}}{{\lambda}_i} f'(+\infty,j)\leq 1.
	\end{align*}
	Therefore, we obtain that $f\in\DD$.
\end{proof}
For $f \in \DD$ and $(x,i) \in [0,\infty)\times E$ we define
\begin{align}\label{Supremum_Operator}
	\Gamma f(x,i) :&= \sup_{\pi \in \mathcal{A}}  {\Em}_{(x,i)}\left[ \int_{[0,\zeta)} e^{-q(i)t} \diff L^{\pi}_{r}(t) - \beta \int_{[0,\zeta)} e^{-q(i)t} \diff R^{\pi}_{r}(t) + e^{-q(i)\zeta} \hat{f}(U^{\pi}_r(\zeta -),i) \right]\\
	&=\sup_{\pi \in \mathcal{A}} \E_{x}^i\left[ \int_{[0,\infty)} e^{-\theta(i)t} \diff L^{\pi}_{r}(t) - \beta \int_{[0,\infty)} e^{-\theta(i)t} \diff R^{\pi}_{r}(t) + {\lambda}_i \int_0^{\infty}e^{-\theta(i)t} \hat{f}(U_r^{\pi}(t),i) dt\right].\notag
\end{align}
\begin{remark}\label{Remark_Optimal_Barrier}
	Since ${f} \in \DD$, then Theorem \ref{thm_optim_aux} guarantees that there exists $b^f(i)$ such that the supremum in the second equality \eqref{Supremum_Operator} is attained by a  periodic-classical reflection strategy at the barrier $b^f(i)$ and at 0, respectively, for each $i \in E$. Hence, by taking $\bb^f = (b^f(i))_{i\in E}$ we get $\Gamma f = T_{\bb^f} f$. From the verification results of Section \ref{verification_aux} it follows, for $f\in\mathcal{D}$ and $i\in E$, that $\Gamma f(\cdot,i) \in \mathcal{C}^1$, it has linear growth, it is concave, $(\Gamma f)'(0+,i)\leq \beta$, and $(\Gamma f)'(\infty,i)\in [0,1] $. Therefore, by Proposition \ref{gammainD}, we have that $\Gamma f \in \DD$.	
\end{remark}
The following result follows the same line of reasoning as in Proposition 5.4 in Noba et al. \cite{NobPerYu}.
\begin{proposition}\label{Prop_Supremum_FixedPoint}
	Let $v_0^-, v_0^+ \in \DD$, and for $n \geq 1$ we define $v_n^- := \Gamma v^-_{n-1}$ and $v^+_n := \Gamma v^+_{n-1}$. If $v^-_0 \leq V \leq v^+_0$ and $\| v^+_0-v^-_0 \|_{\infty}<\infty $, then we have $v^-_n \leq V \leq v^+_n$ for all $n \geq 1$. Moreover, we have
	\[
	V(x,i) = \lim_{n \rightarrow \infty} v^-_n(x,i) = \lim_{n \rightarrow \infty} v^+_n(x,i),
	\]
	where the convergence is in the {$\|\cdot\|_{\infty}$-norm (and thus in the $\|\cdot\|$-norm)}. In particular, $V \in \DD$.
\end{proposition}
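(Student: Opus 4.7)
The plan rests on two structural ingredients: (i) the dynamic programming principle (Proposition \ref{dpp}), which entails that $V$ is a fixed point of $\Gamma$; and (ii) the local contraction estimate in Lemma \ref{norm_contraction}, which via the identification $\Gamma f = T_{\bb^f} f$ from Remark \ref{Remark_Optimal_Barrier} can be promoted to a global contraction of $\Gamma$. Once these are in place, the iterative sequences $\{v_n^-\}$ and $\{v_n^+\}$ will be squeezed around $V$ by monotonicity and the width $\|v_n^+-v_n^-\|_\infty$ will decay geometrically, forcing uniform convergence to $V$.

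First I would establish the identity $V = \Gamma V$. Starting from the DPP identity \eqref{MAP_DPP}, conditioning on the post-jump state $H(\zeta)$ and the transition displacement $J_{i H(\zeta)}$ exactly as in the proof of Proposition \ref{Prop_FixedPoint} transforms the boundary term into $e^{-q(i)\zeta}\hat{V}(U_r^\pi(\zeta-),i)$, giving $V(x,i) = \Gamma V(x,i)$. Next, I would verify that $\Gamma$ is monotone: from \eqref{Expectation_Transform}, $f \leq g$ pointwise implies $\hat{f} \leq \hat{g}$, and since the payoff functional inside \eqref{Supremum_Operator} is non-decreasing in the boundary data, taking the supremum over $\pi \in \mathcal{A}$ preserves the inequality. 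A straightforward induction on $n$ using $v_0^- \leq V \leq v_0^+$ then gives $v_n^- \leq V \leq v_n^+$ for all $n \geq 1$.

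To upgrade Lemma \ref{norm_contraction} into a contraction statement for $\Gamma$, fix $f, g \in \mathcal{D}$ with $\|f-g\|_\infty < \infty$. By Remark \ref{Remark_Optimal_Barrier} there exist $\bb^f, \bb^g \in \mathcal{E}$ realising $\Gamma f = T_{\bb^f} f$ and $\Gamma g = T_{\bb^g} g$. Since $\bb^f$ is suboptimal for $g$, $T_{\bb^f} g \leq T_{\bb^g} g = \Gamma g$, so for every $(x,i)$,
\begin{equation*}
\Gamma f(x,i) - \Gamma g(x,i) \;\leq\; T_{\bb^f} f(x,i) - T_{\bb^f} g(x,i) \;\leq\; K\,\|f-g\|_\infty,
\end{equation*}
by Lemma \ref{norm_contraction}; swapping $f$ and $g$ yields the reverse inequality, hence $\|\Gamma f - \Gamma g\|_\infty \leq K\|f-g\|_\infty$. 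Iterating gives $\|v_n^+ - v_n^-\|_\infty \leq K^n\|v_0^+ - v_0^-\|_\infty \to 0$. Combined with the sandwich $v_n^- \leq V \leq v_n^+$ this forces $\|v_n^\pm - V\|_\infty \to 0$; since $\|\cdot\| \leq \|\cdot\|_\infty$, convergence also holds in the $\|\cdot\|$-norm.

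Finally, to conclude $V \in \mathcal{D}$, recall from Remark \ref{Remark_Optimal_Barrier} that each $v_n^+ \in \mathcal{D}$ with $v_n^+(\cdot,i)$ concave, non-decreasing, satisfying $(v_n^+)'_+(0+,i)\leq \beta$ and $(v_n^+)'_+(\infty,i) \in [0,1]$. Uniform convergence preserves concavity, monotonicity and the Lipschitz bound, so $V(\cdot,i)$ inherits these properties; the one-sided derivative bounds at $0^+$ and at $\infty$ pass to the limit by standard arguments for monotone sequences of concave functions. Proposition \ref{gammainD} then yields $V \in \mathcal{D}$. The most delicate point will be promoting the fixed-$\bb$ estimate of Lemma \ref{norm_contraction} into a contraction of the nonlinear operator $\Gamma$; the monotone bracket argument above resolves it, provided Remark \ref{Remark_Optimal_Barrier} is invoked to guarantee attainment of the supremum in \eqref{Supremum_Operator} by a barrier strategy.
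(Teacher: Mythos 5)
Your proposal is correct and follows essentially the same route as the paper: identify $V=\Gamma V$ from the DPP (Proposition \ref{dpp}, via the same conditioning as in Proposition \ref{Prop_FixedPoint}), use monotonicity of $\Gamma$ to sandwich $V$ between the iterates, promote Lemma \ref{norm_contraction} to a contraction of $\Gamma$ by the suboptimality-bracket argument (the paper compresses this into the single display $\|\Gamma f-\Gamma g\|_\infty\le\sup_{\bb}\|T_\bb f-T_\bb g\|_\infty$), and pass the defining properties of $\DD$ to the uniform limit. The only cosmetic difference is in the last step: the paper works with $\hat v_n^\pm$ and shows via dominated convergence that $\hat V=\lim\hat v_n^\pm$, inheriting concavity and the derivative bounds for $\hat V$ directly, whereas you argue the analogous inheritance at the level of $v_n^\pm$ and then invoke Proposition \ref{gammainD}; both are equivalent in substance.
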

\begin{proof}
	First, from the definition of $\Gamma$ we have that if $v^-_{n-1} \leq V \leq v^+_{n-1}$, then
	\begin{equation}\label{ine_V}
		v^-_n=\Gamma v_{n-1}^- \leq \Gamma V \leq \Gamma v_{n-1}^+\leq v^+_n.
	\end{equation}
	Proceeding by induction, we obtain the first claim.
	
	Following Remark \ref{Remark_Optimal_Barrier}, for any $f \in \DD$ there exists  $\bb^f\in\mathcal{E}$, such that $\Gamma f = \sup_{\bb\in\mathcal{E}} T_{\bb} f = T_{\bb^f} f$. Hence, using  {Lemma} \ref{norm_contraction}, we obtain  for $f,g \in \DD$ such that $\|f-g\|_{\infty}<\infty$ 
%
	 		\[
	 		\| \Gamma f - \Gamma g \|_{\infty}=\|T_{\bb^f}f-T_{\bb^g}g \|\leq \sup_{b\in\mathcal{E}} \| T_b f - T_b g \|_{\infty} \leq K\| f-g \|_{\infty}.
	 		\]

	Then, by an iteration of the previous identity and the definition of $v_n^+$ and $v_n^-$, we obtain  
		\[
		\| v_n^+-v_n^-\|_{\infty}  \leq   K^n \| v_0^+-v_0^- \|_{\infty}\quad\text{for}\ n\in\mathbb{N}.
		\]
		This implies that $\|v^{+}_{n}-v^{-}_{n}\|_{\infty}\underset{n\rightarrow\infty}{\longrightarrow}0$ given that  $K<1$. Therefore, using \eqref{ine_V}, we have
	\[
	\lim_{n\to\infty}v_n^+=\lim_{n\to\infty}v_n^-=\Gamma V=V.
	\]
	
	Following Remark \ref{Remark_Optimal_Barrier} we have that the functions $v_n^+, \, v^-_n$ belong to $\DD$ for all $n \geq 1$. On the other hand, using dominated convergence we obtain for $(x,i) \in [0,\infty) \times E$
	\begin{align*}
		\hat{V}(x,i) &= \lim_{n \rightarrow \infty}  \sum_{j \neq i} \dfrac{\lambda_{ij}}{{\lambda}_i} \int_{(-\infty,0)} \left[ \left( \beta(x+y) + \hat{v}_{n}^{\pm}(0,j) \right) 1_{ \lbrace x + y < 0 \rbrace } +\hat{v}_{n}^{\pm}(x+y,j) 1_{\lbrace x + y \geq 0 \rbrace} \right] \diff F_{ij}(y)\notag\\
		&= \lim_{n \rightarrow \infty}  \hat{v}_{n}^{\pm}(x,i).
	\end{align*}
	Hence, using the fact that the functions $v^{\pm}_n \in \DD$ we obtain that $V \in \DD$ as well.
\end{proof}
	{We will provide two auxiliary results that will be used in the proof of Theorem \ref{thm_optimal_barrier}. The first result guarantees the existence of functions $v_0^-, v_0^+ \in \DD$ that satisfy the conditions of Proposition \ref{Prop_Supremum_FixedPoint}, and its proof is deferred to Appendix \ref{proof_lemma_v}.
	\begin{lemma}\label{proof_lemma_vv}
		There exist $V_{-}, V_{+} \in \DD$ such that $\|V_--V_+\|_{\infty}<\infty$ and 
			\[
		V_{-}(x,i) \leq V(x,i) \leq V_{+}(x,i), \quad (x,i) \in [0,\infty) \times E.
		\]
	\end{lemma}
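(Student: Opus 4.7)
The plan is to construct $V_-$ and $V_+$ by combining explicit admissible / dominating strategies with an application of the operator $\Gamma$, so that membership in $\DD$ follows from Remark \ref{Remark_Optimal_Barrier} and bounded difference follows from matching asymptotic slopes.

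For $V_-$, set $\tilde V_-(x,i) := -M$ where $M>0$ is large enough that $V(x,i)\geq -M$ uniformly; such $M$ exists because the no-dividend classical-reflection strategy $\pi_0$ is admissible, so $V\geq V_{\pi_0}(x,i)=-\beta\,\Em_{(x,i)}[\int_0^\infty e^{-I(t)}\diff R^{\pi_0}(t)]$, and since starting from higher $x$ requires less capital injection, $\Em_{(x,i)}[\int e^{-I}\diff R^{\pi_0}]\leq \Em_{(0,i)}[\int e^{-I}\diff R^{\pi_0}]<\infty$ by admissibility and Assumption \ref{assump_1}. The constant function $\tilde V_-$ lies in $\DD$ by Proposition \ref{gammainD} (it is trivially concave, non-decreasing, with zero derivative). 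Setting $V_-:=\Gamma \tilde V_-$, monotonicity of $\Gamma$ gives $V_-\leq \Gamma V = V$, and Remark \ref{Remark_Optimal_Barrier} yields $V_-\in\DD$.

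For $V_+$, apply Ito's formula to $e^{-I(t)} U^\pi(t)$ for arbitrary $\pi\in\A$ and use $U^\pi\geq 0$, $R^\pi\geq 0$, $\beta>1$ to derive
\[
V_\pi(x,i) \leq x + \Em_{(x,i)}\!\left[\int_0^\infty e^{-I(t)} \diff X(t)\right].
\]
The MAP structure combined with Assumptions \ref{assump_1} and \ref{assump_2} shows that $\Em[\int e^{-I}\diff X]\leq C_0$ uniformly in $(x,i)$, so $V\leq \tilde V_+$ with $\tilde V_+(x,i):=x+C_0$. Since $\tilde V_+$ is concave, non-decreasing with derivative $1\leq\beta$ and limit slope $1\in[0,1]$, Proposition \ref{gammainD} gives $\tilde V_+\in\DD$, hence $V_+:=\Gamma \tilde V_+\in\DD$ by Remark \ref{Remark_Optimal_Barrier}, and $V=\Gamma V\leq \Gamma \tilde V_+=V_+$ by monotonicity.

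The key technical step is verifying $\|V_+-V_-\|_\infty<\infty$. By the single-regime slope identity derived in Section \ref{verification_aux}, the asymptotic slope of $V_\pm(\cdot,i)$ at $+\infty$ equals $\lambda_i\,\hat{\tilde V}_\pm{}'(\infty,i)/\theta(i)$, which yields $\lambda_i/\theta(i)<1$ for $V_+$ and $0$ for $V_-$; the resulting linear-in-$x$ discrepancy of slope $\lambda_i/\theta(i)$ is further contracted by iterating $\Gamma$ (each step multiplies the slope at infinity by a factor bounded above by $K=\max_i\lambda_i/\theta(i)<1$, consistent with Lemma \ref{norm_contraction}). Replacing $V_\pm$ with $\Gamma^N\tilde V_\pm$ for $N$ sufficiently large forces the linear part to vanish, leaving only a uniformly bounded remainder. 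The main obstacle is controlling the intercept constants in the iterates $\Gamma^n\tilde V_\pm(x,i)\simeq \alpha_i^n x + C_n^{(i)}$, ensuring the $C_n^{(i)}$ remain uniformly bounded as $n\to\infty$; this requires the scale-function representations of Section \ref{sec_NPV_Scale} together with the integrability afforded by Assumptions \ref{assump_1}--\ref{assump_2}.
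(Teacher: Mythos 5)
Your construction of the lower bound $V_-$ is workable, but the overall strategy breaks on the key step $\|V_+ - V_-\|_\infty < \infty$, and you honestly flag this yourself. The problem is structural, not merely technical: starting from $\tilde V_+(x,i) = x + C_0$ and $\tilde V_-(x,i) = -M$, the asymptotic slopes in $x$ of $\Gamma\tilde V_+(\cdot,i)$ and $\Gamma\tilde V_-(\cdot,i)$ differ. Indeed, the single-regime analysis (end of Lemma \ref{lem_HJB_2}) gives $v_{b^*}'(x) \to \dfrac{r + \lambda_i\, \hat w'(+\infty)}{\theta(i)+r}$, so with $\hat{\tilde V}_+'(\infty,\cdot)=1$ you get slope $\dfrac{r+\lambda_i}{\theta(i)+r}$, while with $\hat{\tilde V}_-'(\infty,\cdot)=0$ you get slope $\dfrac{r}{\theta(i)+r}$. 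Iterating $\Gamma$ shrinks this slope-discrepancy geometrically by roughly $K=\max_i \lambda_i/\theta(i)$, but it \emph{never} annihilates it in finitely many steps. Consequently $\Gamma^N\tilde V_+ - \Gamma^N\tilde V_-$ grows linearly in $x$ for \emph{every} finite $N$, and $\|\Gamma^N\tilde V_+ - \Gamma^N\tilde V_-\|_\infty = \infty$ for all $N$. This is not a gap that can be filled with more careful bookkeeping of the intercept constants; the sup-norm stays infinite no matter how large you take $N$. (Convergence in the weighted $\|\cdot\|$-norm is not enough, because Lemma \ref{norm_contraction} and Proposition \ref{Prop_Supremum_FixedPoint} genuinely require $\|\cdot\|_\infty$-finiteness of the starting pair.)

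The paper avoids this trap entirely by building $V_+$ and $V_-$ from a \emph{single} explicit admissible strategy $\pi'$ (classical reflection at $0$ together with paying the entire running maximum increment at each Poisson arrival). It sets $V_-$ to be the full expected NPV under $\pi'$ (dividends minus $\beta$ times injections) and $V_+$ to be the \emph{dividends-only} part, dropping the injection cost. Then $V_+ - V_- = \beta\,\Em_{(0,i)}\big[\int_{[0,\infty)} e^{-I(t)} \diff R^{\pi'}(t)\big]$, a constant in $x$, so the sup-norm difference is finite by construction. Domination $V \le V_+$ follows from a pathwise comparison: for any admissible $\pi$, one shows $L^{\pi'}(t) - L^{\pi}(t) + R^{\pi}(t) \ge 0$ using $L^{\pi'}(T(n)) \ge X(T(n))$ and $U^{\pi}(T(n)) \ge 0$, then integrates against $q(H(u))e^{-I(u)}\diff u$. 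If you want to repair your approach, you should mimic this: make your upper and lower candidates differ only by a bounded-in-$x$ quantity from the outset (e.g.\ derive them from a common policy), rather than trying to reconcile mismatched linear growth rates by iteration.
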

	We now provide the second auxiliary result and defer its proof to Appendix \ref{proof_inf_norm_finite}.
	\begin{lemma}\label{inf_norm_finite}
		For any $\bb\in\mathcal{E}$ we have that $\|V-{V_{\pi^{0,\bb}}}\|_{\infty}<\infty$.
	\end{lemma}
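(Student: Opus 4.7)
Since the barrier strategy $\pi^{0,\bb}$ is admissible, the pointwise inequality $V \geq V_{\pi^{0,\bb}}$ holds, so the plan is to bound $V - V_{\pi^{0,\bb}}$ from above uniformly. First I would invoke Lemma \ref{proof_lemma_vv} to fix $V_-, V_+ \in \mathcal{D}$ with $V_- \leq V \leq V_+$ and $\|V_+ - V_-\|_\infty < \infty$. Then
\[
V(x,i) - V_{\pi^{0,\bb}}(x,i) \;\leq\; \bigl(V_+(x,i) - V_-(x,i)\bigr) + \bigl(V_-(x,i) - V_{\pi^{0,\bb}}(x,i)\bigr),
\]
which reduces the problem to a uniform upper bound on $V_- - V_{\pi^{0,\bb}}$.

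Next, inspecting the construction of $V_-$ in the proof of Lemma \ref{proof_lemma_vv}, one can take $V_-$ to be the expected NPV of a specific Markov-modulated periodic-classical barrier strategy $\pi^{0,\bb^{(-)}}$ with threshold $\bb^{(-)} \in \mathcal{E}$. The remaining task then becomes the following key claim: for any $\bb^1, \bb^2 \in \mathcal{E}$, $\|V_{\pi^{0,\bb^1}} - V_{\pi^{0,\bb^2}}\|_\infty < \infty$. To prove it, I would couple the two controlled processes $U^{0,\bb^1}_r$ and $U^{0,\bb^2}_r$ on a common probability space driven by the same MAP $(X,H)$, Poisson arrivals $\mathcal{T}_r$, and regime-switching jumps $(J_{ij})$. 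Setting $M := \max_{i \in E} |b^1(i) - b^2(i)|$, the plan is to argue by induction on successive reflection epochs that $|U^{0,\bb^1}_r(t) - U^{0,\bb^2}_r(t)| \leq M$ pathwise: between Poisson times both processes share identical L\'evy and switching-jump increments so the gap is preserved, classical reflection at $0$ can only contract the gap, and at each Poisson dividend time the gap is reset to $|b^1(H(T(k))) - b^2(H(T(k)))| \leq M$. Consequently the cumulative dividend and capital-injection processes differ pathwise by uniformly bounded amounts at each event; integrating these differences against $e^{-I(t)}$ and using Assumptions \ref{assump_1} and \ref{assump_2} to control the resulting expectations will yield $\|V_{\pi^{0,\bb^1}} - V_{\pi^{0,\bb^2}}\|_\infty \leq C$ for a constant $C$ depending only on $M$ and the model parameters.

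The main obstacle will be formalising the pathwise coupling in the MAP setting: the regime-switching jumps $J_{iH(\zeta)}$ are applied simultaneously to both coupled surpluses and cancel in the difference, but asynchronous classical reflections at $0$ (one process may reach $0$ strictly before the other) require a careful comparison to verify that the pathwise gap is never enlarged and that the compensating capital injections can be bounded in expectation. Once these technical points are settled, combining the key claim with the sandwich from the first paragraph gives the desired uniform bound $\|V - V_{\pi^{0,\bb}}\|_\infty < \infty$.
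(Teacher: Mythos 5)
Your decomposition $V - V_{\pi^{0,\bb}} \leq (V_+ - V_-) + (V_- - V_{\pi^{0,\bb}})$ and the subsequent coupling argument are sound, but this is a genuinely different route from the paper's. The paper does not compare two barrier strategies at all. Instead it takes the upper bound
$V(x,i) \leq x\,\Em_{(0,i)}[e^{-I(T(1))}] + \Em_{(0,i)}\bigl[\int_{[0,\infty)}e^{-I(t)}\,\diff L^{\pi'}(t)\bigr]$
already established in the proof of Lemma~\ref{proof_lemma_vv}, then derives a lower bound for $V_{\pi^{0,\bb}}(x,i)$ directly from its definition: the dividend term is bounded below by keeping only the first Poisson dividend (which contains the term $x\,\Em_{(0,i)}[e^{-I(T(1))}]$ plus quantities independent of $x$), while the capital-injection cost is bounded above by a geometric series in $\Em_{(0,i)}[e^{-I(T(1))}]$ whose summands are expectations of the running infimum between consecutive Poisson arrivals. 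Subtracting, the linear-in-$x$ terms cancel exactly and the remainder is a finite constant. Your coupling strategy buys a cleaner conceptual picture (a pathwise gap bound $|U^{0,\bb^1}_r - U^{0,\bb^2}_r|\leq M$ at all times, followed by a uniform per-event bound on the difference of dividend and injection increments and a geometric-series summation), and it yields the stronger statement that any two barrier NPVs are at uniformly bounded distance. The paper's argument is shorter because it exploits the specific structure of the extremal strategy $\pi'$ already in hand and avoids any coupling or Lipschitz-Skorokhod analysis. Two small points to tighten in your version: at a Poisson time the gap is not necessarily \emph{reset} to $|b^1(H(T(k)))-b^2(H(T(k)))|$ when only one surplus lies above its barrier, though the gap does remain $\leq M$; and the final integration against $e^{-I(t)}$ should be made explicit via $\sum_k e^{-I(T(k))}$ whose expectation is finite (this is exactly the geometric series the paper uses), since a per-event bound alone does not immediately control the discounted integral.
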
}
{Using Lemma \ref{proof_lemma_vv} together with Proposition \ref{Prop_Supremum_FixedPoint}}, we provide an iterative construction of the value function as follows: Initialize $n=0$ and let $v = v_0$ for some $v_0 \in \DD$, then we proceed as follows:
\begin{enumerate}
	\item Compute $\bb^v = (b^v(i))_{i \in E}$ as in Remark \ref{Remark_Optimal_Barrier};
	\item Set $T_{\bb^v}v \rightarrow v, \, n \rightarrow n+1$, and $v \rightarrow v_n$, and return to step 1.
\end{enumerate}
\subsection{Proof of Theorem \ref{thm_optimal_barrier}}
Due to Proposition \ref{Prop_Supremum_FixedPoint}, we have $V \in \DD$. Hence, Proposition \ref{MAP_DPP} together with Remark \ref{Remark_Optimal_Barrier} imply that there exists $\bb^* = (b^*(i))_{i\in E}$ such that
\[
V(x,i) = \Gamma V(x,i) = T_{\bb^*}V(x,i), \quad (x,i) \in [0,\infty) \times E.
\]
This further yields that
\begin{align}
	V(x, i)=\lim_{n\uparrow \infty}T_{\bb^\ast}^n V(x, i), \qquad\text{for $(x,i)\in[0,\infty)\times E$.} 
\end{align}
Finally, an application of Proposition \ref{Prop_FixedPoint} together with {Lemma} \ref{norm_contraction} implies that
\begin{align}\label{fix_point_final}
	\|V-{V_{\pi^{0,\bb^*}}}\|_{\infty}\leq\|T_{\bb^*}^nV-T_{\bb^*}^n{V_{\pi^{0,\bb^*}}}\|_{\infty}\leq  
	K^{n} \| V-{V_{\pi^{0,\bb^*}}} \|_{\infty},\qquad\text{$n\in\mathbb{N}$.}
\end{align}
{By Lemma \ref{inf_norm_finite} we have that $\|V-{V_{\pi^{0,\bb^*}}}\|_{\infty}<\infty$, this together with \eqref{fix_point_final} implies that} $V(x, i)={V_{\pi^{0,\bb^*}}}(x,i)$ for $(x,i)\in[0,\infty)\times E$. 
\section*{Acknowledgement}
K. Noba was supported by JSPS KAKENHI Grant Number 21K13807 and JSPS Open Partnership Joint Research Projects Grant Number JPJSBP120209921.

\appendix

\section{{Proof of Propositions \ref{resol_par} and \ref{resol_par_ruin}}}\label{resol_par_app}

\subsection{Fluctuation identities}\label{App_Fluctuation}
In this section we provide a review of some fluctuaction identities for the processes $X$ and $Y$ that will be used in the proofs of Theorems \ref{resol_par} and \ref{resol_par_ruin}.
\subsubsection{Identities for the process $X$}
Let us define for $b\in\R$
\begin{align}
\tau_b^-:=\inf\{t>0:X(t)<b\},\qquad \tau_b^+:=\inf\{t>0:X(t)>b\}.
\end{align}
Then, by Theorem 8.1 in \cite{Kyp}
\begin{align}
\E_x\left[e^{-q\tau_0^-};\tau_0^-<\infty\right]&=Z^{(q)}(x)-\dfrac{q}{\Phi(q)}W^{(q)}(x),\label{fpt0}\\ E_x\left[e^{-q\tau_b^+};\tau_b^+<\tau_0^-\right]&=\dfrac{W^{(q)}(x)}{W^{(q)}(b)}\label{fpt},
\end{align}
On the other hand, by Theorem 2.7(i) in \cite{KKR}, it follows that for any measurable function $h:\R\mapsto\R$,
\begin{align}\label{eq2}
\E_x\left[\int_0^{\tau_b^+\wedge\tau_0^-}e^{-qt}h(X(t))dt\right]&=\int_{0}^{b}h(y)\left\{\dfrac{W^{(q)}(x)W^{(q)}(b-y)}{W^{(q)}(b)}-W^{(q)}(x-y)\right\}\diff y\\
&=\dfrac{W^{(q)}(x)}{W^{(q)}(b)}\rho^{(q)}_{b}(b;h)-\rho^{(q)}_{b}(x;h), \qquad x\in[0,b],\notag
\end{align}
where $\rho_b^{(q)}$ is defined in \eqref{fun_rho}. 

Additionally, by Theorem 2.7(iii) in \cite{KKR}, for any bounded measurable function $h:\R\mapsto\R$ with compact support,
\begin{align}
\E_x\left[\int_0^{\tau_0^-}e^{-qt}h(X(t))dt\right]&=\int_0^{\infty}h(y)\left\{e^{-\Phi(q)y}W^{(q)}(x)-W^{(q)}(x-y)\right\}\diff y.\label{rlp} \qquad x \geq0.
\end{align}
Finally, by an application of Lemma 2.1 in \cite{lrz} we obtain the following identities for $x > b$
\begin{align}
\E_x\left[e^{-(q+r)\tau_b^-} W^{(q)}(X(\tau_b^-));\tau_b^-<\infty\right] &= W^{(q,r)}_b(x)-rW^{(q+r)}(x-b)\int_b^{\infty}e^{-\Phi(q+r)(y-b)}W^{(q)}(y)\diff y,\label{lrz} \\
\E_x\left[e^{-(q+r)\tau_b^-} Z^{(q)}(X(\tau_b^-));\tau_b^-<\infty\right] &= Z^{(q,r)}_b(x)-rW^{(q+r)}(x-b)\int_b^{\infty}e^{-\Phi(q+r)(y-b)}Z^{(q)}(y)\diff y,  \notag\\
\E_x\left[e^{-(q+r)\tau_b^-} \rho^{(q)}_{b}(X(\tau_b^-);h);\tau_b^-<\infty\right] &= \rho^{(q,r)}_b(x;h)-rW^{(q+r)}(x-b)\int_b^{\infty}e^{-\Phi(q+r)(y-b)}\rho^{(q)}_{b}(y;h)\diff y\notag. 
\end{align}

\subsubsection{Identities for the process $Y$}
We know provide a review of some fluctuation identities for the L\'evy process reflected at the lower boundary $0$, given by $Y_t:=X_t+\sup_{0\leq s\leq t}(-X_s)\vee0$ for $t\geq0$. For any $b\geq0$, let $\kappa_b^+:=\inf\{t>0: Y_t>b\}$, then by Theorem 2.8(i) in \cite{KKR}
\begin{align}\label{fupt_r}
\E_x\left[e^{-q\kappa_b^+}\right]=\dfrac{Z^{(q)}(x)}{Z^{(q)}(b)}.
\end{align}
Additionally, by Theorem 2.8(iii) of \cite{KKR} we have that for any bounded measurable function $h:\R\mapsto\R$ with compact support 
\begin{align}\label{rrb}
\E_x\left[\int_0^{\kappa_b^+}e^{-qt}h(Y(t)) \diff t \right]=\dfrac{Z^{(q)}(x)}{Z^{(q)}(b)}\rho^{(q)}_{b}(b;h)-\rho^{(q)}_{b}(x;h),\qquad x\in[0,b]
\end{align}
Additionally, by taking $b\to\infty$ in \eqref{rrb} we obtain
\begin{align}\label{rrbi}
\E_x\left[\int_0^{\infty}e^{-qt}h(Y(t)) \diff t \right]=Z^{(q)}(x)\dfrac{\Phi(q)}{q}\int_0^{\infty}e^{-\Phi(q)y}h(y)dy-\int_0^{x}W^{(q)}(x-y)h(y)dy.
\end{align}


\subsection{Proof of Proposition \ref{resol_par}}\label{App_Proof_1}

Consider  $g^{(q)}(\cdot\,;h)$ as in \eqref{r12}.

(i) Using the strong Markov property and the absence of positive jumps, we obtain for $x<b$
\begin{align}
g^{(q)}(x;h)=\E_x\left[\int_0^{\kappa_b^+}e^{-qt}h(Y(t))\diff t\right]+\E_x\left[e^{-q\kappa_b^+}\right]g^{(q)}(b;h).
\end{align}
Then, by \eqref{fupt_r} and \eqref{rrb} we obtain that
\begin{align}\label{r2}
g^{(q)}(x;h)=\dfrac{Z^{(q)}(x)}{Z^{(q)}(b)}\left(\rho^{(q)}_{b}(b;h)+g^{(q)}(b;h)\right)-\rho^{(q)}_{b}(x;h).
\end{align}
On the other hand, for $x > b$, note that $\lbrace U_r^{(0,b)}(t) : t \leq T(1)\wedge \tau_b^- \rbrace$ started at $x$ is equal in law to $\lbrace X(t) : t \leq T(1)\wedge \tau_b^- \rbrace$ started at $x$, as well. By using this, and the strong Markov property we obtain
\begin{align}\label{r7}
g^{(q)}(x;h)&=\E_x\left[e^{-qT(1)};T(1)<\tau_b^-\right]g^{(q)}(b;h)\\
&\quad+\E_x\left[e^{-q\tau_b^-}g^{(q)}(X(\tau_b^-);h);\tau_b^-<T(1)\right]+\E_x\left[\int_0^{\tau_b^-\wedge T(1)}e^{-qt}h(X(t))\diff t\right].\notag
\end{align}
Now by taking expectation w.r.t. $T(1)$ and by \eqref{fpt0}, we get
\begin{align}\label{r1}
\E_x\left[e^{-qT(1)};T(1)<\tau_b^-\right]&=\dfrac{r}{r+q}\E_x\left[1-e^{-(q+r)\tau_b^-}\right]\\
&=\dfrac{r}{r+q}\left(1-Z^{(q+r)}(x-b)+\dfrac{q+r}{\Phi(q+r)}W^{(q+r)}(x-b)\right).\notag
\end{align}
Also, by an application of the spatial homogeneity of $X$ together with \eqref{rlp}
\begin{align}\label{r6}
\E_x\left[\int_0^{\tau_b^-\wedge T(1)}e^{-qt}h(X(t))\diff t\right]&=\E_x\left[r\int_0^{\infty}e^{-rs}\int_0^{\tau_b^-}e^{-qt}h(X(t))1_{\{t<s\}}\diff t\diff s\right]\\
&=\E_x\left[\int_0^{\tau_b^-}e^{-(q+r)s}h(X(s))\diff s\right]\notag\\
&=\E_{x-b}\left[\int_0^{\tau_0^-}e^{-(q+r)s}h(X(s)+b)\diff s\right]\notag\\
&= \int_0^{\infty}h(y+b)\left\{e^{-\Phi(q+r)y}W^{(q+r)}(x-b)-W^{(q+r)}(x-b-y)\right\}\diff y.\notag
\end{align}
Finally, by \eqref{r2} we have
\begin{align}\label{r3}
\E_x\left[e^{-q\tau_b^-}g^{(q)}(X(\tau_b^-);h);\tau_b^-<T(1)\right]&=\E_x\left[e^{-(q+r)\tau_b^-}g^{(q)}(X(\tau_b^-);h);\tau_b^-<\infty\right]\\
&=\dfrac{(\rho^{(q)}_{b}(b;h)+g^{(q)}(b;h))}{Z^{(q)}(b)}\E_x\left[e^{-(q+r)\tau_b^-}Z^{(q)}(X(\tau_b^-));\tau_b^-<\infty\right]\notag\\
&\quad-\E_x\left[e^{-(q+r)\tau_b^-}\rho^{(q)}_{b}(X(\tau_{b}^{-});h);\tau_b^-<\infty\right]\notag.
\end{align}
We can further expand \eqref{r3} using \eqref{lrz}, which yields 
\begin{align}\label{r9}
\E_x&\left[e^{-q\tau_b^-}g^{(q)}(X(\tau_b^-);h);\tau_b^-<T(1)\right]=\E_x\left[e^{-(q+r)\tau_b^-}g^{(q)}(X(\tau_b^-);h);\tau_b^-<\infty\right]\\
&=\dfrac{(\rho^{(q)}_{b}(b;h)+g^{(q)}(b;h))}{Z^{(q)}(b)}\left(Z^{(q,r)}_b(x)-rW^{(q+r)}(x-b) \int_b^{\infty}  e^{-\Phi(q+r)(y-b) }Z^{(q)}(y)\diff y\right)\notag\\
& \quad -\rho^{(q,r)}_b(x)+rW^{(q+r)}(x-b) \int_b^{\infty} e^{-\Phi(q+r) (y-b) }\rho^{(q)}_{b}(y;h)\diff y.\notag
\end{align}

Thus, by combining \eqref{r1}, \eqref{r6}, and \eqref{r9} in \eqref{r7} we obtain for $x > b$
\begin{align}\label{r10}
g^{(q)}(x;h)&= \dfrac{(g^{(q)}(b;h)+\rho^{(q)}_b(b;h))}{Z^{(q)}(b)}Z^{(q,r)}_b(x)-\rho^{(q,r)}_b(x)-rg^{(q)}(b;h)\overline{W}^{(q+r)}(x-b)\\
&-\int_0^{x-b}h(y+b)W^{(q+r)}(x-b-y)\diff y+W^{(q+r)}(x-b)\bigg(\dfrac{rg^{(q)}(b;h)}{\Phi(q+r)}\notag\\
&-\bigg(\dfrac{(\rho^{(q)}_b(b;h)+g^{(q)}(b;h))}{Z^{(q)}(b)}\bigg)r\int_b^{\infty}e^{-\Phi(q+r)(y-b) }Z^{(q)}(y)\diff y\notag\\
&+r\int_b^{\infty}e^{-\Phi(q+r) (y-b) }\rho^{(q)}_b(y;h)\diff y+\int_0^{\infty}h(y+b)e^{-\Phi(q+r)y}\diff y\bigg).\notag
\end{align}
\par (ii) On the other hand, by an application of the strong Markov property we obtain that
\begin{align}\label{g_b_1}
g^{(q)}(b;h)=\delta_1+g^{(q)}(b;h)\delta_2+\delta_3,
\end{align}
where
\begin{align*}
\delta_1:&=\E_b\left[\int_0^{T(1)}e^{-qt}h(Y(t)) \diff t\right],\notag\\
\delta_2:&=\E_b\left[e^{-qT(1)}1_{\{Y(t)\geq b\}}\right],\notag\\
\delta_3:&=\E_b\left[e^{-qT(1)}g^{(q)}(Y(T(1));h)1_{\{Y(T(1))<b\}}\right].
\end{align*}
Now, using identity \eqref{rrbi} we can write
\begin{align}\label{delta_1}
\delta_1&=\E_b\left[\int_0^{\infty}e^{-(q+r)t}h(Y(t)) \diff t\right]\\
&=Z^{(q+r)}(b)\dfrac{\Phi(q+r)}{r+q}\int_0^{\infty}e^{-\Phi(q+r)y}h(y)dy-\int_0^bW^{(q+r)}(b-y)h(y)\diff y,\notag
\end{align}
and
\begin{align}\label{delta_2}
\delta_2&=r\E_b\left[\int_0^{\infty}e^{-(q+r)t}1_{\{Y(t)\geq b\}} \diff t\right]\\
&=rZ^{(q+r)}(b)\dfrac{\Phi(q+r)}{r+q}\int_b^{\infty}e^{-\Phi(q+r)y} \diff y=\dfrac{r}{r+q}Z^{(q+r)}(b)e^{-\Phi(q+r)b}.\notag
\end{align}
For the last term in \eqref{g_b_1}, we once again use \eqref{rrbi} together with \eqref{r2} to obtain
\begin{align}\label{delta_3}
\delta_3&=r \E_b\left[\int_0^{\infty}e^{-(q+r)t}g^{(q)}(Y(t);h)1_{\{Y(t)<b\}} \diff t\right]\\
&=rZ^{(q+r)}(b)\dfrac{\Phi(q+r)}{r+q}\int_0^{b}e^{-\Phi(q+r)y}g^{(q)}(y;h) \diff y-r\int_0^bW^{(q+r)}(b-y)g^{(q)}(y;h) \diff y\notag\\
&=rZ^{(q+r)}(b)\dfrac{\Phi(q+r)}{r+q}\left[\dfrac{\rho^{(q)}_{b}(b;h)+g^{(q)}(b;h)}{Z^{(q)}(b)}\int_0^{b}e^{-\Phi(q+r)y}Z^{(q)}(y) \diff y-\int_0^{b}e^{-\Phi(q+r)y}\rho_b^{(q)}(y;h) \diff y\right]\notag\\
&\quad-r\dfrac{\rho^{(q)}_{b}(b;h)+g^{(q)}(b;h)}{Z^{(q)}(b)}\int_0^{b}W^{(q+r)}(b-y)Z^{(q)}(y)dy+r\int_0^{b}W^{(q+r)}(b-y)\rho_b^{(q)}(y;h) \diff y.\notag
\end{align}
Using integration by parts together with \eqref{scale_function_laplace}, we get
\begin{align*}
\int_0^{\infty}e^{-\Phi(q+r)y}Z^{(q)}(y) \diff y=\dfrac{1}{\Phi(q+r)}+\dfrac{q}{\Phi(q+r)}\int_0^{\infty}e^{-\Phi(q+r)y}W^{(q)}(y) \diff y=\dfrac{r+q}{r\Phi(q+r)}.
\end{align*}
Hence,
\begin{align}\label{g_b_2}
r\int_0^{b}e^{-\Phi(q+r)y}Z^{(q)}(y) \diff y=\dfrac{r+q}{\Phi(q+r)}-r\int_b^{\infty}e^{-\Phi(q+r)y}Z^{(q)}(y) \diff y.
\end{align}
Now, using identity \eqref{eq1},
\begin{align}\label{g_b_3}
r\int_0^{b}W^{(q+r)}(b-y)Z^{(q)}(y)dy=Z^{(q+r)}(b)-Z^{(q)}(b).
\end{align}
Additionally, using Fubini's theorem together with \eqref{eq1}, gives 
\begin{align}\label{g_b_4}
r\int_0^bW^{(q+r)}(b-y)\rho_b^{(q)}(y;h) \diff y&=r\int_0^b W^{(q+r)}(b-y)\int_0^{b}W^{(q)}(y-u)h(u)\diff u \diff y\\
&=r\int_0^bh(u)\int_{u}^bW^{(q+r)}(b-y)W^{(q)}(y-u)\diff y\diff u \notag\\
&=\int_0^bh(u)\left(W^{(q+r)}(b-u)-W^{(q)}(b-u)\right)\diff u\notag\\
&=\int_0^bh(u)W^{(q+r)}(b-u)\diff u-\rho_b^{(q)}(b;h).\notag
\end{align}
On the other hand, by Fubini's theorem together with \eqref{scale_function_laplace}
\begin{align*}
r\int_0^{\infty}e^{-\Phi(q+r)y}\rho_b^{(q)}(y;h) \diff y&=\int_0^{\infty}e^{-\Phi(q+r)y}\int_0^{b}W^{(q)}(y-u)h(u) \diff u  \diff y\\
&=r\int_0^{b}h(u)\int_{u}^{\infty}e^{-\Phi(q+r)y}W^{(q)}(y-u)\diff y \diff u\notag\\
&=\int_0^{b}e^{-\Phi(q+r)u}h(u) \diff u.\notag
\end{align*}
The above identity implies that
\begin{align}\label{g_b_5}
r\int_0^{b}e^{-\Phi(q+r)y}\rho_b^{(q)}(y;h) \diff y=\int_0^{b}e^{-\Phi(q+r)u}h(u) \diff u-r\int_{b}^{\infty}e^{-\Phi(q+r)y}\rho_b^{(q)}(y;h) \diff y.
\end{align}
Therefore, applying \eqref{g_b_2}--\eqref{g_b_5} in \eqref{delta_3}
\begin{align}\label{delta_3_a}
\delta_3&=Z^{(q+r)}(b)\dfrac{\Phi(q+r)}{r+q}\Bigg[-\dfrac{\rho^{(q)}_{b}(b;h)+g^{(q)}(b;h)}{Z^{(q)}(b)}r\int_b^{\infty}e^{-\Phi(q+r)y}Z^{(q)}(y) \diff y
\\&\quad-\int_0^{b}e^{-\Phi(q+r)u}h(u) \diff u+r\int_{b}^{\infty}e^{-\Phi(q+r)y}\rho_b^{(q)}(y;h) \diff y\Bigg]+g^{(q)}(b;h)+\int_0^bh(u)W^{(q+r)}(b-u)\diff u.\notag
\end{align}
On the other hand, using \eqref{delta_1}, \eqref{delta_2} and \eqref{delta_3_a} in \eqref{g_b_1}, 
\begin{align*}
g^{(q)}(b;h)&=Z^{(q+r)}(b)\dfrac{\Phi(q+r)}{r+q}\Bigg[-\dfrac{\rho^{(q)}_{b}(b;h)+g^{(q)}(b;h)}{Z^{(q)}(b)}r\int_b^{\infty}e^{-\Phi(q+r)y}Z^{(q)}(y)dy-\int_b^{\infty}e^{-\Phi(q+r)u}h(u)du\notag\\&+r\int_{b}^{\infty}e^{-\Phi(q+r)y}\rho_b^{(q)}(y;h)dy\Bigg]+g^{(q)}(b;h)+\dfrac{r}{r+q}Z^{(q+r)}(b)e^{-\Phi(q+r)b}g^{(q)}(b;h).
\end{align*}
Hence, solving for $g^{(q)}(b;h)$,  it yields
\begin{align}\label{r11}
C(b;h)=g^{(q)}(b;h)&=\dfrac{\Xi^{(q,r)}(b;h)-r\dfrac{\rho^{(q)}_{b}(b;h)}{Z^{(q)}(b)}\displaystyle\int_b^{\infty}e^{-\Phi(q+r) (y-b) }Z^{(q)}(y)\diff y}{\displaystyle\dfrac{r}{Z^{(q)}(b)}\int_b^{\infty}e^{-\Phi(q+r) (y-b) }Z^{(q)}(y)\diff y-\dfrac{r}{\Phi(q+r)}},
\end{align}
where $\Xi(b;h)$ is given in \eqref{e1}. Therefore, using \eqref{r11} in \eqref{r10} we get \eqref{r12}. 

\subsection{Proof of Proposition \ref{resol_par_ruin}}\label{App_Proof_2}
We denote
\begin{align}
\tilde{g}^{(q)}(x;h):=\E_x\left[\int_0^{\tau_0^{-}(r)}e^{-qt}h(U_r^{b}(t))dt\right],
\end{align}
\par (i) First, by using the Markov property and due to the absence of positive jumps, we obtain for $x<b$
\begin{align}
\tilde{g}^{(q)}(x;h)=\E_x\left[\int_0^{\tau_b^+\wedge\tau_0^-}e^{-qt}h(X_t)\diff t\right]+\E_x\left[e^{-q\tau_b^+};\tau_b^+<\tau_0^-\right] \tilde{g}^{(q)}(b;h).
\end{align}
Then, by \eqref{fpt} and \eqref{eq2} we obtain that
\begin{align}\label{r2a}
\tilde{g}^{(q)}(x;h)=\dfrac{W^{(q)}(x)}{W^{(q)}(b)}\left(\rho^{(q)}_{b}(b;h)+ \tilde{g}^{(q)}(b;h)\right)-\rho^{(q)}_{b}(x;h).
\end{align}
On the other hand, for $x > b$ we observe that $\lbrace U_r^{b}(t) : t \leq T(1)\wedge \tau_b^- \rbrace$ started at $x$ is equal in law to $\lbrace X(t) : t \leq T(1)\wedge \tau_b^- \rbrace$ started at $x$, as well. By using this, and the strong Markov property we obtain
\begin{align}\label{r7a}
\tilde{g}^{(q)}(x;h)&=\E_x\left[e^{-qT(1)};T(1)<\tau_b^-\right]\tilde{g}^{(q)}(b;h)\\
&\quad+\E_x\left[e^{-q\tau_b^-} \tilde{g}^{(q)}(X_{\tau_b^-};h);\tau_b^-<T(1)\right]+\E_x\left[\int_0^{\tau_b^-\wedge T(1)}e^{-qt}h(X_t)\diff t\right].\notag
\end{align}
Using \eqref{r2a} we have
\begin{align}\label{r3a}
\E_x\left[e^{-q\tau_b^-}\tilde{g}^{(q)}(X_{\tau_b^-};h);\tau_b^-<T(1)\right]&=\E_x\left[e^{-(q+r)\tau_b^-}\tilde{g}^{(q)}(X_{\tau_b^-};h);\tau_b^-<\infty\right]\\
&=\dfrac{(\rho^{(q)}_{b}(b;h)+\tilde{g}^{(q)}(b;h))}{W^{(q)}(b)}\E_x\left[e^{-(q+r)\tau_b^-}W^{(q)}(X_{\tau_b^-});\tau_b^-<\infty\right]\notag\\
&\quad-\E_x\left[e^{-(q+r)\tau_b^-}\rho^{(q)}_{b}(X_{\tau_{b}^{-}};h);\tau_b^-<\infty\right].\notag
\end{align}
Proceeding like in \eqref{r9} and using \eqref{lrz}, \red{it} yields 
\begin{align}\label{r9a}
\E_x&\left[e^{-q\tau_b^-}\tilde{g}^{(q)}(X_{\tau_b^-};h);\tau_b^-<T(1)\right]\\
&=\dfrac{(\rho^{(q)}_{b}(b;h)+\tilde{g}^{(q)}(b;h))}{W^{(q)}(b)}\left(W^{(q,r)}_b(x)-rW^{(q+r)}(x-b) \int_b^{\infty}  e^{-\Phi(q+r)(y-b) }W^{(q)}(y)\diff y\right)\notag\\
& \quad -\rho^{(q,r)}_b(x)+rW^{(q+r)}(x-b) \int_b^{\infty} e^{-\Phi(q+r) (y-b) }\rho^{(q)}_{b}(y;h)\diff y\notag\\
&=\dfrac{(\rho^{(q)}_{b}(b;h)+\tilde{g}^{(q)}(b;h))}{W^{(q)}(b)}\left(W^{(q,r)}_b(x)-W^{(q+r)}(x-b) Z^{(q)}(b;\Phi(q+r))\right)\notag\\
& \quad -\rho^{(q,r)}_b(x)+rW^{(q+r)}(x-b) \int_b^{\infty} e^{-\Phi(q+r) (y-b) }\rho^{(q)}_{b}(y;h)\diff y,\notag
\end{align}
where in the last equality we used \eqref{def_z_nuevo}.

Hence, combining \eqref{r1}, \eqref{r6}, and \eqref{r9a} in \eqref{r7a}, gives for $x > b$, 
\begin{align}\label{r10a}
\tilde{g}^{(q)}(x;h)&= 
\dfrac{(\rho^{(q)}_{b}(b;h)+ \tilde{g}^{(q)}(b;h))}{W^{(q)}(b)}W^{(q,r)}_b(x)-\rho^{(q,r)}_b(x)\\
&\quad-r\tilde{g}^{(q)}(b;h)\overline{W}^{(q+r)}(x-b)-\int_0^{x-b}h(y+b)e^{-\Phi(q+r)y}W^{(q+r)}(x-b-y)\diff y\notag\\
&\quad+ W^{(q+r)}(x-b)\bigg[\int_0^{\infty}h(y+b)e^{-\Phi(q+r)y}\diff y+\dfrac{r}{\Phi(q+r)}\tilde{g}^{(q)}(b;h) \notag\\
&\quad -\dfrac{(\rho^{(q)}_{b}(b;h)+ \tilde{g}^{(q)}(b;h))}{W^{(q)}(b)}r \int_0^{\infty} e^{-\Phi(q+r) z} W^{(q)}(z+b) \diff z+r\int_b^{\infty}e^{-\Phi(q+r)y (y-b) }\rho^{(q)}_{b}(y;h)\diff y\bigg].\notag
\end{align}
\par (ii) On the other hand, by an application of the Markov property we obtain that
\begin{align}\label{t_g_b_1}
\tilde{g}^{(q)}(b;h)=\tilde{\delta}_1+\tilde{g}^{(q)}(b;h)\tilde{\delta}_2+\tilde{\delta}_3,
\end{align}
where
\begin{align*}
\tilde{\delta}_1:&=\E_b\left[\int_0^{\tau_0^-\wedge T(1)}e^{-qt}h(X_t) \diff t\right],\notag\\
\tilde{\delta}_2:&=\E_b\left[e^{-qT(1)}1_{\{X_t\geq b\}};T(1)<\tau_0^-\right],\notag\\
\tilde{\delta}_3:&=\E_b\left[e^{-qT(1)}\tilde{g}^{(q)}(X_{T(1)};h)1_{\{X_{T(1)}<b\}};T(1)<\tau_0^-\right].
\end{align*}
Now, using identity \eqref{rlp} we can write
\begin{align}\label{t_delta_1}
\tilde{\delta}_1=\E_b\left[\int_0^{\tau_0^-}e^{-(q+r)t}h(X_t)dt\right]=W^{(q+r)}(b)\int_0^{\infty}e^{-\Phi(q+r)y}h(y)dy-\int_0^bW^{(q+r)}(b-y)h(y)dy,
\end{align}
and
\begin{align}\label{t_delta_2}
\tilde{\delta}_2=r\E_b\left[\int_0^{\tau_0^-}e^{-(q+r)t}1_{\{X_t\geq b\}}dt\right]=rW^{(q+r)}(b)\int_b^{\infty}e^{-\Phi(q+r)y}dy=\dfrac{r}{\Phi(r+q)}W^{(q+r)}(b)e^{-\Phi(q+r)b}.
\end{align}
In order to compute $\tilde{\delta}_3$, we  use \eqref{rlp} together with \eqref{r2a} to obtain
\begin{align}\label{t_delta_3}
\tilde{\delta}_3&=r \E_b\left[\int_0^{\tau_0^-}e^{-(q+r)t}\tilde{g}^{(q)}(X_t;h)1_{\{X_t<b\}}dt\right]\\
&=rW^{(q+r)}(b)\int_0^{b}e^{-\Phi(q+r)y}\tilde{g}^{(q)}(y;h)dy-r\int_0^bW^{(q+r)}(b-y)\tilde{g}^{(q)}(y;h)dy\notag\\
&=rW^{(q+r)}(b)\left[\dfrac{\rho^{(q)}_{b}(b;h)+\tilde{g}^{(q)}(b;h)}{W^{(q)}(b)}\int_0^{b}e^{-\Phi(q+r)y}W^{(q)}(y)dy-\int_0^{b}e^{-\Phi(q+r)y}\rho_b^{(q)}(y;h)dy\right]\notag\\
&-r\dfrac{\rho^{(q)}_{b}(b;h)+\tilde{g}^{(q)}(b;h)}{W^{(q)}(b)}\int_0^{b}W^{(q+r)}(b-y)W^{(q)}(y)dy+r\int_0^{b}W^{(q+r)}(b-y)\rho_b^{(q)}(y;h)dy.\notag
\end{align}
Using \eqref{scale_function_laplace}, gives
\begin{align}\label{t_g_b_2}
r\int_0^{b}e^{-\Phi(q+r)y}W^{(q)}(y)dy=1-r\int_b^{\infty}e^{-\Phi(q+r)y}W^{(q)}(y)dy.
\end{align}
Now, using \eqref{eq1}
\begin{align}\label{t_g_b_3}
r\int_0^{b}W^{(q+r)}(b-y)W^{(q)}(y)dy=W^{(q+r)}(b)-W^{(q)}(b).
\end{align}
Therefore, using \eqref{t_g_b_2}, \eqref{t_g_b_3}, \eqref{g_b_4}, and \eqref{g_b_5} in \eqref{t_delta_3}
\begin{align}\label{t_delta_3_a}
\tilde{\delta}_3&=W^{(q+r)}(b)\Bigg[-\dfrac{\rho^{(q)}_{b}(b;h)+\tilde{g}^{(q)}(b;h)}{W^{(q)}(b)}r\int_b^{\infty}e^{-\Phi(q+r)y}W^{(q)}(y)dy-\int_0^{b}e^{-\Phi(q+r)u}h(u)du\\
&\quad+r\int_{b}^{\infty}e^{-\Phi(q+r)y}\rho_b^{(q)}(y;h)dy\Bigg]+\tilde{g}^{(q)}(b;h)+\int_0^bh(u)W^{(q+r)}(b-u)du.\notag
\end{align}
Finally, using \eqref{t_delta_1}, \eqref{t_delta_2} and \eqref{t_delta_3_a} in \eqref{g_b_1} gives 
\begin{align*}
\tilde{g}^{(q)}(b;h)&=W^{(q+r)}(b)\Bigg[-\dfrac{\rho^{(q)}_{b}(b;h)+\tilde{g}^{{(q)}}(b;h)}{W^{(q)}(b)}r\int_b^{\infty}e^{-\Phi(q+r)y}W^{(q)}(y)dy+\int_b^{\infty}e^{-\Phi(q+r)u}h(u)du\notag\\&+r\int_{b}^{\infty}e^{-\Phi(q+r)y}\rho_b^{(q)}(y;h)dy\Bigg]+\tilde{g}^{(q)}(b;h)+\dfrac{r}{\Phi(r+q)}W^{(q+r)}(b)e^{-\Phi(q+r)b}\tilde{g}^{(q)}(b;h).
\end{align*}
Using \eqref{def_z_nuevo} and solving for $\tilde{g}^{(q)}(b;h)$, yields 
\begin{align}\label{r11a}
\tilde{C}^{(q,r)}(b;h)=\tilde{g}^{(q)}(b;h)=\dfrac{\Xi^{(q,r)}(b;h)-\dfrac{\rho^{(q)}_{b}(b;h)}{W^{(q)}(b)} Z^{(q)}(b;\Phi(q+r)) }{\displaystyle\dfrac{ Z^{(q)}(b;\Phi(q+r)) }{W^{(q)}(b)}-\dfrac{r}{\Phi(q+r)}},
\end{align}
where $\Xi(b;h)$ is given in \eqref{e1}.
Thus, by using \eqref{r11a} in \eqref{r10a}, and rearranging terms we obtain \eqref{r12aa}. 

\section{Proof of Lemma \ref{useful_iden_lemma}}\label{useful_iden}

Using itegration by parts, we obtain for $b\geq0$
\begin{align}\label{use_1}
\dfrac{r}{\Phi(r+\theta)} - \dfrac{r}{Z^{(\theta)}(b)} \int_b^{\infty} e^{-\Phi(r+\theta)(z-b)} Z^{(\theta)}(z) \diff z=-\dfrac{\theta}{\Phi(\theta+r)}\dfrac{Z^{(\theta)}(b;\Phi(\theta+r))}{Z^{(\theta)}(b)}.
\end{align}
On the other hand, recall that $\rho^{(\theta)}_{b}(x;w)=\int_0^b w(z)W^{(\theta)}(x-z)\diff z=\int_{x-b}^xw(x-u)W^{(\theta)}(u)\diff u$, then by differentiating the last expression we get
\begin{align*}
\rho^{(\theta)\prime}_{b}(x;w)=w(0)W^{(\theta)}(x)-w(b)W^{(\theta)}(x-b)+\rho^{(\theta)}_{b}(x;w'_{+}), \quad x\geq 0.
\end{align*}
Now, using integration by parts we get
\begin{align}\label{use_2}
&r\int_b^{\infty} e^{-\Phi(r+\theta)(z-b)} \rho^{(\theta)}_{b}(z;w) \diff z\\
&\quad=\dfrac{r}{\Phi(\theta+r)}\rho^{(\theta)}_{b}(b;w)+\dfrac{r}{\Phi(\theta+r)}\int_b^{\infty}e^{-\Phi(r+\theta)(z-b)} \rho^{(\theta)\prime}(z;w) \diff z\notag\\
&\quad=\dfrac{r}{\Phi(\theta+r)}\rho^{(\theta)}_{b}(b;w)+\dfrac{r}{\Phi(\theta+r)}\int_b^{\infty}e^{-\Phi(r+\theta)(z-b)} \rho^{(\theta)}_{b}(z;w'_{+}) \diff z\notag\\
&\qquad +\dfrac{1}{\Phi(\theta+r)}w(0)Z^{(\theta)}(b;\Phi(\theta+r)) -w(b)\dfrac{r}{\Phi(\theta+r)}\int_b^{\infty}e^{-\Phi(\theta+r)(z-b)}W^{(\theta)}(z-b)\diff z\notag\\
&\quad=\dfrac{r}{\Phi(\theta+r)}\rho^{(\theta)}_{b}(b;w)+\dfrac{r}{\Phi(\theta+r)}\int_b^{\infty}e^{-\Phi(r+\theta)(z-b)} \rho^{(\theta)}_{b}(z;w'_{+}) \diff z\notag\\
& \qquad+\dfrac{1}{\Phi(\theta+r)}w(0)Z^{(\theta)}(b;\Phi(\theta+r)) -\dfrac{w(b)}{\Phi(\theta+r)},\notag
\end{align}
where in the last equality we have used the fact that $\int_b^{\infty}e^{-\Phi(\theta+r)(z-b)}W^{(\theta)}(z-b)\diff z=r^{-1}$, which follows from \eqref{scale_function_laplace}. In a similar way, we obtain using integration by parts
\begin{align}\label{use_3}
\int_0^{\infty} e^{-\Phi(r+\theta)z} w(z+b)  \diff z=\dfrac{1}{\Phi(\theta+r)}w(b)+\dfrac{1}{\Phi(\theta+r)}\int_0^{\infty}e^{-\Phi(\theta+r)z}w'_{+}(z+b)\diff z.
\end{align}
Applying \eqref{use_1}, \eqref{use_2}, and \eqref{use_3} in $C^{(\theta,r)}(b;w)$ given in \eqref{e1},  gives  
\begin{align*}
C^{(\theta,r)}(b;w)+\rho^{(\theta)}_{b}(b;w)
&=\dfrac{Z^{(\theta)}(b)}{\theta}\bigg(\dfrac{\Xi^{(\theta,r)}(b,w'_{+})}{Z^{(\theta)}(b;\Phi(\theta+r))}+w(0)\bigg),
\end{align*}
where $\Xi^{(\theta,r)}(b,w'_{+})$ is as in \eqref{e4}.

The previous identity, together with \eqref{eq5}, \eqref{r11a1} and \eqref{fpt_pr}, implies that
\begin{align}\label{eq6}
&\theta W^{(\theta)}(b)\left( \dfrac{C^{(\theta,r)}(b;w) + \rho_b^{(\theta)}(b;w)}{Z^{(\theta)}(b)} \right) - w(0) W^{(\theta)}(b) - \rho^{(\theta)}_{b}(b;w'_{+}) \\
&\quad=\dfrac{W^{(\theta)}(b)}{Z^{(\theta)}(b;\Phi(\theta+r))}\bigg(\Xi^{(\theta,r)}(b;w'_{+})-\dfrac{Z^{(\theta)}(b;\Phi(\theta+r))}{W^{(\theta)}(b)}\rho^{(\theta)}_{b}(b;w'_{+})\bigg)\notag\\
&\quad=\dfrac{W^{(\theta)}(b)\tilde{C}^{(\theta,r)}(b;w'_{+})}{Z^{(\theta)}(b;\Phi(\theta+r))}\bigg(\dfrac{Z^{(\theta)}(b;\Phi(\theta+r))}{W^{(\theta)}(b)}-\dfrac{r}{\Phi(\theta+r)}\bigg)\notag\\
&\quad=\dfrac{\tilde{C}^{(\theta,r)}(b;w'_{+})}{\Phi(\theta+r)}\dfrac{Z^{(\theta)\prime}(b;\Phi(\theta+r))}{Z^{(\theta)}(b;\Phi(\theta+r))}\notag\\
&\quad=\dfrac{\tilde{C}^{(\theta,r)}(b;w'_{+})}{\Phi(\theta+r)}\dfrac{\theta W^{(\theta)}(b)}{Z^{(\theta)}(b)-\E_{b}[e^{-\theta\tau^{-}_{0}(r)}]}.\notag
\end{align}
By Proposition \ref{resol_par_ruin}, we get that $\E_{b}[\int_{0}^{\tau_{0}^{-}(r)}e^{-\theta t}w'_{+}(U_{r}^{b}(t))\diff t]=\tilde{C}^{(\theta,r)}(b;w'_{+})$. From here and \eqref{eq6},  we obtain \eqref{r13}.
\section{Proof of Some Auxiliary Results}\label{sec_proof_DPP}
\begin{lemma}\label{lem_lipschitz}
	For $y \geq x \geq 0$ and $i \in E$, we have
	\begin{equation}\label{app_1}
	0 \leq V(y,i) - V(x,i) \leq \beta(y-x).
	\end{equation}
\end{lemma}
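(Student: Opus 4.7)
The plan is to prove both inequalities by standard coupling arguments between the starting points $x$ and $y$, exploiting the flexibility of the admissible class $\mathcal{A}$ and the linearity of the cost of capital injections.

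For the lower bound $V(y,i) \geq V(x,i)$, I will show that every admissible strategy for the initial state $x$ induces an admissible strategy for the initial state $y$ with identical NPV. Fix $\pi = (L^{\pi}_{r}, R^{\pi}_{r}) \in \mathcal{A}$ for $U^{\pi}_{r}(0-) = x$. Let $\tilde{\pi}$ denote the strategy $(L^{\pi}_{r}, R^{\pi}_{r})$ applied when the uncontrolled surplus is $y + X$ instead of $x + X$. The controlled process satisfies
\[
U^{\tilde{\pi}}_{r}(t) = y + X(t) - L^{\pi}_{r}(t) + R^{\pi}_{r}(t) = U^{\pi}_{r}(t) + (y-x) \geq 0,
\]
so $\tilde{\pi} \in \mathcal{A}$ starting from $y$, and $V_{\tilde{\pi}}(y,i) = V_{\pi}(x,i)$. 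Taking the supremum over $\pi$ gives $V(y,i) \geq V(x,i)$.

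For the upper bound $V(y,i) - V(x,i) \leq \beta(y-x)$, I will reverse the role of the two initial points and exploit the fact that one can ``pre-fund'' the surplus at a cost of $\beta$ per unit. Fix any $\pi = (L^{\pi}_{r}, R^{\pi}_{r}) \in \mathcal{A}$ for the initial state $y$. Define $\tilde{\pi}$ for the initial state $x$ by
\[
L^{\tilde{\pi}}_{r}(t) := L^{\pi}_{r}(t), \qquad R^{\tilde{\pi}}_{r}(t) := (y-x) + R^{\pi}_{r}(t), \qquad t \geq 0,
\]
where the first $y-x$ units are injected as a lump sum at time $0$ (recall that $R^{\pi}_{r}$ may jump at $0$ since it is only required to satisfy $R^{\pi}_{r}(0-) = 0$). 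Then $U^{\tilde{\pi}}_{r}(t) = U^{\pi}_{r}(t) \geq 0$, $\tilde{\pi} \in \mathcal{A}$, and
\[
V_{\tilde{\pi}}(x,i) = \mathbf{E}_{(x,i)}\!\left[\int_{[0,\infty)} e^{-I(t)} \diff L^{\pi}_{r}(t) - \beta \int_{[0,\infty)} e^{-I(t)} \diff R^{\pi}_{r}(t)\right] - \beta(y-x) = V_{\pi}(y,i) - \beta(y-x).
\]
Hence $V(x,i) \geq V_{\pi}(y,i) - \beta(y-x)$, and taking the supremum over $\pi$ yields the desired bound.

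The only subtlety is verifying that the couplings preserve admissibility; this is immediate, since adding a deterministic constant to a non-decreasing, right-continuous, adapted process preserves those properties, the integrability condition \eqref{bailout_admissible} is unaffected by the lump-sum injection of $y-x$, and the Poissonian constraint \eqref{restriction_poisson} on $L^{\pi}_{r}$ is left untouched by both constructions. There is no substantive obstacle beyond this bookkeeping.
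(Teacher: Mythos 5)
Your proof is correct and uses the same coupling idea as the paper (shift the starting point and transfer strategies between the two initial states). The paper phrases it in terms of $\varepsilon$-optimal strategies, while you take suprema at the end, which is equivalent. The one small difference is in the lower bound: you keep $R^{\pi}$ unchanged, yielding $V_{\tilde\pi}(y,i)=V_{\pi}(x,i)$ exactly, whereas the paper uses the strategy $R^{\pi^y}(t):=\bigl(R^{\pi^{(\varepsilon)}}(t)-(y-x)\bigr)\vee 0$, which economizes on capital injections and hence gives $V_{\pi^y}(y,i)\geq V_{\pi^{(\varepsilon)}}(x,i)$. Both produce the inequality $V(y,i)\geq V(x,i)$; your version is arguably simpler, while the paper's quantifies the slack. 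The upper-bound constructions are the same. The only point that deserves a sentence of care in a final write-up is the one you flag implicitly: a strategy admissible for start $x$ should be read as a functional of the increments of the driving noise (equivalently, of $(X-X(0),H,N^r,J)$), so that it can be ``replayed'' from the start $y$; this is the same implicit step the paper takes when it evaluates $V_{\pi^y}(y,i)$ under $\Em_{(x,i)}$.
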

\begin{proof}
	(i) First, we prove that $V(\cdot,i)$ is non-decreasing. Let $\pi^{(\varepsilon)} = (L^{\pi^{(\varepsilon)}}_{r}, R^{\pi^{(\varepsilon)}}_{r})$ be an $\varepsilon$-optimal strategy for $(U^{\pi^{(\varepsilon)}}_{r}(0),H(0)) = (x,i)$. For $y > x$, we define the strategy $\pi^y = (L^{\pi^y}_{r}, R^{\pi^y}_{r})$ as
	\[
	L^{\pi^y}_{r}(t) := L^{\pi^{(\varepsilon)}}_{r}(t), \quad R^{\pi^y}_{r}(t):=\left( R^{\pi^{(\varepsilon)}}_{r}(t) - y + x \right) \vee 0, \quad \text{ for } t\geq 0.
	\]
	It follows that $\pi^y$ is an admissible strategy for $(U^{\pi^y}_{r}(0),H(0)) = (y,i)$. In addition, we have
	\[
	\begin{split}
	V(y,i) - V(x,i) &\geq V_{\pi^y}(y,i) - V_{\pi^{(\varepsilon)}}(x,i) - \varepsilon \\
	& = \Em_{(x,i)} \left[\beta \int_{[0, \infty)} e^{-I(t)}  1_{\lbrace R^{\pi^{(\varepsilon)}}_{r}(t) \leq y-x \rbrace} \diff R^{\pi^{(\varepsilon)}}_{r}(t) \right] - \varepsilon.
	\end{split}
	\]
	By taking the limit as $\varepsilon \rightarrow 0$ we deduce that
	\[
	V(y,i) - V(x,i) \geq 0, \quad 0 \leq x < y.
	\]
	(ii) Now, we prove the upper bound. Let $\pi^{(y,i)} = (L^{\pi^{(y,i)}}_{r}, R^{\pi^{(y,i)}}_{r})$ be an $\varepsilon$-optimal strategy for $(U^{\pi^{(y,i)}}_{r}(0),H(0))=(y,i)$. For $0 \leq x \leq y$, we define the strategy $\pi^{(x,y,i)} = (L^{\pi^{(x,y,i)}}_{r}, R^{\pi^{(x,y,i)}}_{r})$ as
	\[
	L^{\pi^{(x,y,i)}}_{r}(t) := L^{\pi^{(y,i)}}_{r}(t) 1_{\lbrace t>0 \rbrace}, \quad R^{\pi^{(x,y,i)}}_{r}(t):= (y-x)1_{\lbrace t=0 \rbrace} + R^{\pi^{(y,i)}}_{r}(t) 1_{\lbrace t>0 \rbrace}, \quad \text{ for } t\geq 0.
	\]
	Hence,
	\[
	V(x,i) \geq V_{\pi^{(x,y,i)}}(x,i) \geq V_{\pi^{(y,i)}}(y,i) - \beta(y-x) \geq V(y,i) - \beta(y-x) - \varepsilon.
	\]
	Thus, by taking the limit as $\varepsilon \rightarrow 0$ we obtain
	\[
	V(y,i) - V(x,i) \leq \beta(y-x), \quad 0 \leq x \leq y.
	\]
\end{proof}
\begin{lemma}\label{lem_unif_bound} For all $\varepsilon > 0$ and $M > 0$ there exists a strategy $\tilde{\pi}$ such that
	\begin{equation}\label{app_2}
	\max_{i \in E} \sup_{x \in [0,M]} (V(x,i) - V_{\tilde{\pi}}(x,i)) < \varepsilon.
	\end{equation}
\end{lemma}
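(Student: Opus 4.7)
The plan is to combine the $\beta$-Lipschitz continuity of $V(\cdot,i)$ established in Lemma \ref{lem_lipschitz} with an $\varepsilon/2$-optimal patching over a finite grid. The key point is that a strategy may branch on the $\mathcal{F}_0$-measurable observation of $(X(0),H(0))$, so the resulting object is still a single admissible $\mathbf{F}$-adapted strategy.

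First I would fix $\delta = \varepsilon/(4\beta)$ and choose a partition $0 = x_0 < x_1 < \cdots < x_n = M$ with $\max_{k} (x_{k+1}-x_k) < \delta$. Since $E$ is finite, the collection $\{(x_{k+1},i): 0 \leq k \leq n-1,\, i \in E\}$ is finite, and for each element I would fix an $(\varepsilon/2)$-optimal strategy $\pi^{(k,i)}$ with
\[
V_{\pi^{(k,i)}}(x_{k+1},i) \geq V(x_{k+1},i) - \varepsilon/2.
\]
I would then define $\tilde{\pi}$ as follows: observing $X(0) = x \in [x_k,x_{k+1}]$ and $H(0) = i$ at time zero, inject capital of amount $x_{k+1}-x$ at $t=0$ (so that $U_r^{\tilde{\pi}}(0) = x_{k+1}$), and thereafter follow $\pi^{(k,i)}$. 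This is a single $\mathbf{F}$-adapted strategy because $(X(0),H(0))$ is $\mathcal{F}_0$-measurable; admissibility is preserved since the initial injection is bounded by $M$ and the post-zero behaviour is admissible by construction of $\pi^{(k,i)}$.

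For the verification, the construction gives
\[
V_{\tilde{\pi}}(x,i) = V_{\pi^{(k,i)}}(x_{k+1},i) - \beta(x_{k+1}-x) \geq V(x_{k+1},i) - \varepsilon/2 - \beta(x_{k+1}-x),
\]
which combined with the upper Lipschitz bound $V(x,i) \leq V(x_{k+1},i) + \beta(x_{k+1}-x)$ from Lemma \ref{lem_lipschitz} yields
\[
V(x,i) - V_{\tilde{\pi}}(x,i) \leq 2\beta(x_{k+1}-x) + \varepsilon/2 < 2\beta\delta + \varepsilon/2 = \varepsilon,
\]
uniformly in $(x,i) \in [0,M] \times E$.

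The only delicate ingredient is the formalization of $\tilde{\pi}$ as a single admissible $\mathbf{F}$-adapted strategy whose time-$0$ action depends on the observed initial condition; once one notes that $(X(0),H(0))$ is $\mathcal{F}_0$-measurable this is routine, and the remainder of the argument just quantifies the Lipschitz estimate of Lemma \ref{lem_lipschitz} on a sufficiently fine grid together with the finiteness of $E$.
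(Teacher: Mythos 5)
Your proposal is correct and takes essentially the same approach as the paper's proof: discretize $[0,M]$ on a finite grid, pick $\varepsilon$-optimal strategies at the grid points $(x_{k+1},i)$, patch with an initial lump-sum injection pushing the state up to the nearest grid point, and control the error via the $\beta$-Lipschitz bound of Lemma \ref{lem_lipschitz}. The only cosmetic difference is that you track constants carefully to land exactly below $\varepsilon$, whereas the paper's proof as written lands at $(2+\beta)\varepsilon$ and implicitly relies on $\varepsilon$ being arbitrary.
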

\begin{proof}
	Let $x_j := \dfrac{Mj}{N}$, with $j= 0,1,\cdots,N$, be a partition of $[0,M]$, where $N > M \varepsilon^{-1}$ and such that
	\begin{equation}
	\max_{i \in E} \sup_{\substack{x,y \in [0,M] \\ |x-y| < M/N}} |V(x,i) - V(y,i)| < \varepsilon.
	\end{equation}
	For all $i\in E$ and $j \in \lbrace 0,1,\cdots,N \rbrace$, let $\pi^{i,j} = (L^{i,j}_{r},R^{i,j}_{r})$ be an $\varepsilon$-optimal strategy for $(U^{\pi^{i,j}}_{r}(0),H(0)) = (x_j,i)$. For $x \in [0,M]$, we define the strategy $\tilde{\pi} = (L^{\tilde{\pi}}_{r},R^{\tilde{\pi}}_{r})$ such that $U^{\tilde{\pi}}_{r}(0) = x$ and $H(0) = i$, and for $t \geq 0$
	\[
	L^{\tilde{\pi}}_{r}(t) = L^{\pi^{i,j^*}}_{r}(t), \quad R^{\tilde{\pi}}_{r}(t) = \beta(x - x_{j^*}) 1_{\lbrace t =0 \rbrace} + R^{\pi^{i,j^*}}_{r}(t) 1_{\lbrace t> 0 \rbrace},
	\]
	where $j^*:= \min\lbrace j  : x \leq x_j \rbrace$. It follows that $|V_{\pi^{i,j^*}}(x_{j^*},i) - V_{\tilde{\pi}}(x,i)| \leq \beta(x - x_{j^*}) \leq \beta\varepsilon $.
	
	Therefore, we obtain
	\[
	\begin{split}
	| V(x,i) - V_{\tilde{\pi}}(x,i) | & \leq |V(x,i) - V(x_{j^*},i)| \\
	& \quad + |V(x_{j^*},i) - V_{\pi^{i,j^*}}(x_{j^*},i)| + |V_{\pi^{i,j^*}}(x_{j^*},i) - V_{\tilde{\pi}}(x,i)| \\
	& \leq (2 + \beta) \varepsilon \leq (2 + \beta) \varepsilon.
	\end{split}
	\]
	As the previous inequality holds for arbitrary $x \geq 0$ and $i \in E$, then \eqref{app_2} holds true.
\end{proof}
\subsection{Proof of Proposition \ref{dpp}}\label{App_DPP_Proof}
Let $x \geq 0$ and $i \in E$. Using the strong Markov property we obtain that
\[
\sup_{\pi \in \A} \Em_{(x,i)}\left[ \int_{[0, \zeta)} e^{-I(t)} \diff L^{\pi}_{ {r}}(t) - \beta \int_{[0, \zeta)}  e^{-I(t)} \diff R^{\pi}_{ {r}}(t) + e^{-I(\zeta)} V(U^{\pi}_{ {r}}(\zeta),H(\zeta)) \right] \geq V(x,i).
\]
To prove the opposite inequality, let $\pi = (L^{\pi},R^{\pi})$ be an admissible strategy and $\varepsilon > 0$. By Lemma \ref{lem_unif_bound}, for all $k \in \mathbb{Z}\cap [0,\infty)$ there exists a strategy $\pi^{k,\varepsilon}$ such that
\[
\max_{i\in E} \sup_{x \in [kM, (k+1)M]} |V(x,i) - V_{\pi^{k,\varepsilon}}(x,i)| < \varepsilon.
\]
Let us denote by $\theta$ the shift operator. Recall that $\zeta$ is the epoch of the first regime switch, then we can define a strategy $\pi^{\varepsilon} = (L^{\pi^{\varepsilon}}, R^{\pi^{\varepsilon}})$ as follows:
\begin{align*}
L^{\pi^{\varepsilon}}_{ {r}}(t) & := L^{\pi}_{ {r}}(t) 1_{\lbrace t < \zeta \rbrace} + \sum_{k=0}^{\infty} \left( L^{\pi^{k,\varepsilon}}_{ {r}}(t - \zeta) \circ \theta_{\zeta} \right) 1_{\lbrace t \geq \zeta \rbrace} 1_{\lbrace U^{\pi}_{ {r}}(\zeta) \in [k,k+1) \rbrace}, \\
R^{\pi^{\varepsilon}}_{ {r}}(t) & := R^{\pi}_{ {r}}(t) 1_{\lbrace t < \zeta \rbrace} + \sum_{k=0}^{\infty} \left( R^{\pi^{k,\varepsilon}}_{ {r}}(t - \zeta) \circ \theta_{\zeta} \right) 1_{\lbrace t \geq \zeta \rbrace} 1_{\lbrace U^{\pi}_{ {r}}(\zeta) \in [k,k+1) \rbrace}.
\end{align*}
Then we have
\begin{align*}
\Em_{(x,i)} & \left[ \int_{[0, \zeta)} e^{-I(t)} \diff L^{\pi}_{ {r}}(t) - \beta \int_{[0, \zeta)} e^{-I(t)} \diff R^{\pi}_{ {r}}(t) + e^{-I(\zeta)} V(U^{\pi}_{ {r}}(\zeta),H(\zeta)) \right] \\
& \leq \Em_{(x,i)} \left[ \int_{[0, \zeta)} e^{-I(t)} \diff L^{\pi}_{ {r}}(t) - \beta \int_{[0, \zeta)} e^{-I(t)} \diff R^{\pi}_{ {r}}(t) \right. \\
& \qquad \left. + e^{-I(\zeta)} \sum_{k=0}^{\infty} V_{\pi^{k,\varepsilon}}(U^{\pi}_{ {r}}(\zeta)H(\zeta)) 1_{\lbrace U^{\pi}_{ {r}}(\zeta) \in [k,k+1) \rbrace} + \varepsilon \right] \\
& = V_{\pi^{\varepsilon}}(x,i) + \varepsilon \leq V(x,i) + \varepsilon.
\end{align*}
As $\varepsilon > 0$ is arbitrary, the proof is complete.
\section{Proof of Lemma \ref{proof_lemma_vv}}\label{proof_lemma_v}
{Let us consider the strategy $\pi^\prime \in\A$ given by:}
\begin{align*}
{L}^{\pi^\prime}(t):=&
\begin{cases}
0, \qquad &t\in [0, T(1)) ,\\
{x\vee0}+X(T(1 ))-\inf_{s\in[0, T(1)]}X(s) , \qquad &t\in [T(1), T(2)),\\
{L}^{\pi^\prime}(T(k-1))+X(T(k))-\inf_{s\in[T(k-1), T(k)]}X(s)  , \qquad &t\in [T(k), T(k+1)), \quad k\in\N, k\geq 2. 
\end{cases}
\\
{R}^{\pi^\prime}(t):=&
\begin{cases}
-\inf_{s\in[0, t]}(X(s) - x) -x\land 0, \qquad &t\in [0, T(1)] ,\\
{R}^{\pi^\prime}(T(k))-\inf_{s\in[T(k), t]}(X(s) - X(T(k)) ), \qquad &t\in (T(k), T(k+1)],\quad k\in\N. 
\end{cases}
\end{align*}
Using the strategy $\pi'$ we now define
\begin{align*}
V_+(x, i):=&\Em_{(x,i)} \left[\int_{[0,\infty)}e^{-I(t)} \diff {L}^{\pi^\prime}(t)  \right], \\
V_-(x, i):=&\Em_{(x,i)} \left[ \int_{[0,\infty)}e^{-I(t)} \diff{L}^{\pi^\prime}(t)-\beta \int_{[0,\infty)}e^{-I(t)} \diff {R}^{\pi^\prime}(t)\right],\qquad (x, i)\in[0, \infty)\times E. 
\end{align*}
{Using the spatial homogeneity of L\'evy processes, w}e can rewrite $V_+$ and $V_-$, as follows
\begin{align*}
V_+(x, i)
=&x\Em_{(0,i)} \left[e^{-I(T(1))} \right]
+ \Em_{(0, i) }\left[\int_{[0, \infty)}e^{-I(t)}\diff {L}^{\pi^\prime}(t) \right],
 \\
V_-(x, i)=&V_+(x, i)-\beta \Em_{(0, i)} \left[\int_{[0, \infty)}e^{-I(t)}\diff {R}^{\pi^\prime}(t) \right],\qquad (x, i)\in[0, \infty)\times E. 
\end{align*}
{By Proposition \ref{gammainD} we have $V_-, V_+ \in \mathcal{D}$.} 
In addition, we note that 
\begin{align*}
\|V_- - V_+\|_{\infty} &\leq 
\beta \Em_{(0, i)} \left[\int_{[0, \infty)}e^{-I(t)}\diff {R}^{\pi^\prime}(t) \right]<\infty. 
\end{align*}
Since $\pi^\prime\in\A$, we have 
\begin{align*}
V_-(x, i)\leq V(x, i),\qquad (x,i)\in [0,\infty) \times E. 
\end{align*}
{Now, fix $\pi\in \A$}. For $(x,i)\in [0,\infty) \times E$, we have 
\begin{align}\label{4}
V_+(x, i)-V_\pi(x, i)
&=\Em_{(x, i)}\left[\int_{[0,\infty)}e^{-I(t)}\diff {L}^{\pi^\prime}(t)-\int_{[0,\infty)}e^{-I(t)}\diff L^\pi(t)+\beta \int_{[0,\infty)}e^{-I(t)}\diff R^\pi(t)\right]\\
&=\Em_{(x, i)}\left[\int_0^\infty q(H(u))e^{-I(u)}\left({L}^{\pi^\prime}(u) -L^\pi(u)+\beta  R^\pi(u)\right)  \diff u\right].\notag
\end{align}
Note that 
\begin{align}
{L}^{\pi^\prime}(0) -L^\pi(0)+  R^\pi(0)=R^\pi(0)\geq 0.\label{1}
\end{align}
For $n\in\N$, we have 
\begin{align*}
{L}^{\pi^\prime}(T(n))=&x+X(T(1))-\inf_{s\in[0, T(1)]}X(s) 
+\sum_{k=2}^n (X(T(k))-\inf_{s\in[T(k-1), T(k)]}X(s)) \\
\geq &X(T(1))+\sum_{k=2}^n (X(T(k))- X(T(k-1)))=X(T(n)). 
\end{align*}

Thus, by the definition of $R^\pi$, we have for $n\in\N$, 
\begin{align}
{L}^{\pi^\prime}(T(n)) -L^\pi(T(n))+  R^\pi(T(n))\geq X(T(n)) -L^\pi(T(n))+  R^\pi(T(n))\geq 0.
\label{2}
\end{align}
From \eqref{1} and \eqref{2}, and since $R^\pi$ is non-decreasing, we obtain for $n\in\N\cup\{0\}$ that
\begin{align}
L^{\pi^\prime}(t) -L^\pi(t)+  R^\pi(t)\geq X(T(n)) -L^\pi(T(n))+  R^\pi(t)\geq 0,\quad t\in[T(n), T(n+1)), \label{3}
\end{align}
where $T(0)=0$, {and the last inequality follows from the fact that $\pi\in\mathcal{A}$}.

{Finally, \eqref{4} and \eqref{3} imply that} $V_+(x, i)-V_\pi(x, i)\geq 0$ for $(x,i)\in [0,\infty) \times E$, hence
\begin{align*}
V(x,i)=\sup_{\pi\in\mathcal{A}}V_{\pi}(x, i)\leq V_+(x, i),\qquad (x,i)\in [0,\infty) \times E. 
\end{align*}
\section{Proof of Lemma \ref{inf_norm_finite}}\label{proof_inf_norm_finite}
From the proof of Lemma \ref{proof_lemma_vv}, we have
\begin{align*}
	V(x, i)\leq x\Em_{(0,i)} \left[e^{-I(T(1))} \right]
	+ \Em_{(0, i) }\left[\int_{[0, \infty)}e^{-I(t)}\diff {L}^{\pi^\prime}(t) \right], \quad (x,i)\in [0,\infty) \times E.
\end{align*}
On the other hand, we have 
\begin{align*}
	V_{\pi^{0, \bb}}(x, i)= &\Em_{(x,i)} \left[e^{-I(T(1))}(U^{0, \bb}_r(H(T(1)-))-\bb (H(T(1)))\lor 0) \right]
	\\
	&+\Em_{(x, i) }\left[\int_{(T(1), \infty)}e^{-I(t)}\diff {L}^{{0, \bb}}_r(t) \right]-\beta \sum_{k\in\N}\Em_{(x, i) }\left[\int_{[T(k-1), T(k))}e^{-I(t)}\diff {R}^{{0, \bb}}_r(t) \right]\\
	\geq &\Em_{(x,i)} \left[e^{-I(T(1))}(U^{0, \bb}_r(T(1)-)-\bb (H(T(1)))) \right]
	\end{align*}
	\begin{align*}
	&-\beta\sum_{k\in\N}{\left( \Em_{(0,i)} \left[e^{-I(T(1))}\right]\right)}^{k-1}\sup_{i\in E}\Em_{(0, i) }\left[\int_{[0, T(1))}e^{-I(t)}\diff (-\inf_{s\in[0, t]}X(s)) \right]\\
	=&\Em_{(x,i)} \left[e^{-I(T(1))}(X(T(1))+R^{0, \bb}_r(T(1)-)-\bb(H(T(1)))) \right]
	\\
	&-\beta\sum_{k\in\N}{\left( \Em_{(0,i)} \left[e^{-I(T(1))}\right]\right)}^{k-1}\sup_{i\in E}\Em_{(0, i) }\left[\int_{[0, T(1))}e^{-I(t)}\diff (-\inf_{s\in[0, t]}X(s)) \right]\\
	\geq &\Em_{(x,i)} \left[e^{-I(T(1))}(X(T(1))\right]-\sup_{i\in E} \bb (i)
	\\
	&-\beta\sum_{k\in\N}{\left( \Em_{(0,i)} \left[e^{-I(T(1))}\right]\right)}^{k-1}\sup_{i\in E}\Em_{(0, i) }\left[\int_{[0, T(1))}e^{-I(t)}\diff (-\inf_{s\in[0, t]}X(s)) \right]\\
	=&x\Em_{(0,i)} \left[e^{-I(T(1))}\right]+\Em_{(0,i)} \left[e^{-I(T(1))}(X(T(1))\right]-\sup_{i\in E} \bb (i)
	\\
	&-\beta\sum_{k\in\N}{\left( \Em_{(0,i)} \left[e^{-I(T(1))}\right]\right)}^{k-1}\sup_{i\in E}\Em_{(0, i) }\left[\int_{[0, T(1))}e^{-I(t)}\diff (-\inf_{s\in[0, t]}X(s)) \right]. 
\end{align*}
Thus, we have
\begin{align*}
	\| V-V_{\pi^{0,\bb}} \|_{\infty}\leq&
	\Em_{(0, i) }\left[\int_{[0, \infty)}e^{-I(t)}\diff {L}^{\pi^\prime}(t) \right]
	+\left|\Em_{(0,i)} \left[e^{-I(T(1))}(X(T(1))\right]\right|+\sup_{i\in E} \bb (i)
	\\
	&+\beta\sum_{k\in\N}{\left( \Em_{(0,i)} \left[e^{-I(T(1))}\right]\right)}^{k-1}\sup_{i\in E}\Em_{(0, i) }\left[\int_{[0, T(1))}e^{-I(t)}\diff (-\inf_{s\in[0, t]}X(s)) \right]<\infty.
\end{align*}

\end{document}